\documentclass[11pt]{article}
\usepackage{amssymb}
\usepackage{amsmath,amsthm}
\usepackage{float}
\usepackage{graphicx}
\usepackage{subcaption}

\newtheorem{theorem}{Theorem}[section]
\newtheorem{lemma}[theorem]{Lemma}

\newtheorem{definition}[theorem]{Definition}
\newtheorem{remark}[theorem]{Remark} 
\newtheorem{example}[theorem]{Example}

\begin{document}

\title{\bf Classification of Dupin Cyclidic Cubes by Their Singularities}

\author{
Jean Michel Menjanahary$^a$,
Eriola Hoxhaj$^b$,
Rimvydas Krasauskas$^a$ \\ \\
\em $^a$Institute of Computer Science, Vilnius University, Lithuania\\
\em $^b$Johannes Kepler University, Linz, Austria
}

\date{\empty}
\maketitle

\begin{abstract}
Triple orthogonal coordinate systems having coordinate lines as circles or straight lines are considered. 
Technically, they are represented by trilinear rational quaternionic maps and are called Dupin cyclidic cubes, naturally generalizing the bilinear rational quaternionic parametrizations of principal patches of Dupin cyclides.
Dupin cyclidic cubes and their singularities are studied and classified up to 
M\"obius equivalency in Euclidean space.
\end{abstract}

{\em Keywords:} {\small Dupin cyclide, Dupin cyclidic cube,  bicircular quartics}

\def\R{\mathbb R}       
\def\C{\mathbb C}       
\def\D{{\mathbb D}}
\def\H{\mathbb H}       
\def\Sp{\mathbb{S}}              
\def\P{\mathbb{P}}       
\def\Ss{S}              
\def\S{\mathcal{S}} 

\def\re{\mathop{\mathrm{Re}}}
\def\im{\mathop{\mathrm{Im}}}
\def\rank{\mathop{\mathrm{rank}}}
\def\inv{\mathop{\mathrm{Inv}}}
\def\Qdr{{\cal Q}}
\newcommand{\sing}{\mathop{\mathrm{Sing}}}


\def\ic{\mathrm{i}} 
\def\ii{\mathbf{i}}
\def\jj{\mathbf{j}}
\def\kk{\mathbf{k}}

\def\B{B\'ezier }
\def\M{M\"obius }

\section{Introduction}

Consider a triply orthogonal coordinate system in $\R^3$ where all coordinate lines are either circles or straight lines. The general coordinate surfaces of such a system will be covered by two families of circles (or straight lines) that are curvature lines. This is the characteristic property of Dupin cyclides.  
Therefore, we call such coordinate systems \emph{Dupin cyclidic (DC) systems}. 

In modeling applications, the focus is mainly on principal patches of Dupin cyclides, i.e., quadrangular patches bounded by four curvature lines, which are circles meeting orthogonally at the corners. 
A natural generalization of the latter surface patch to a volume object is the definition of a Dupin cyclidic cube as a volume cut out of $\R^3$ by six principal DC patches meeting orthogonally.
For various potential applications of DC cubes, it is essential to avoid singularities. 
This was the primary motivation for the current research.    

Though DC systems are very particular cases of triply orthogonal coordinate systems studied 
intensively from the 19th century to all our knowledge, the full \M classification presented in this paper was not done.
We find 4 types, where two types (A) and (B) seem to be earlier unknown. Their \M classes are characterized by symmetry properties and their singular sets, which are certain arrangements of quartic curves in space.   

Recently, DC systems were considered in the context of Lie sphere geometry in \cite{Szewieczek2022}.
Our approach is mostly close to \cite{BobenkoHuhnen2012}, where a DC cube is defined as an
elementary hexahedron of 3D cyclidic nets. 

The paper is structured as follows. 
Section~\ref{sec:preliminaries} contains preliminaries about quaternions, Study quadric, and quaternionic--\B formulas parametrizing Dupin cyclides. 
DC systems are defined, and the main classification problem is formulated 
in Section~\ref{sec:DC-systems}. 
Section~\ref{sec:spherical} deals with the simplest cases when at least one family of surfaces 
is consisting of spheres or planes.   
Offsets of Dupin cyclides are considered in Section~\ref{sec:offsets}. 
Section~\ref{sec:3P} is devoted to DC systems with three symmetry planes.
In Section~\ref{sec:General}, the case with two symmetry planes and one imaginary symmetry sphere is analyzed, and the main classification results formulated in Theorems~\ref{th:main} and \ref{th:main2} are proved. 
%
The notion of the degree of DC systems is considered in Section~\ref{sec:degrees}.
Conclusions and further research directions are outlined in Section~\ref{sec:conclusions}.

\section{Preliminaries}
\label{sec:preliminaries}
\subsection{Quaternions and \M transformations in $\R^3$}

We will use the algebra of quaternions $\H$ in the standard basis $\{1,\ii,\jj,\kk\}$, where $\ii^2\!=\!\jj^2\!=\!\kk^2\!=\!\ii\jj\kk\!=\!-1$.
For a quaternion $q = r + x \ii + y \jj + z \kk$, 
define the real part $\re(q)=r$, the imaginary part $\im(q)= q-\re(q)$,
the conjugate $\Bar{q}=\re(q)-\im(q)$, and the norm $|q|=\sqrt{q\Bar{q}}$. If $q\neq0$, we have $q^{-1}=\Bar{q}/|q|^2$.
The Euclidean space $\R^3$ is naturally identified with the space of imaginary quaternions $\im\!\H=\{q\in \H \mid \re(q)=0\}$.

An inversion $\inv_q^r$ with respect to a sphere of center $q\in \im\!\H$ and radius $r$ can be written explicitly as $\inv_q^{r}(p)=q-r^2(p-q)^{-1}$ for all $p\in \im\!\H$.
The group generated by inversion transformations is called the group of \M transformations in $\R^3$. To be more precise, \M transformations are defined on the extended space 
$\widehat{\R}^3=\R^3\cup \{\infty\}$, which is identified with $\im\!\widehat{\H}=\im\!\H \cup \{\infty\}$. For example, the inversion $\inv_0^1$ maps the origin $0$ to $\infty$ and vice versa. 
 Alternatively, \M transformations can be defined by three kinds of generators: translations $p \mapsto p+a$, $a \in \im\!\H$; homotheties $p \mapsto \lambda p$, $\lambda \in \R^*$; and the unit inversion $\inv_0^1:\; p \mapsto  -p^{-1}=p/|p|^2$.

Since circles and straight lines are \M equivalent, 
they both will be called M-circles throughout this paper. Similarly, both spheres and planes will be called M-spheres.

\subsection{The Study quadric and quaternionic--\B formulas}

Define the quadratic form $\S$ in $\R^8$, which is identified with $\H^2$, by
\begin{equation}\label{Study}
\S(u,w) = \frac{u \bar w + w \bar u}{2}, \quad (u,w)\in \H^2.    
\end{equation}
Let $\R P^7$ be the real projectivization of $\H^2\cong \R^8$. The quadric in $\R P^7$ defined by $\S(u,w) = 0$ is called the \emph{Study quadric}. 
Actually, the Study quadric is the preimage of $\im\!\widehat{\H}$ under the \emph{projective division}
\[
\pi: \R P^7 \to\im\!\widehat{\H}, \ 
\pi(u,w) = \begin{cases}
            u w^{-1} & w \ne 0,\\
            \infty   & w = 0.
           \end{cases}
\]

Quaternionic \B (QB) formulas are defined by fractions of quaternionic polynomials
(expressed in \B form) $F = U W^{-1}$, such that the image is completely contained in $\im\!\widehat{\H}$.
First, one can define the 
B\'ezier formulas
on the Study quadric with \emph{homogeneous control points}
\[
\begin{pmatrix} U \\ W \end{pmatrix}
= \sum_i 
\begin{pmatrix} u_i \\ w_i \end{pmatrix} B_i,\ (u_i, w_i) \in \H^2\setminus \{(0,0)\}, 
\]
where $\{B_i\}$ is a certain polynomial Bernstein basis, and then apply the projective 
division $\pi(U, W)=UW^{-1}$.
If the quaternionic coefficients $w_i$, which are called \emph{weights}, are non-zero
then the usual formula
with control points 
$p_i = u_i w_i^{-1} \in \im\!\H$ is obtained
\begin{equation}
\label{proj-div}
U W^{-1} = 
\left(\textstyle{\sum}_i p_i w_i B_i\right)
\left(\textstyle{\sum}_i w_i B_i\right)^{-1} \subset \im\!\widehat{\H}.   
\end{equation}
 This representation can be found in \cite[Section~2.3]{KrasZube2014}.

 \begin{remark}
 Note that QB formulas are preserved by \M transformations. In particular, the inversion $\inv_q^r$ maps a QB formula with homogeneous control points $(u_i,w_i)$ to a QB formula with homogeneous control points $(u_i',w_i')$ such that
 \begin{equation}
u_i' =qu_i-(r^2+q^2)w_i, \quad w_i'=u_i-qw_i.
 \end{equation}
 \end{remark}

\subsection{Dupin cyclides: parametrizations and implicit equations}
\label{subsec:DuC}

Dupin cyclides are surfaces characterized by the property that their curvature lines 
are M-circles. We call them principal circles to distinguish them from
other M-circles on the same surface. Rational parameterizations of quadrangular Dupin cyclide patches bounded by four principal circles are most important for applications.
These are principal Dupin cyclide patches, which can be parametrized by bilinear QB formulas, where its corner points are exactly control points. 
The details are presented in \cite{ZubeKras2015}.
In particular, all four control points are on an M-circle, see, e.g., \cite{BobenkoSuris2008}. 

\begin{remark}
\label{rem:cross-ratio}
The condition of four points $p_0,p_1,p_2,p_3\in \im\!\H$ being on an M-circle is equivalent to the cross-ratio
 \[cr(p_0,p_1,p_2,p_3)=(p_0-p_1)(p_1-p_2)^{-1}(p_2-p_3)(p_3-p_0)^{-1},\]
being real; see Lemma $2.3$ in $\cite{ZubeKras2015}$.  
\end{remark}


\begin{definition}
\label{def:Farin-point}
A \emph{Farin point} of a quaternionic circular arc 
\begin{equation}\label{eq:circle}
C: [0,1] \to \R^3, \quad C(t) = (p_0 w_0 (1-t) + p_1 w_1 t) (w_0 (1-t) + w_1 t)^{-1}
\end{equation}
is point $f = C(1/2)$ in the interior of the arc $C([0,1])$
with endpoints $p_0$ and $p_1$ that controls its rational parametrization.
The Farin point $f$ can be moved to any other interior point of the arc by changing $w_1$ to $\lambda w_1$, $\lambda \in \R$, $\lambda > 0$.
\end{definition}

\begin{definition}
\label{def:DC-patch}
A \emph{DC patch} is a rational map defined by the bilinear QB formula
\begin{equation}\label{DC-patch}
P: (\R P^1)^2 \to \im\!\widehat{\H}\cong \widehat{\R}^3, \ P = U W^{-1},\  U, W \in \H[s,t],
\end{equation}
with orthogonality condition $\partial_s P \perp \partial_t P$. 
\end{definition}

\begin{remark}
\label{rem:Farin-patch}
For a DC patch $P(s,t)$ with control points $p_0,p_1,p_2,p_3\in \im\!\H$ 
Farin points on opposite arcs, e.g., $f_{01} = P(1/2,0)$ and 
$f_{23} = P(1/2,1)$ are related, since they define a sub-patch with control
points  $p_0$, $f_{01}$, $p_2$, $f_{23}$. In particular, they are concyclic.
\end{remark}

\begin{theorem}\label{th:implicit}
The implicit equation of a surface parametrized by bilinear DC patch with homogeneous control points $(u_i,w_i)$ is a factor of the $4 \times 4$ determinant 
\begin{equation}\label{implicit}
f(x,y,z) = \det([X w_i - u_i], i = 0,\ldots,3), \ X = x \ii + y \jj + z \kk,
\end{equation}
where $[q]$ denotes the coordinate column of the quaternion $q$.
The unique exception $f(x,y,z) \equiv 0$ happens only when the DC patch is on an M-sphere and all coordinate M-circles intersect at one point.
\end{theorem}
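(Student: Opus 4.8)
The plan is to read the determinant $f$ as the solvability condition for the linear system $X W(s,t) - U(s,t) = 0$. Using $u_i = p_i w_i$ one writes each column as $X w_i - u_i = (X - p_i) w_i$, and collecting Bernstein coefficients gives $\sum_i (X w_i - u_i)\, B_i(s,t) = 0$ for any surface point $X = P(s,t)$, with coefficient vector $(B_i(s,t)) \neq 0$ since $\sum_i B_i \equiv 1$. Hence the four coordinate columns $[X w_i - u_i]$ are $\R$-linearly dependent, so $f(X) = 0$. Thus the patch, and therefore its Zariski closure, lies in $\{f = 0\}$, and whenever $f \not\equiv 0$ the minimal (implicit) polynomial of the surface divides $f$. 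This settles the divisibility part; the whole content of the theorem is then the description of when $f \equiv 0$.

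First I would pin down the top-degree behaviour. Let $L_X$ be the matrix of left multiplication $q \mapsto X q$; it is skew-symmetric and satisfies $L_X^2 = -|X|^2 I$, so $\det L_X = |X|^4 = (x^2+y^2+z^2)^2$. Writing $M(X) = L_X W - U$ with the real matrices $W = ([w_0],\ldots,[w_3])$ and $U = ([u_0],\ldots,[u_3])$, multilinearity of the determinant shows the degree-$4$ part of $f$ equals $\det(L_X W) = |X|^4 \det W$. Consequently, if $\det W \neq 0$ then $f$ is a genuine quartic and $f \not\equiv 0$; in particular $f \equiv 0$ forces the weights $w_i$ to be $\R$-linearly dependent, which is the first necessary condition and already excludes the generic Dupin cyclide case.

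For the exceptional locus I would reformulate $f$ intrinsically. Let $\Phi_X : \H^2 \to \H$, $(u,w) \mapsto X w - u$, so that $f(X) = \det[\Phi_X(P_0),\ldots,\Phi_X(P_3)]$ with control points $P_i = (u_i,w_i)$ lying on the Study quadric $\{\S = 0\}$. Then $f(X) = 0$ iff some $\sum_i \lambda_i P_i$ (with $\lambda \neq 0$) lies in $\ker \Phi_X$, and $\ker \Phi_X = \{(Xw,w) : w \in \H\}$ is exactly the $\pi$-fibre $\pi^{-1}(X)$. Hence $\{f = 0\} \cap \im\!\widehat{\H} = \pi\big(\P(\Pi) \cap \{\S = 0\}\big)$, where $\Pi = \mathrm{span}(P_0,\ldots,P_3)$, and $f \equiv 0$ iff this two-parameter image fills all of $\im\!\widehat{\H}$; a clean sufficient degeneracy is $\dim \Pi \le 3$, for then a fixed relation $\sum_i \lambda_i P_i = 0$ pushes forward to a relation among the images $\Phi_X(P_i)$ for every $X$. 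A key point is that this whole description is \M-invariant: by the Remark, a \M transformation acts on $\H^2 \cong \R^8$ by a Study-form-preserving linear map, carrying $\Pi$ to the span of the transformed control points.

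Finally I would translate the degeneracy into geometry. For the implication $(\Leftarrow)$: if the patch lies on an M-sphere and all coordinate M-circles pass through a common point $p^*$, I apply the inversion sending $p^* \to \infty$ and the M-sphere to a plane; then all coordinate circles become straight lines concurrent at infinity, so the patch is an affine planar rectangle with real weights, every column $\Phi_X(P_i) = X - p_i$ is imaginary, the real row of $M(X)$ vanishes identically, and $f \equiv 0$; \M-invariance returns the conclusion for the original patch. For $(\Rightarrow)$: assuming $f \equiv 0$, the weights are dependent by the second paragraph and the image $\pi(\P(\Pi) \cap \{\S = 0\})$ is space-filling by the third, and I would argue this forces the parametrization to possess a base point $p^*$ lying on every coordinate circle, together with flatness (containment in an M-sphere). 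The \emph{main obstacle} is precisely this last implication: one must exclude the borderline configurations with $\det W = 0$ but $f \not\equiv 0$ — an M-sphere carrying two orthogonal families of circles that are not all concurrent, such as a meridian/parallel net — and show that $f \equiv 0$ genuinely forces both a common base point and flatness. I expect to dispose of these borderline cases using the normal-form reduction of DC patches together with the fibre description of $\{f = 0\}$ above.
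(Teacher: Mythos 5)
Your proposal is correct and complete on the easy half of the statement: the containment $P((\R P^1)^2)\subset\{f=0\}$ via $\sum_i(Xw_i-u_i)B_i(s,t)=XW-U=0$, hence divisibility when $f\not\equiv 0$; the computation of the leading form $\det(L_XW)=(x^2+y^2+z^2)^2\det W$; and the sufficiency of the exceptional configuration (after inverting the common point to $\infty$ the patch becomes a planar rectangular net with real weights, so the real row of every column $[Xw_i-u_i]$ vanishes and $f\equiv 0$, and the condition $f\equiv0$ is \M invariant because inversion multiplies every column on the left by the same quaternion $r^2(X-q)^{-1}$). These steps are sound. For comparison, the paper does not reprove any of this: it cites \cite[Theorem~4.5]{KrasZube2020}, which already identifies the exceptional locus as patches coming from B-planes (maximal isotropic subspaces of one ruling) of the Study quadric, and only adds the short observation that a B-plane DC patch, being \M equivalent to a real-weight bilinear patch with orthogonal coordinate lines, must be a planar rectangular net.

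The genuine gap is the ``unique exception'' direction, which is the actual content of the theorem's second sentence and which you explicitly label ``the main obstacle'' and defer. Concretely: you establish that $f\equiv 0$ forces $\det W=0$ and forces the $4$-dimensional span $\Pi=\mathrm{span}(P_0,\dots,P_3)$ to meet every fibre $\pi^{-1}(X)=\ker\Phi_X$ nontrivially, but you never classify which subspaces $\Pi$ have this property. That classification is precisely the missing lemma: one must show that a $4$-dimensional $\Pi$ intersecting every fibre (the fibres being the maximal isotropic subspaces of one ruling of the signature-$(4,4)$ form $\S$) is either of dimension $\le 3$ (degenerate patch) or is itself a maximal isotropic subspace of the opposite ruling, i.e.\ a B-plane, and then that the orthogonality condition of a DC patch reduces a B-plane patch to the planar rectangular case. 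Without this, borderline configurations with $\det W=0$ but $f\not\equiv 0$ (which you yourself flag, e.g.\ meridian/parallel nets on a sphere) are not excluded, and neither flatness nor the common base point is derived from $f\equiv 0$. Your $\Pi$/fibre framework is exactly the right machinery --- it is the mechanism behind \cite[Theorem~4.5]{KrasZube2020} --- but as written the proof of the characterization stops where the paper's citation begins.
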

\begin{proof}
This is a particular case of the result \cite[Theorem~4.5]{KrasZube2020} since
the patch coming from a B-plane in the Study quadric is \M equivalent to a bilinear QB patch
with real weights. In our case, this can only be a planar rectangular patch since its 
intersecting lines should be orthogonal.
\end{proof}

\begin{theorem}\label{th:Du-control-points}
The Dupin cyclide principal patch with the given corner points 
$p_0,p_1,p_2,p_3$ on an M-circle and orthogonal tangent vectors $v_1$ and $v_2$ 
at $p_0$ can be parametrized using the DC patch
with the following weights (or homogeneous control points):
\begin{itemize}
\item[(i)]
    $p_0 = \infty$ and $p_1,p_2,p_3$ are collinear, $p_1 \ne p_2$, then 
    \begin{equation}\label{inf-weights}
    \begin{pmatrix}u_i \\ w_i \end{pmatrix}_{i=0,\ldots,3} =
    \begin{pmatrix}
    1 & -p_1 v_1 & -p_2 v_2 & p_3 (p_1-p_2) v_1 v_2 \\ 
    0 & -v_1 & -v_2 & (p_1-p_2) v_1 v_2 
    \end{pmatrix}.
    \end{equation}
\item[(ii)]
all control points are finite, only $p_1$ and $p_2$ may coincide with $p_3$, then
\begin{align}\label{fin-weights}
w_0 &= 1, \ w_1=(p_1-p_0)^{-1} v_1,\ w_2=(p_2-p_0)^{-1} v_2,  \\    
w_3 &= (p_3-p_0)^{-1}\left((p_1-p_0)^{-1}-(p_2-p_0)^{-1}\right) v_1 v_2. 
\end{align}
\end{itemize}
\end{theorem}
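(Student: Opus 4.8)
The plan is to verify each weight assignment by checking that it reproduces the prescribed corners, realizes the prescribed orthogonal tangents at $p_0$, and yields a genuine DC patch, and then to link (i) and (ii) by a single inversion. The corner check is immediate: writing $h_i=(u_i,w_i)$ and applying $\pi(u_i,w_i)=u_iw_i^{-1}$, in (ii) one has $u_i=p_iw_i$ by construction, so $\pi(h_i)=p_i$; in (i) the entry $w_0=0$ gives $\pi(h_0)=\infty=p_0$, while for $i=1,2,3$ the common right factors $v_1$, $v_2$, $(p_1-p_2)v_1v_2$ cancel and leave $u_iw_i^{-1}=p_i$.

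Next I would fix $w_0,w_1,w_2$ through the tangents at $p_0$. Differentiating the circular edge~\eqref{eq:circle} with the quaternionic quotient rule gives the boundary tangent $(p_1-p_0)\,w_1w_0^{-1}$ at the initial corner. Applied to the two edges issuing from $p_0$ in (ii), where $w_0=1$, $w_1=(p_1-p_0)^{-1}v_1$ and $w_2=(p_2-p_0)^{-1}v_2$, the factors $(p_j-p_0)(p_j-p_0)^{-1}$ collapse and the tangents come out exactly $v_1$ and $v_2$; this is what forces the stated form of $w_0,w_1,w_2$, and $v_1\perp v_2$ furnishes the corner orthogonality demanded by Definition~\ref{def:DC-patch}. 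In (i) the same directions are read off the two edge lines through $\infty$ (each edge is the line $p_j-\tau v_j^{-1}$), the formula $(p_1-p_0)w_1w_0^{-1}$ being unavailable there because $w_0=0$.

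The technical heart is the fourth weight $w_3$ together with the orthogonality property. Here I would impose that the whole bilinear blend lie in $\im\widehat\H$, i.e.\ $\S(U,W)\equiv0$; expanding in the Bernstein products $B_iB_j$, the corner coefficients vanish automatically (each $p_i\in\im\H$), the edge coefficients give $\tilde{\S}(h_i,h_j)=0$ for the four boundary arcs (those issuing from $p_0$ hold automatically, since a real point, a second real point and a real tangent determine a real circle), and the single diagonal relation $\tilde{\S}(h_0,h_3)+\tilde{\S}(h_1,h_2)=0$, coming from $B_0B_3=B_1B_2$, ties $h_3$ to the other three. These reality relations cut $w_3$ down to a one-parameter family, and the orthogonality $\partial_sP\perp\partial_tP$ --- equivalently, the Farin compatibility of Remark~\ref{rem:Farin-patch}, which is the hallmark of the patch being a \M image of a planar rectangle as in the proof of Theorem~\ref{th:implicit} --- selects the Dupin member. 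One then checks that $w_3=(p_3-p_0)^{-1}\bigl((p_1-p_0)^{-1}-(p_2-p_0)^{-1}\bigr)v_1v_2$ solves the two edge conditions for the arcs $p_1p_3$, $p_2p_3$ and the diagonal condition at once.

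Finally I would connect (i) and (ii) by the inversion $\inv_{p_0}^{r}$ centered at the finite corner $p_0$. Its action on weights, $w_i'=u_i-p_0w_i$, becomes, using $u_i=p_iw_i$, the clean law $w_i\mapsto(p_i-p_0)w_i$, which sends $w_0\mapsto0$ (so $p_0\mapsto\infty$), $w_1\mapsto v_1$, $w_2\mapsto v_2$, and $w_3\mapsto\bigl((p_1-p_0)^{-1}-(p_2-p_0)^{-1}\bigr)v_1v_2$, carrying the concyclic corners to the collinear configuration of (i) and matching \eqref{inf-weights} up to the rescaling of the tangent magnitudes induced by the differential of the inversion; hence one case follows from the other. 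The main obstacle throughout is the noncommutative bookkeeping in this $w_3$/orthogonality step --- in particular handling $v_1v_2$ (which is $v_1\times v_2$, since $v_1\perp v_2$) and the inverses $(p_i-p_0)^{-1}$, and correctly tracking the scalar rescaling of the tangents when transporting between the two cases.
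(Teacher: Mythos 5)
Your reduction of (ii) to (i) by the inversion $\inv_{p_0}^{1}$ is exactly the paper's argument for that half, and your corner and tangent checks at $p_0$ are correct. The gap is in part (i), which is where the real content lies. The paper proves (i) by normalizing via M\"obius invariance and subdivision so that $v_1=\jj$, $v_2=\kk$ and $p_i=h_i\ii$ all lie on the $x$-axis, then computing the implicit equation from the determinant \eqref{implicit} and matching it with the known equation of a parabolic Dupin cyclide. You replace this by the claim that the Study-quadric reality conditions plus orthogonality pin down $w_3$, and then write ``one then checks'' that the stated $w_3$ works --- but that check \emph{is} the theorem, and you never perform it, nor do you ever establish that the resulting surface is a Dupin cyclide carrying the prescribed corners and tangents. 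Reality of a bilinear quaternionic quotient and pointwise orthogonality of its partials are nontrivial identities in noncommuting quaternions, not bookkeeping that can be waved through.

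Moreover, the one mechanism you invoke to ``select the Dupin member'' rests on a false equivalence. Orthogonality $\partial_sP\perp\partial_tP$ is not equivalent to the Farin compatibility of Remark~\ref{rem:Farin-patch}: that remark records a property (concyclic Farin points on opposite arcs) of the patches under discussion, not a criterion singling out orthogonal patches among bilinear QB patches with image in $\im\widehat{\H}$. Likewise, ``M\"obius image of a planar rectangle'' is the degenerate exceptional case in the proof of Theorem~\ref{th:implicit} (the B-plane case where the determinant vanishes identically), not the hallmark of a generic orthogonal patch. To close the gap you would either have to verify $\S(U,W)\equiv0$ and the orthogonality directly for the weights \eqref{inf-weights} --- which is only feasible after a normalization like the paper's, at which point you are essentially reproducing its computation --- or compute the implicit surface from \eqref{implicit} and identify it as a Dupin cyclide, as the paper does.
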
 
\begin{proof}
(i) Because of \M invariancy one can assume: $v_1 = \jj$, $v_2 = \kk$, and
the straight lines, one through $p_1$ with direction $v_1$, and the other through $p_2$ with direction $v_2$, 
are crossing the $x$-axis.
By subdividing the initial patch along particular parameter values in both directions
one can choose all three control points on the $x$-axis: $p_i = h_i \ii$, $i=1,2,3$.
Then the formula \eqref{inf-weights} gives the homogeneous control points
\begin{equation*}
    \begin{pmatrix}u_i \\ w_i \end{pmatrix}_{i=0,\ldots,3} =
    \begin{pmatrix}
    1 & -h_1 \kk & h_2 \jj & -(h_1 - h_2) h_3 \ii \\ 
    0 &  -\jj & -\kk & -h_1 + h_2
    \end{pmatrix}.
\end{equation*}
Using the determinant \eqref{implicit}, the implicit equation can be computed 
\[
F_{U,W} = (h_1 - h_2)\left((x-h_1)(x-h_2)(x-h_3) + (x-h_1) y^2 + (x-h_2) z^2\right).
\]
This is indeed a Dupin cyclide: by substituting $h_1 = -r-p/2$, $h_2 = -r + p/2$,
$h_3 = r$ the equation of parabolic cyclide $F_P(x,y,z,r)$ (see \cite{chandru1989geometry}) 
is obtained. 

(ii) This general case is reduced to the previous item (i) by applying the inversion $\inv_{p_0}^1$ with center in the point $p_0$. 
\end{proof}

Bilinear DC patches can parametrize M-spheres or can degenerate to M-circles or 
isolated points when their Jacobian vanishes everywhere.  

\begin{lemma}\label{lem:spherical-cond}
The non-degenerated bilinear DC patch is an M-sphere if and only if
the following two equivalent conditions on its opposite homogeneous control points are satisfied
\begin{equation}\label{spherical-cond}
\S(u_i,w_j) + \S(u_j,w_i) = 0, \ (i,j) = (0,3), (1,2).    
\end{equation}
\end{lemma}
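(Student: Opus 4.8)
The plan is to translate the property ``lies on an M-sphere'' into a linear condition on the Study quadric, and to read off \eqref{spherical-cond} directly from the bilinear structure. First I would expand the identity that already makes $P$ a DC patch. Since $P = UW^{-1}$ takes values in $\im\widehat\H$, we have $\S(U,W)\equiv 0$; writing $U=\sum_i u_iB_i$, $W=\sum_i w_iB_i$ in the bilinear Bernstein basis $B_0=(1-s)(1-t)$, $B_1=s(1-t)$, $B_2=(1-s)t$, $B_3=st$ gives
\[
\S(U,W)=\sum_{i,j}\S(u_i,w_j)\,B_iB_j .
\]
The nine products $B_iB_j$ are linearly independent apart from the single relation $B_0B_3=B_1B_2$, so matching coefficients yields $\S(u_i,w_i)=0$ (each corner is a genuine point of $\R^3$), the four ``edge'' identities $\S(u_i,w_j)+\S(u_j,w_i)=0$ for $(i,j)\in\{(0,1),(0,2),(1,3),(2,3)\}$, and the single ``diagonal'' identity
\[
\bigl(\S(u_0,w_3)+\S(u_3,w_0)\bigr)+\bigl(\S(u_1,w_2)+\S(u_2,w_1)\bigr)=0 .
\]
This last identity holds for \emph{every} DC patch, so it already shows that the two requirements in \eqref{spherical-cond} are equivalent to each other; it then remains to prove that either one is equivalent to the patch being an M-sphere.

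Next I would reinterpret these relations through the polar bilinear form $B$ of $\S$, for which $\S(u_i,w_j)+\S(u_j,w_i)=2B\bigl((u_i,w_i),(u_j,w_j)\bigr)$. Isotropy together with the edge identities says that the four edge lines through the control points lie on the Study quadric (consistent with each boundary arc being an M-circle), while \eqref{spherical-cond} adds that the two diagonal lines lie on it as well. Hence \eqref{spherical-cond} holds if and only if all six lines through the four control points, and therefore the whole projective subspace they span, are contained in the Study quadric; that is, the span of $(u_0,w_0),\dots,(u_3,w_3)$ is totally $\S$-isotropic.

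It then suffices to prove that a non-degenerate DC patch lies on an M-sphere if and only if its control points span a totally isotropic subspace. For the ``only if'' direction I would use \M invariance: an M-sphere is \M equivalent to a plane, so after a suitable \M transformation (which acts linearly on homogeneous control points by the formula in the Remark and preserves $\S$ projectively) the patch becomes a planar rectangular patch, whose standard control points have real weights and purely imaginary numerators; a direct check then gives $B\equiv 0$ on them, and transporting back keeps the span totally isotropic. For the converse, a totally isotropic span lies on the Study quadric, so its $\pi$-image is contained in $\pi$ of a projective subspace of $\mathcal S$; since projective lines on $\mathcal S$ map to M-circles, this image can only be a point, an M-circle, or an M-sphere, and the non-degeneracy hypothesis rules out the first two.

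The step I expect to be the main obstacle is this converse, specifically controlling the projective dimension of the span. A maximal totally isotropic subspace has projective dimension three, and $\pi$ maps it onto all of $\widehat\R^3$, so it is essential to show that the four control points of a genuine patch cannot be in general position inside such a maximal subspace: they must span a subspace of projective dimension at most two, whose $\pi$-image is an honest M-sphere. I would settle this using the concyclicity of the four corners (which makes them coplanar and forces the dimension to drop) together with non-degeneracy, thereby closing the equivalence.
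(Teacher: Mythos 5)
The paper's own proof is a one-line citation to \cite[Lemmas~3.1 and 3.2]{KrasZube2020}, so your attempt is necessarily a self-contained alternative. Its first step is correct and genuinely clarifying: expanding $\S(U,W)\equiv 0$ over the ten products $B_iB_j$, which satisfy the single relation $B_0B_3=B_1B_2$, does force $\S(u_i,w_i)=0$, the four edge identities, and the identity $\bigl(\S(u_0,w_3)+\S(u_3,w_0)\bigr)+\bigl(\S(u_1,w_2)+\S(u_2,w_1)\bigr)=0$ for \emph{every} bilinear QB patch with image in $\im\widehat{\H}$. This explains why the two conditions in \eqref{spherical-cond} are equivalent to each other, a point the paper leaves implicit.

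The converse direction, however, has a genuine gap, and the repair you propose for it is based on a false implication. Condition \eqref{spherical-cond}, combined with the automatic identities, says exactly that the span of the four homogeneous control points is totally isotropic for $\S$, which is a split form of signature $(4,4)$; a totally isotropic projective $3$-space therefore belongs to one of two ruling families of the Study quadric. Members of the family containing the fibers of $\pi$ map under $\pi$ to a point or an M-sphere (the harmless case), but members of the opposite family meet every fiber and map \emph{onto} $\widehat{\R}^3$, so $\pi(\mathrm{span})$ being an M-sphere is not automatic. You correctly sense this obstacle but propose to resolve it by arguing that concyclicity of the four corners forces the span down to projective dimension at most $2$. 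That is false: in Example~\ref{exa:bipolar} the non-degenerate planar patch with homogeneous control points $(0,1)$, $(\ii,1)$, $(\jj,1)$, $(\ii+\jj,1-a\kk)$, $a\neq 0$, has concyclic corners, yet these four vectors are linearly independent in $\R^8$, so the span is a full maximal isotropic (here of the first family, with image the plane $z=0$). Concyclicity constrains the points $p_i=u_iw_i^{-1}$, not the weights, and says nothing about linear dependence of the homogeneous representatives. The same example undercuts the reduction in your ``only if'' direction to ``a planar rectangular patch with real weights'' ($w_3=1-a\kk$ is not real); that direction can still be salvaged by verifying \eqref{spherical-cond} directly on the $1$-polar and $2$-polar canonical forms of Lemma~\ref{lem:2d-dc} and invoking \M invariance of $\S$ up to scalar, but the converse needs a genuinely different argument --- essentially the B-plane analysis of \cite{KrasZube2020} --- to exclude the second ruling family.
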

\begin{proof}
Directly follows from \cite[Lemmas~3.1 and 3.2]{KrasZube2020}.    
\end{proof}

\begin{remark}
\label{rem:spherical-cond}
The condition \eqref{spherical-cond} is satisfied when
the $2$-linear QB surface degenerates to a point, but it might be non-zero in some
cases of degeneration to an M-circle.
\end{remark}

\subsection{Bicircular quartics}
A real plane algebraic curve of degree 4, that doubly covers the circular points $(0\!:\!1\!:\!\pm \ic)$ at infinity, where $\ic$ is the imaginary unit, is called a bicircular quartic. Their implicit equation has the form
\begin{equation}
\label{eq:general_bq}
\lambda (x^2+y^2)^2+L(x,y)(x^2+y^2)+Q(x,y)=0,
\end{equation} 
where $\lambda$ is a constant, $L$ is a linear form and $Q$ is a quadratic polynomial in $x$ and $y$. 
The curve \eqref{eq:general_bq} is a circular cubic if $\lambda=0$ and a conic if $\lambda=L=0$.

If a bicircuar quartic $B$ is preserved by an inversion w.r.t. an M-circle $C$, then $C$ is called an M-circle of symmetry of $B$. The following well-known result will be useful in this paper.
\begin{theorem}
\label{th:bq-properties}
A bicircular quartic has $4$ mutually orthogonal M-circles of symmetry, and at least two of these M-circles are real. If the curve has one oval, then the other two circles are complex conjugated circles. If the curve has two ovals, three of the circles are real and the other one is imaginary.
\end{theorem}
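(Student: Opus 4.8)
The plan is to linearize the problem through the conformal (pentaspherical) model of the inversive plane. First I would embed $\widehat{\R}^2 = \R^2\cup\{\infty\}$ into $\R P^3$ by the light-cone map $(x,y)\mapsto [\,1 : x : y : x^2+y^2\,]$, so that points of the plane become the null points of a symmetric bilinear form $\Omega$ of signature $(3,1)$ whose zero locus is a quadric $\mathcal{Q}$. In this model M-circles correspond to points of $\R P^3$ off $\mathcal{Q}$: a space-like point ($\Omega(c,c)>0$) represents a real M-circle and a time-like point ($\Omega(c,c)<0$) an imaginary one; inversion in an M-circle $C$ is realized by the $\Omega$-reflection $R_c$ in the point $c$ and its polar plane, and two M-circles are orthogonal exactly when their representatives are $\Omega$-orthogonal. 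Substituting the embedding into a general quadric $\sum a_{ij}\xi_i\xi_j=0$ reproduces precisely the bicircular form \eqref{eq:general_bq}; hence every bicircular quartic $B$ lifts to the curve $\mathcal{Q}\cap\Phi$ for a real quadric $\Phi$, and the statement ``$B$ is preserved by inversion in $C$'' becomes ``$R_c$ preserves the pencil generated by $\mathcal{Q}$ and $\Phi$.''

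The existence and orthogonality of the four symmetry circles then reduce to simultaneous diagonalization. I would form the $\Omega$-self-adjoint operator $A=\Omega^{-1}\Phi$; for a regular pencil its eigenvectors $c_1,\dots,c_4$ form a common self-polar tetrahedron of $\mathcal{Q}$ and $\Phi$. Each reflection $R_{c_i}$ fixes both quadrics, so each $c_i$ yields an M-circle of symmetry of $B$, and since the tetrahedron is self-polar we have $\Omega(c_i,c_j)=0$ for $i\neq j$, i.e. the four M-circles are mutually orthogonal. This already gives the first assertion.

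For the reality statement I would use the structure theory of self-adjoint operators in an inner-product space with exactly one negative square. Since $\Omega$ has signature $(3,1)$, $A$ can carry at most one complex-conjugate pair of eigenvalues; hence at least two eigenvalues, and thus at least two of the $c_i$, are real. A signature count on the $\Omega$-orthogonal decomposition then separates the two cases: if $A$ has a complex-conjugate pair, its invariant real $2$-plane must absorb the single negative direction and therefore carries signature $(1,1)$, forcing the two remaining real eigenvectors to be space-like, so that $B$ has two real circles and one complex-conjugate pair of circles; if all eigenvalues are real, the signature $(3,1)$ distributes over the real self-polar tetrahedron as three space-like vertices and one time-like vertex, giving three real circles and one imaginary circle.

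The remaining, and hardest, point is to match these two algebraic cases with the topological description ``one oval'' versus ``two ovals.'' For this I would pass to the real intersection $\mathcal{Q}\cap\Phi$ of two quadrics in $\R P^3$ and invoke its classical classification: the number of connected components of the real curve is governed by the Segre symbol of the pencil and by the signatures of the intermediate quadrics $\Phi-\mu\,\mathcal{Q}$ between consecutive real roots $\mu$. A direct count shows that a genuine complex-conjugate root pair corresponds to a single oval of $B$, while four real roots yield two ovals, completing the correspondence. I expect this last step, the careful bookkeeping linking real eigenvalue patterns to the number of real components, to be the main obstacle; the conformal linearization and the diagonalization arguments are otherwise routine.
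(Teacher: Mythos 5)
The paper does not prove this statement at all --- it simply cites Hilton \cite{Hilton1932}, so any self-contained argument is new content relative to the text. Your route through the tetracyclic (pentaspherical) model is the classical one and its core is sound: the lift $(x,y)\mapsto[1:x:y:x^2+y^2]$ onto a quadric $\mathcal{Q}$ of signature $(3,1)$ does turn \eqref{eq:general_bq} into $\mathcal{Q}\cap\Phi$, inversions become $\Omega$-reflections, orthogonality becomes $\Omega$-orthogonality, and for a regular pencil the eigenvectors of the $\Omega$-self-adjoint operator $A=\Omega^{-1}\Phi$ give the common self-polar tetrahedron, hence the four mutually orthogonal symmetry circles. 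The signature bookkeeping is also right: one negative square admits at most one conjugate pair of eigenvalues, and the two subcases distribute the time-like direction exactly as you say. Two caveats, one minor and one substantive. Minor: the argument silently assumes the pencil is regular ($A$ diagonalizable with distinct eigenvalues); Jordan blocks correspond to tangent quadrics, i.e.\ singular quartics or circular cubics, so you should state that the theorem is about nondegenerate bicircular quartics (which is how the paper uses it).

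The substantive gap is the last step: the equivalence ``complex-conjugate pair of roots $\Leftrightarrow$ one oval'' and ``all roots real $\Leftrightarrow$ two ovals'' is asserted via ``a direct count'' that is never performed, and you yourself flag it as the main obstacle. As written this is a claim, not a proof. It can, however, be closed cheaply without the full Segre-symbol classification of real quadric pencils: your signature argument already guarantees two \emph{real} orthogonal symmetry circles in both cases, so a M\"obius transformation brings the curve to the symmetric canonical form \eqref{eq:bq-canonic}, ${\cal B}^{\delta}:(x^2+y^2)^2-2Kx^2+2My^2+\delta=0$. There the characteristic polynomial of the pencil factors explicitly as $(\mu+2K)(\mu-2M)(\mu^2-4\delta)$ up to a constant, so the remaining pair of roots is real precisely when $\delta=+1$ and complex precisely when $\delta=-1$; and for \eqref{eq:bq-canonic} the oval count as a function of $\delta$ is an elementary sign analysis of the quartic restricted to the two axes. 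Inserting this reduction in place of the appeal to the classification of real pencils makes the proof complete and, in fact, shorter than the program you outline.
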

\begin{proof}
This is proved in \cite[p.~304]{Hilton1932}.
\end{proof}

The two real M-circles of symmetry of a bicircular quartic can be brought by \M transformations to the $x$-axis and $y$-axis, and the equation of the curve reduced to the canonical symmetric form
\begin{equation}
\label{eq:bq-canonic}
{\cal B}^{\delta}: (x^2+y^2)^2-2Kx^2+2My^2+\delta=0, \qquad \delta=\pm 1, \quad K,M\in \R.
\end{equation}
For $\delta=-1$ (resp. $\delta=+1$), this curve ${\cal B}^{\delta}$ has one oval (resp. two ovals).

\begin{definition}
\label{def:foc}
Three symmetric bicircular quartics on the orthogonal coordinate planes 
\begin{align}
\label{eq:B_1}
{\cal B}_1^{\delta}: z=0,\ (x^2+y^2)^2 - 2K x^2 + 2M y^2 + \delta=0,\\
\label{eq:B_2}
{\cal B}_2^{\delta}: y=0,\ (x^2+z^2)^2 - 2M z^2 + 2N x^2 + \delta=0,\\
\label{eq:B_3}
{\cal B}_3^{\delta}: x=0,\ (y^2+z^2)^2 - 2N y^2 + 2K z^2 + \delta=0,
\end{align}
are called focal bicircular quartics if the coefficients of
their equations satisfy
\begin{equation}
\label{eq:foc-rel}
KM + MN + NK + \delta = 0.
\end{equation}
Moreover, for each $i=1,2,3$, the intersection points between the plane of ${\cal B}_i^{\delta}$ with the other two focal curves are called focal points of ${\cal B}_i^{\delta}$.
\end{definition}
In order to have simple expressions for focal points, we change parameters
\begin{equation}
\label{eq:foc-param}
K=-\frac{c\delta+c^{-1}}{2}, \quad N=-\frac{a\delta+a^{-1}}{2}, \quad M=-\frac{b\delta+b^{-1}}{2}, \quad \delta=\pm 1,
\end{equation}
where $b=-(a+c)(1+ac\delta)$ is computed using \eqref{eq:foc-rel}.
The \M canonical forms of 1-oval bicircular quartics ($\delta=-1$) are defined by $a,c>0$ and $ac>1$ with a clear geometric meaning: the curve ${\cal B}_1^{-}$ has $4$ real focal points, two of them on the $x$-axis and the other two on $y$-axis. Similarly, for ${\cal B}_2^{-}$ and ${\cal B}_3^{-}$ of different coordinate axes. The set of focal points of ${\cal B}_1^{-}$ and ${\cal B}_2^{-}$ are
\begin{equation}
\label{eq:foc-minus}
\Phi_1^- = \{ \pm \ii/\sqrt{a}, \pm\jj\sqrt{a}\},\quad \Phi_2^- = \{ \pm \ii\sqrt{c}, \pm\kk/\sqrt{c}\}.
\end{equation}
The focal points of ${\cal B}_3^{-}$ can also be computed symmetrically. The condition $ac>1$ keeps the focal points of ${\cal B}_1^{-}$ inside the oval. 
Note that the other two focal curves ${\cal B}_2^{-}$ and ${\cal B}_3^{-}$ are all 1-oval curves in this case; see the left side of Figure~\ref{fig:1-2-foc-bqs}.
On the other hand, the \M canonical forms of 2-oval bicircular quartics ($\delta=+1$) are defined by $c<0$ and $a>|c|>1/a$ with the following geometric meaning: the $4$ real focal points of ${\cal B}_1^{+}$ and ${\cal B}_2^{+}$ are lying on the same $x$-axis:
\begin{equation}
\label{eq:foc-plus}
\Phi_1^+ = \{ \pm \ii/\sqrt{a}, \pm\ii\sqrt{a}\},\quad \Phi_2^+ = \{ \pm \ii\sqrt{-c}, \pm\ii/\sqrt{-c}\}.
\end{equation}
The curve ${\cal B}_3^{+}$ has no real points in this case. The condition $a>|c|>1/a$ restricts ${\cal B}_1^{+}$ to a curve with enclosed ovals and ${\cal B}_2^{+}$ to a curve with two separated ovals, see the right side of Figure~\ref{fig:1-2-foc-bqs}.

\begin{figure}[H]
    \centering
    \includegraphics[width=0.45\textwidth]{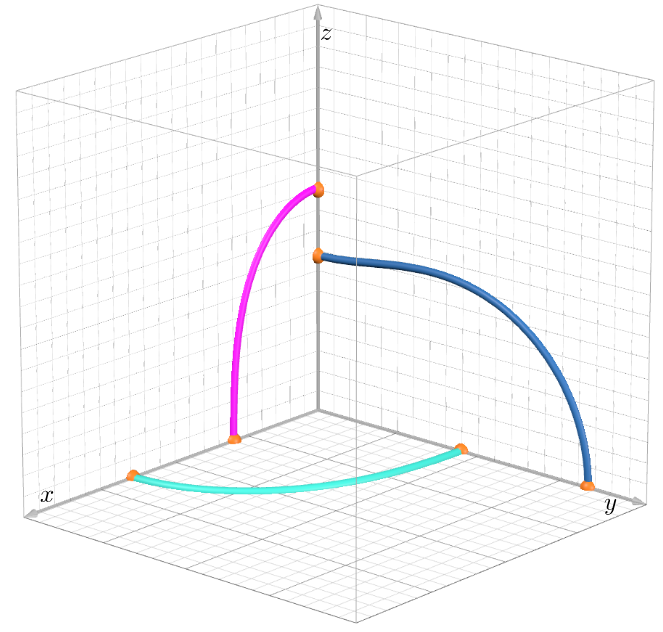} \qquad
    \includegraphics[width=0.45\textwidth]{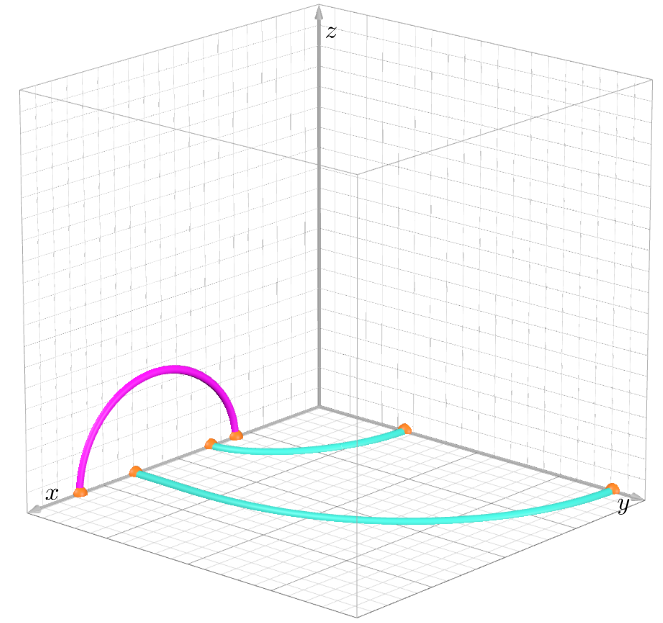}
    \caption{Focal symmetric bicircular quartics are depicted in quarters on the first octant of the Euclidean space: 1-oval curves on the left and 2-oval curves on the right.}
    \label{fig:1-2-foc-bqs}
\end{figure}




\section{Dupin cyclidic systems}
\label{sec:DC-systems}

\begin{definition}
\label{def:DC}
A \emph{Dupin cyclidic (DC) system} in $\R^3$ as the $3$-linear rational quaternionic map
to the imaginary quaternions
\[
F: (\R P^1)^3 \to \im\!\widehat{\H}\cong \widehat{\R}^3, \quad F = U W^{-1}, \ U, W \in \H[s,t,u],
\]
such that: all three partial derivatives $\partial_s F$, 
$\partial_t F$, $\partial_u F$ are mutually orthogonal and the Jacobian $\mathrm{Jac}(F)$ is non-zero at least in one point.
\end{definition}

Here $\widehat{\R}^3 = \R^3 \cup \{\infty\}$ is treated as 3-dimensional sphere $S^3$ and $F$ is a smooth map between differential manifolds. 
Therefore, any differential properties of $F$ at the infinite point $\infty$ should be computed for the map $\inv_0^1 \circ F$ at the origin.

\begin{definition}
\label{def:M-equiv}
Two DC systems $F$ and $F'$ are \emph{\M equivalent} if and only if
$F' = \mu \circ F \circ \rho$, where $\mu$ is a \M transformation of $\widehat{\R}^3$ and $\rho$ is an algebraic automorphism of $(\R P^1)^3$
generated by projective transformations of lines $\R P^1$ and their permutation.
\end{definition}

Any DC system $F$ defines three families of surfaces in $\R^3$: namely
$s$-surfaces $F_{s**} = \{ F(s,t,u) \mid t, u \in \R P^1\}$,
$t$-surfaces $F_{*t*}$, and $u$-surfaces $F_{**u}$, defined in similar way. 

\begin{definition}
\label{def:sing}
The \emph{singular locus} $\sing(F) \subset \widehat{\R}^3$ of DC system $F$ 
is the image of all points where its Jacobian vanishes.
Define  $\sing_i(F) \subset \R^3$, $i=1,2,3$,
as images of sets where $\partial_s F=0$, $\partial_t F=0$, $\partial_u F=0$,
respectively. 
\end{definition}

\begin{lemma}
\label{lem:DC-systems}
Singular sets of a DC system $F$ have the following properties:
\begin{itemize}
\item[(i)] if $p \in \sing_i(F)$, $i=1,2,3$, then $F^{-1}(p)$ 
contains a line in the corresponding direction of $(\R P^1)^3$;
\item[(ii)]  $\sing(F) = \sing_1(F) \cup \sing_2(F) \cup \sing_3(F)$;
\item[(iii)] $\sing_1(F) \subset F_{s**}$,  $\sing_2(F) \subset F_{*t*}$, 
$\sing_3(F) \subset F_{**u}$.  
\end{itemize}
\end{lemma}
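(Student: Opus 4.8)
The plan is to treat the three parts in the order (ii), (i), (iii), since (ii) follows directly from the orthogonality hypothesis, (i) is the one genuine computation, and (iii) is an immediate corollary of (i). Throughout I would invoke the paper's convention that derivatives at the infinite point are read off from $\inv_0^1\circ F$; because $\inv_0^1$ is a M\"obius diffeomorphism of $\widehat{\R}^3$ with invertible differential, and because $\inv_0^1\circ F$ is again a trilinear QB formula by the inversion remark, the vanishing loci of the partial derivatives and of the Jacobian are unchanged, so it suffices to argue at finite points.

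For (ii), I would write the Jacobian as the scalar triple product $\mathrm{Jac}(F)=\det[\,\partial_s F,\ \partial_t F,\ \partial_u F\,]$ of the three partial-derivative vectors, viewed in $\im\H\cong\R^3$. The defining property of a DC system is that these three vectors are mutually orthogonal, and three mutually orthogonal vectors in $\R^3$ are linearly dependent exactly when at least one of them is the zero vector. Hence $\mathrm{Jac}(F)=0$ precisely at the points where $\partial_s F=0$, $\partial_t F=0$, or $\partial_u F=0$, and passing to images gives $\sing(F)=\sing_1(F)\cup\sing_2(F)\cup\sing_3(F)$.

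For (i) I would fix a preimage point $(s_0,t_0,u_0)$ with $\partial_s F=0$ there and restrict $F$ to the $s$-line $L=\{(s,t_0,u_0)\}$. Since $F=UW^{-1}$ is trilinear, the restriction is a degree-one quaternionic rational curve $C(s)=\tilde U(s)\,\tilde W(s)^{-1}$ with $\tilde U(s)=a+bs$ and $\tilde W(s)=c+ds$ for fixed $a,b,c,d\in\H$. Differentiating via $\partial_s(\tilde W^{-1})=-\tilde W^{-1}(\partial_s\tilde W)\tilde W^{-1}$ and normalizing $g=b^{-1}a$, $e=d^{-1}c$ (the cases $b=0$ or $d=0$ being easier), a direct computation yields $\partial_s C=-\,b(g-e)(s+e)^{-2}d^{-1}$. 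The decisive observation — this is where the real-scalar nature of $s$ is used — is that the coefficient $b(g-e)$ is independent of $s$, because $s$ commutes with all quaternionic coefficients. Therefore $\partial_s C$ vanishes at one parameter value if and only if $g=e$, i.e.\ if and only if $\partial_s C\equiv 0$; and when $g=e$ one has $\tilde U=b(s+e)$, $\tilde W=d(s+e)$, so $C\equiv bd^{-1}=p$. Thus the whole line $L$ maps to $p$, that is $L\subset F^{-1}(p)$, and the same argument applies verbatim in the $t$- and $u$-directions.

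Part (iii) is then immediate: if $p\in\sing_1(F)$, part (i) shows the entire $s$-line $\{(s,t_0,u_0)\}$ maps to $p$, so for every fixed value of $s$ we have $p=F(s,t_0,u_0)\in F_{s**}$; hence $\sing_1(F)\subset F_{s**}$ for each $s$, and symmetrically $\sing_2(F)\subset F_{*t*}$, $\sing_3(F)\subset F_{**u}$. I expect the only real obstacle to be the propagation claim inside (i): that vanishing of the tangent at a single parameter value forces the coordinate curve to collapse to a point along the whole line. This rests entirely on the $s$-independence of the degeneracy coefficient $b(g-e)$, which would fail without the commutativity of the real scalar $s$; once this is verified, parts (ii) and (iii) follow cleanly.
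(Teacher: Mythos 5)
Your proposal is correct and follows the same route as the paper's (very terse) proof: orthogonality of the three partial derivatives forces the Jacobian to vanish exactly where one of them does, and the linearity of the quaternionic formula restricted to a coordinate line shows that a vanishing tangent at one parameter value collapses the whole line to a point, from which (iii) follows. Your computation $\partial_s C=-\,b(g-e)(s+e)^{-2}d^{-1}$ is simply the detailed justification of the paper's one-line appeal to linearity.
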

\begin{proof}
(i) follows from the linearity of the quaternionic formula when restricted to a line
in $(\R P^1)^3$. (ii) follows from the orthogonality of partial derivatives.
Finally, item (iii) follows from (i).  
\end{proof}

Representing the map $F$ in \B form will be useful. 
Define the associated \emph{Dupin cyclidic (DC) cube} as a map 
$D: [0,1]^3 \to \H^2$ such that $\pi \circ D = F$ is given in the \B form
\begin{equation}
\label{eq:DC-cube-QB}
D(s,t,u) = 
\begin{pmatrix}
U(s,t,u) \\ W(s,t,u)
\end{pmatrix}
= \sum_{i=0}^1\sum_{j=0}^1\sum_{k=0}^1 
\begin{pmatrix}
u_{ijk} \\ w_{ijk}
\end{pmatrix}
B_i^1(s)B_j^1(t)B_k^1(u),
\end{equation}
where $B_0^1(t) = 1-t$,  $B_1^1(t) = t$ are linear Bernstein polynomials.  
Here homogeneous control points $(u_{ijk},w_{ijk}) = (U(i,j,k),W(i,j,k))$ define
control points $p_{ijk} = u_{ijk} w^{-1}_{ijk}$ in $\im\,\H$ if $u_{ijk} \ne 0$
($p_{ijk} = \infty$ otherwise), $i,j,k \in \{0,1\}$.
Alternative indexing of control points also will be used $p_0 = p_{000},p_1 = p_{100},p_2 = p_{010},\ldots,p_7 = p_{111}$.
This construction directly generalizes the bilinear QB patch described in 
Section~\ref{subsec:DuC} and \cite{ZubeKras2015}.

\begin{remark}
\label{rem:DCSphereCond}
A DC cube $D$ can be sliced into families of DC patches $D_{s**}$, $D_{*t*}$, $D_{**u}$; e.g., for a particular value of $s$ the patch $P = D_{s**}$ will have control points $(u_{sij},w_{sij}) = (U(s,i,j),W(s,i,j))$, $i,j=0,1$, and similarly in $t$- and $u$-directions. Then, Lemma~\ref{lem:spherical-cond} can be used to detect when a slice patch degenerates to an M-sphere in three directions:
$\sigma_1(s)=0$, $\sigma_2(t)=0$, $\sigma_3(u)=0$, where
\begin{align*}
\sigma_1(s)&=\S\big(u_{s00},w_{s11}\big)+\S\big(u_{s11},w_{s00}\big), \ 
\sigma_2(t)=\S\big(u_{0t0},w_{1t1}\big)+\S\big(u_{1t1},w_{0t0}\big),\\
\sigma_3(u)&=\S\big(u_{00u},w_{11u}\big)+\S\big(u_{11u},w_{00u}\big).
\end{align*}
\end{remark}
Remarkably, all the 8 control points of the cube are on an M-sphere.
The existence of the point $p_7$ on the M-sphere can be derived using Miquel's theorem 
on a triangle \cite{miquel1844memoire}.

\begin{theorem}[Miquel's Theorem]
\label{th:miquel}
The $3$ circles, each defined by a vertex of the triangle and two points on the adjacent sides (Figure $\ref{fig:miquel}$), intersect in one point, called the {\em Miquel point}. 
\end{theorem}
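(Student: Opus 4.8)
The plan is to establish the concurrence by directed-angle chasing modulo $\pi$, the natural tool in this setting: it treats M-circles (circles and lines) uniformly and eliminates configuration-dependent case analysis in one stroke. Label the triangle $ABC$, and let $X$, $Y$, $Z$ be the chosen points on the sides $BC$, $CA$, $AB$, respectively, so that the three Miquel circles are the circumcircles of $AYZ$, $BZX$, and $CXY$. Throughout I would use the concyclicity criterion in the form that four points $P,Q,R,S$ lie on one M-circle if and only if $\angle(RP,RQ)=\angle(SP,SQ)\pmod{\pi}$, where $\angle(\cdot,\cdot)$ is the directed angle between two lines.

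First I would fix the prospective common point. The circles through $A,Y,Z$ and through $B,Z,X$ already share $Z$; let $M$ denote their second intersection (if the two circles are tangent at $Z$ then $M=Z$ and the argument degenerates harmlessly). From $A,Y,Z,M$ concyclic I read off $\angle(MZ,MY)=\angle(AZ,AY)=\angle(AB,AC)$, since $Z$ lies on line $AB$ and $Y$ on line $AC$; likewise from $B,Z,X,M$ concyclic I get $\angle(MX,MZ)=\angle(BX,BZ)=\angle(BC,BA)$, using $X$ on $BC$ and $Z$ on $BA$.

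Next I would combine these relations at $M$. By additivity of directed angles,
\begin{equation*}
\angle(MX,MY)=\angle(MX,MZ)+\angle(MZ,MY)=\angle(BC,BA)+\angle(AB,AC).
\end{equation*}
Applying the triangle identity $\angle(AB,AC)+\angle(BC,BA)+\angle(CA,CB)=0\pmod{\pi}$, the right-hand side equals $-\angle(CA,CB)=\angle(CB,CA)=\angle(CX,CY)$, because $X$ lies on $CB$ and $Y$ on $CA$. Hence $\angle(MX,MY)=\angle(CX,CY)$, which by the criterion means precisely that $M,X,Y,C$ lie on one M-circle; that is, $M$ lies on the third Miquel circle as well, and all three circles pass through $M$.

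The only genuine care required is bookkeeping: one must justify the concyclicity criterion in its M-circle form so that the degenerate instances (a ``circle'' becoming a line, or $M$ escaping to $\infty$) are absorbed uniformly, and one must keep the orientation conventions consistent through the chase. I expect this to be the main, though modest, obstacle. A thematically parallel route, closer to the M\"obius viewpoint of the paper, is to apply an inversion $\inv_Z^{1}$ centered at $Z$, which turns the two circles through $Z$ into lines and the sides through $Z$ into themselves, reducing the claim to an incidence among the images before inverting back; I would nonetheless keep the directed-angle argument as the primary proof, since it is self-contained and needs no auxiliary construction.
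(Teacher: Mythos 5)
Your directed-angle proof is correct and is the standard synthetic argument for Miquel's theorem: the angle chase $\angle(MX,MY)=\angle(MX,MZ)+\angle(MZ,MY)=\angle(BC,BA)+\angle(AB,AC)=\angle(CX,CY)$ is sound, and the triangle identity you invoke is the right one. Note, however, that the paper itself offers no proof of Theorem~\ref{th:miquel} at all --- it is stated as a classical fact with a citation to Miquel's 1844 memoir --- so there is nothing to match your argument against; you are filling a gap the authors deliberately left to the literature. The closest thing to a proof in the paper is the adjacent Lemma~\ref{lem:miquel}, which takes the opposite, computational route: it writes down an explicit barycentric formula for $M$ and verifies by symbolic computation that the two cross-ratios $cr(p_1,q_2,q_3,M)$ and $cr(p_2,q_1,q_3,M)$ are real (hence, by Remark~\ref{rem:cross-ratio}, that $M$ lies on two of the three circles), and then appeals to Theorem~\ref{th:miquel} for concurrence with the third. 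Your synthetic proof is more self-contained and proves existence directly, whereas the paper's computation buys the explicit coordinates of $M$ needed to construct the eighth control point $p_7$; the two are complementary. The one place you should be slightly more careful is the tangency case $M=Z$: the inscribed-angle criterion there needs the tangent-chord form (replacing line $MZ$ by the common tangent at $Z$), which is a standard but nontrivial extension rather than something that ``degenerates harmlessly'' without comment; for the generic configurations the paper actually uses, this case does not arise.
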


\begin{figure}[H]
    \centering
    \includegraphics[width=4.7cm]{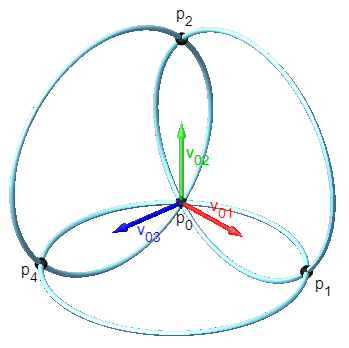}\qquad
    \includegraphics[width=5.6cm]{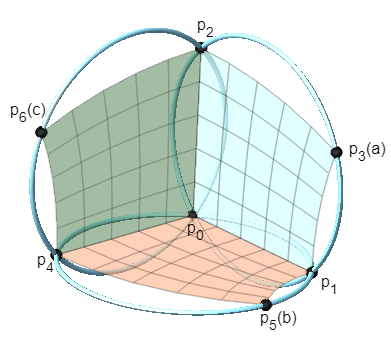} 
    \includegraphics[width=4.9cm]{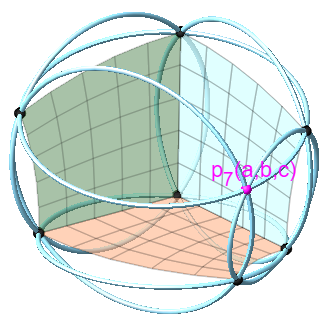}\qquad
     \includegraphics[width=5.2cm]{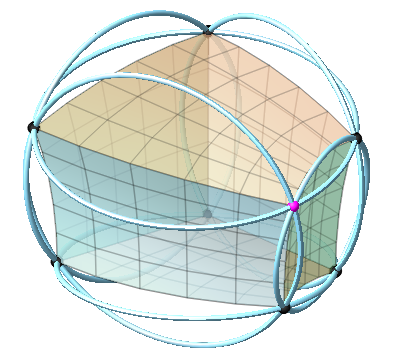}
    \caption{Four steps to build a general Dupin cyclidic cube.}
    \label{fig:unit_cube}
\end{figure}

To construct a DC cube based on 3 given faces, we apply inversion $\inv_{p_0}^1$ at $p_0$ so that all $7$ control points are coplanar and the other one on infinity. We compute the Miquel point $M$ on the triangle $q_1q_2q_4$ with side points $q_3$, $q_5$, $q_6$, where $q_i=\inv_{p_0}^1(p_i)$, and apply the same inversion to obtain $p_7=\inv_{p_0}^1(M)$.

\begin{lemma}[Miquel point]
\label{lem:miquel}
 Let $p_1$, $p_2$, $p_3\in \im\!\H$ be three generic points. Let $d_i=\lVert p_j-p_k\rVert^2$ and let $q_i$ be a point on a side of the triangle $p_1p_2p_3$ such that $q_i=\lambda_i'p_j+\lambda_ip_k$, where $i,j,k\in \{1,2,3\}$ pairwise distinct and $\lambda_i\in \R$, $\lambda_i'=1-\lambda_i$. 
 Then the Miquel point $M$, expressed in baricentric coordinates, is given by
 \[M=\frac{\sum_{i=1}^3 p_i\alpha_i}{\sum_{i=1}^3\alpha_i},\]
 where 
\begin{equation*}
\begin{cases}
\alpha_1 = -\lambda_1\lambda_1'd_1^2+\lambda_1\lambda_2d_1d_2+\lambda_1'\lambda_3'd_1d_3,\\
\alpha_2 = \lambda_1'\lambda_2'd_1d_2-\lambda_2\lambda_2'd_2^2+\lambda_2\lambda_3d_2d_3,\\
\alpha_3 = \lambda_1\lambda_3d_1d_3+\lambda_2'\lambda_3'd_2d_3-\lambda_3\lambda_3'd_3^2.
\end{cases}
\end{equation*}
\end{lemma}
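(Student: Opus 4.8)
The plan is to reduce the statement to a planar computation in barycentric coordinates and to identify $M$ with the \emph{radical center} of the three Miquel circles. Although $p_1,p_2,p_3\in\im\!\H$ live in $\R^3$, the whole configuration is planar: the side points $q_i$ lie in the plane $\Pi$ spanned by the triangle, each of the three Miquel circles passes through three points of $\Pi$ and hence lies in $\Pi$, so the common point $M$ guaranteed by Theorem~\ref{th:miquel} also lies in $\Pi$. I may therefore work entirely inside $\Pi$ using barycentric coordinates with respect to $p_1p_2p_3$, writing $p_1=(1\!:\!0\!:\!0)$, $p_2=(0\!:\!1\!:\!0)$, $p_3=(0\!:\!0\!:\!1)$. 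With the cyclic convention $q_1=\lambda_1'p_2+\lambda_1 p_3$, $q_2=\lambda_2'p_3+\lambda_2 p_1$, $q_3=\lambda_3'p_1+\lambda_3 p_2$, the side points acquire coordinates $q_1=(0\!:\!\lambda_1'\!:\!\lambda_1)$, $q_2=(\lambda_2\!:\!0\!:\!\lambda_2')$, $q_3=(\lambda_3'\!:\!\lambda_3\!:\!0)$, while $d_1,d_2,d_3$ are the squared side lengths opposite $p_1,p_2,p_3$.

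Next I would invoke the classical fact that the barycentric equation of any circle has the shape
\[
d_1\,yz+d_2\,zx+d_3\,xy-(x+y+z)(ux+vy+wz)=0,
\]
in which the quadratic part is the fixed circumcircle form (all circles meet the two circular points at infinity, which are pinned down by the metric) and $(u,v,w)$ is a linear form depending on the particular circle. Substituting the three known points into this equation for each Miquel circle $\mathcal{C}_1$ (through $p_1,q_2,q_3$), $\mathcal{C}_2$ (through $p_2,q_3,q_1$) and $\mathcal{C}_3$ (through $p_3,q_1,q_2$) determines its coefficients; for instance $\mathcal{C}_1$ gives $u_1=0$, $v_1=d_3\lambda_3'$, $w_1=d_2\lambda_2$, and the data for $\mathcal{C}_2,\mathcal{C}_3$ follow by the cyclic permutation $1\!\to\!2\!\to\!3$ of indices.

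The decisive structural point is that all three circles share the same quadratic part, so the pairwise differences of their equations reduce, after dividing by $x+y+z$, to the \emph{linear} radical-axis equations $(u_i-u_j)x+(v_i-v_j)y+(w_i-w_j)z=0$. Since $M$ lies on every $\mathcal{C}_i$, it lies on all three radical axes; hence $M$ is the radical center, obtained by intersecting any two of them. Solving two such linear equations by the cross product of their coefficient vectors yields $(x\!:\!y\!:\!z)$, and a direct expansion of this cross product produces precisely the stated $\alpha_1,\alpha_2,\alpha_3$ (its three components being cyclic images of one another, matching the cyclic structure of the $\alpha_i$). Interpreting these barycentric coordinates as an affine combination then gives $M=\big(\sum_i\alpha_i p_i\big)\big/\big(\sum_i\alpha_i\big)$.

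The genericity hypothesis does the expected work: I need the triangle nondegenerate and $\lambda_i,\lambda_i'\neq0$ so that the $\mathcal{C}_i$ are honest circles cutting the sides in interior points, and I need the three radical axes to be distinct so that the radical center is a single well-defined point. Miquel's theorem is used only to guarantee that a common point of the three circles exists, which is what lets me conclude that the radical center is genuinely a point \emph{on} the circles rather than merely a point of equal power. The main obstacle is thus organizational rather than conceptual: keeping the cyclic index bookkeeping consistent across the three circles and carrying out the $3\times3$ cross-product expansion without sign errors, the answer being quadratic in both the $d_i$ and the $\lambda_i$.
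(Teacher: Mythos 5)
Your proof is correct, and it takes a genuinely different route from the paper's. The paper proceeds by \emph{verification}: it normalizes $p_1=0$, $p_2=\ii$, $p_3=k\ii+m\jj$ up to Euclidean similarities, plugs the candidate $M$ into the quaternionic cross-ratios $cr(p_1,q_2,q_3,M)$ and $cr(p_2,q_1,q_3,M)$, observes that both are real, and invokes the concyclicity criterion (Remark~\ref{rem:cross-ratio}) together with Miquel's theorem to conclude that the stated point is the Miquel point. You instead \emph{derive} the formula: writing each Miquel circle in the standard barycentric form $d_1yz+d_2zx+d_3xy-(x+y+z)(ux+vy+wz)=0$, you read off $(u_1,v_1,w_1)=(0,d_3\lambda_3',d_2\lambda_2)$ and its cyclic images, subtract to get the radical axes, and intersect two of them by a cross product --- I have checked that this cross product expands exactly to the stated $(\alpha_1:\alpha_2:\alpha_3)$, and your cyclic convention $q_1=\lambda_1'p_2+\lambda_1p_3$, etc., agrees with the one the paper uses implicitly. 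Both arguments lean on Miquel's theorem at the same spot, to identify the constructed point as an actual common point of the three circles rather than merely a point of equal power; your genericity remarks (nondegenerate triangle, $\lambda_i,\lambda_i'\neq 0$, distinct radical axes, equivalently $q_1,q_3,M$ not collinear) are the right ones. What your approach buys is an explanation of \emph{where} the $\alpha_i$ come from (components of a cross product of radical-axis normals, manifestly cyclic) without choosing a coordinate frame, and it stays entirely within classical planar geometry; what the paper's approach buys is brevity and consistency with its quaternionic toolkit, at the cost of presupposing the formula and delegating the reality of the cross-ratios to a symbolic computation.
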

\begin{proof}
Up to Euclidean similarities, we assume $p_1=0$, $p_2=\ii$, $p_3=k\ii+m\jj$, where $k,m\in \R$. Then $d_1=m^2+(k-1)^2$, $d_2=k^2+m^2$ and $d_3=1$.
We have the real symbolic cross-ratios
\begin{align*}
cr(p_1,q_2,q_3,M) &= \frac{\lambda_2'H_M}{\lambda_2'^2m^2+(\lambda_2'k-\lambda_3)^2},\\
cr(p_2,q_1,q_3,M) &= \frac{\lambda_1H_M}{\lambda_1^2m^2+(\lambda_1k+\lambda_1'-\lambda_3)^2},
\end{align*}
where $H_M=(\lambda_1\lambda_3+\lambda_2'\lambda_3')d_2-(2\lambda_1k+\lambda_1'-\lambda_3)\lambda_3$.
From Remark \ref{rem:cross-ratio} and Theorem \ref{th:miquel}, it follows that the formula for $M$ defines the Miquel point.
\end{proof}

\begin{figure}[H]
    \centering
    \includegraphics[width=6cm]{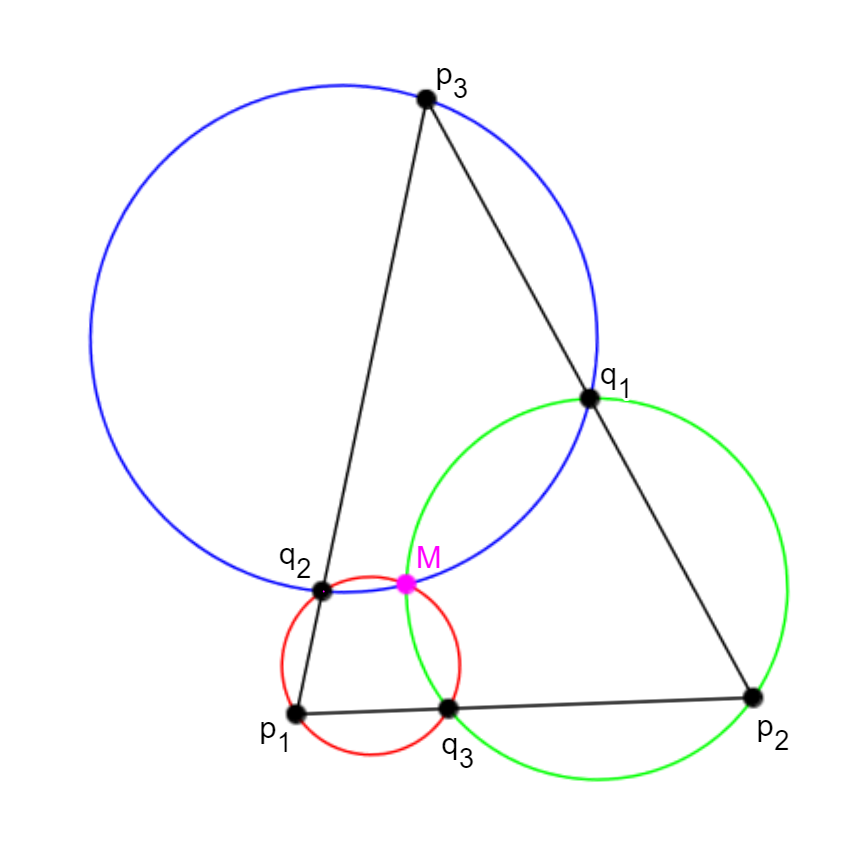}
    \caption{Miquel point.}
    \label{fig:miquel}
\end{figure}

\begin{lemma}
\label{lem:MiquelFarin}
A DC cube can be uniquely built from the compatible DC patches on its three adjacent faces, i.e., when their parametrizations on common arcs coincide. 
\end{lemma}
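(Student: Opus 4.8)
The plan is to normalize the configuration by a single inversion and then split the problem into a corner part controlled by Miquel's theorem and an edge part controlled by Farin points. First I would apply $\inv_{p_0}^1$ to send the common vertex $p_0=p_{000}$ to $\infty$; since QB formulas and the orthogonality condition are \M invariant (the Remark after \eqref{proj-div}, together with the conformality of inversions), it is enough to build the cube in this position and transport the result back. Write $q_i=\inv_{p_0}^1(p_i)$ for the images of the six prescribed corners. The four corners of any DC patch lie on an M-circle, and one of them is now $\infty$, so in each prescribed face the three finite corners become collinear: $q_1,q_2,q_3$ lie on a line, $q_1,q_4,q_5$ on a line, and $q_2,q_4,q_6$ on a line. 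These three lines meet pairwise in the distinct points $q_1,q_2,q_4$, hence (for a non-degenerate frame) are coplanar, so $q_1,\dots,q_6$ lie in the plane of the triangle $q_1q_2q_4$ with $q_3,q_5,q_6$ on its sides $q_1q_2$, $q_1q_4$, $q_2q_4$.

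In this planar picture the only missing corner is $p_7=p_{111}$, and the requirement that the three opposite faces be DC patches forces its image $M=\inv_{p_0}^1(p_7)$ to be concyclic with each triple of adjacent corners: $M$ must lie on the circle through $q_1,q_3,q_5$, on the circle through $q_2,q_3,q_6$, and on the circle through $q_4,q_5,q_6$. Miquel's Theorem~\ref{th:miquel} is exactly the statement that these three circles share a common point, and Lemma~\ref{lem:miquel} gives its position explicitly; this determines $M$, and hence $p_7=\inv_{p_0}^1(M)$, uniquely. This settles existence and uniqueness of the eighth corner.

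It remains to fix the parametrizations, equivalently the weights or Farin points, on the three arcs $q_3M$, $q_5M$, $q_6M$ that meet at $M$ and lie on no prescribed face. For this I would invoke Theorem~\ref{th:Du-control-points}: in each opposite face the two edges through the corner $q_1$ (respectively $q_2$, $q_4$) are already parametrized by the prescribed faces, and the four corners are concyclic by the previous step, so the theorem yields a unique DC patch and thus a definite Farin point on each of $q_3M$, $q_5M$, $q_6M$. The difficulty, which I expect to be the main obstacle, is that every such arc belongs to two opposite faces simultaneously --- for instance $q_3M$ is the $u$-edge opposite to $q_1q_5$ in the face $s=1$ and also the $u$-edge opposite to $q_2q_6$ in the face $t=1$ --- so its Farin point is produced in two ways and these must coincide. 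I would prove this Farin compatibility from the opposite-arc relation of Remark~\ref{rem:Farin-patch}, which fixes each unknown Farin point by a concyclicity condition, reducing the equality of the two determinations to a cross-ratio identity (Remark~\ref{rem:cross-ratio}) that, after inserting the symmetric barycentric coordinates of $M$ from Lemma~\ref{lem:miquel}, expresses the common Farin point symmetrically in the prescribed edge data.

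Once the three edge parametrizations are shown to agree, all eight homogeneous control points $(u_{ijk},w_{ijk})$ are uniquely determined, so the trilinear QB map \eqref{eq:DC-cube-QB} is unique, restricts to the three prescribed DC patches on the adjacent faces by construction, and to three DC patches on the opposite faces by the Miquel--Farin data. I would conclude by checking that this map meets Definition~\ref{def:DC}: its three families of coordinate slices are Dupin-cyclide patches carrying mutually orthogonal tangent frames propagated from the orthogonal frame at $p_0$, and its Jacobian is nonzero there, so the assembled object is a genuine DC cube; transporting back by $\inv_{p_0}^1$ gives the unique DC cube on the three given faces.
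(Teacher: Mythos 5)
Your proposal follows essentially the same route as the paper's proof: invert at $p_0$ to flatten the seven known corners, obtain $p_7$ as the Miquel point of the triangle $q_1q_2q_4$ with side points $q_3,q_5,q_6$, and then use Farin points on the already-parametrized middle edges to pin down the three opposite faces. The one step you leave as a plan --- that the two determinations of the Farin point on each edge incident with $p_7$ agree --- is precisely the step the paper itself dispatches in a single sentence (``follows directly from Miquel Theorem''), so your write-up matches both the structure and the level of completeness of the published argument.
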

\begin{proof}
Farin points (see Definition~\ref{def:Farin-point}) on opposite boundary arcs of 
a DC patch $P(s,t)$ are related according to Remark~\ref{rem:Farin-patch}. 
Their positions visualize reparametrizations in two directions that do not 
change the surface and boundaries of the patch.
Suppose we have bilinear QB parametrizations of three faces of a DC cube $D$ incident with $p_0$ that are compatible on common edges. 
This means that their Farin points $D(1/2,0,0)$, $D(0,1/2,0)$, and $D(0,0,1/2)$ 
are compatible on the corresponding edges.
They uniquely determine 6 other Farin points on opposite edges of these initial 
three DC patches, e.g., the point $D(1/2,0,0)$ determines two others:
$D(1/2,1,0)$ and $D(1/2,0,1)$. Then we compute the Miquel point $p_7$ and 
add three new DC patches, which have already been prescribed Farin points on 
the couples of old edges so that their parametrizations are uniquely defined. 
Are they compatible along the last three edges incident with $p_7$? 
The answer is positive and follows directly from Miquel Theorem. 
\end{proof}

\begin{remark}\label{rem:interior-rep}
Moving Farin points on three edges incident with $p_0$ will define the reparametrization of the DC cube, which is equivalent to the multiplication of 
all $8$ homogeneous control points by real nonzero multipliers in the following order: 
$1$, $\lambda_1$, $\lambda_2$, $\lambda_1\lambda_2$, $\lambda_3$, 
$\lambda_1\lambda_3$, $\lambda_2\lambda_3$, $\lambda_1\lambda_2\lambda_3$.
This process will be called interior reparametrization with factor $(\lambda_1,\lambda_2,\lambda_3)$  of the DC cube or system.
\end{remark}

\section{Spherical Dupin cyclidic systems}
\label{sec:spherical}
A DC system is called \emph{spherical} if at least one of its families of surfaces consists 
of M-spheres. 
It is useful to understand how the coordinate lines on the M-spheres behave. 

\subsection{Two dimensional DC systems}
\label{subsec:2d-DC}

Using M-circles as coordinate lines, there are two classical orthogonal coordinates on the plane: Cartesian and polar systems. The Cartesian coordinates have a pole (or singularity) on infinity, and the polar coordinate has two poles (one on infinity). More examples can be obtained by applying \M transformations. We call those two types of coordinates {\em $1$-polar} and {\em $2$-polar} coordinates on an M-sphere, depending on the number of poles.

\begin{lemma}
\label{lem:2d-dc}
A $2$-dimensional DC system is \M equivalent to a $1$-polar or a $2$-polar coordinate system.
\end{lemma}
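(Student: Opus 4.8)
The plan is to reduce the statement to a classical fact about orthogonal pencils of M-circles. Up to \M equivalence (Definition~\ref{def:M-equiv}) we may post-compose with a \M transformation carrying the image M-sphere onto the plane $z=0$, identified with $\widehat{\C}$; so assume the $2$-dimensional DC system is a bilinear QB formula $P(s,t)$ parametrizing $\widehat{\C}$. Fixing $t$ gives the $s$-circle $C_t:s\mapsto P(s,t)$ and fixing $s$ the $t$-circle $D_s$; these are honest M-circles, being restrictions of the bilinear map to a coordinate line, i.e.\ fractional-linear quaternionic maps. Such a map has nowhere-vanishing derivative unless it is constant, so each coordinate curve is either a genuine M-circle or collapses to a point; the latter happens exactly at the \emph{poles} of the system (the images of the degenerate coordinate circles). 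The orthogonality condition $\partial_sP\perp\partial_tP$ says precisely that $C_t\perp D_s$ for all $s,t$.

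First I would pass to the space of M-circles in $\widehat{\C}$. Writing a circle as $A(x^2+y^2)+Bx+Cy+D=0$ gives a point $[A:B:C:D]\in\R P^3$, carrying a symmetric bilinear form of signature $(3,1)$ whose isotropic points are exactly the point-circles (the points of $\widehat{\C}$) and for which two circles are orthogonal iff the form vanishes on them. The relations $C_t\perp D_s$ for all $s,t$ are then equivalent to $V_D\subseteq V_C^{\perp}$, where $V_C,V_D\subseteq\R^4$ are the linear spans of the two families. Since each is a genuine one-parameter family, $\dim V_C,\dim V_D\ge 2$; and if $\dim V_C\ge 3$ then $\dim V_C^{\perp}\le 1$, forcing $\{D_s\}$ to be a single circle, a contradiction. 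Hence $\dim V_C=\dim V_D=2$: both families are pencils (coaxial systems) and $V_D=V_C^{\perp}$ exactly.

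Next I would classify the orthogonal pair $\{V_C,V_C^{\perp}\}$ by the signatures of the form restricted to the two $2$-planes. Since the total signature is $(3,1)$ and there is only one negative direction, the restricted signatures must be, up to order, either $\{(2,0),(1,1)\}$ or both degenerate with a shared isotropic radical; configurations such as $(0,2)$ or the pair $\{(1,1),(1,1)\}$ are impossible. In the first case the $(1,1)$ plane meets the isotropic quadric in two real points (two limiting point-circles), so the system has two poles; here I would send those two poles to $0$ and $\infty$ by a \M transformation, turning the one pencil into concentric circles and its orthogonal pencil into radial lines, i.e.\ the $2$-polar (polar) system. In the degenerate case the shared radical is a single common isotropic point, so the system has one pole; sending it to $\infty$ turns both tangent pencils into mutually orthogonal families of parallel lines, i.e.\ the $1$-polar (Cartesian) system. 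It remains to absorb the residual freedom in the bilinear parametrization by the projective reparametrizations $\rho\in\mathrm{Aut}((\R P^1)^2)$ permitted in Definition~\ref{def:M-equiv}, which is routine once the coordinate-circle families coincide with the standard ones.

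The main obstacle is the dimension count of the second paragraph: making precise that orthogonality to an entire one-parameter family forces each family to be a pencil, and in particular excluding the degenerate spanning configurations. A secondary point needing care is identifying the poles of the DC system with the real isotropic points occurring in the two pencils, so that the pole count — and hence the dichotomy between the $1$-polar and $2$-polar types — is genuinely invariant under \M transformations.
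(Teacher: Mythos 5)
Your proposal is correct, but it takes a genuinely different route from the paper's. The paper argues directly on the M-sphere: it picks two M-circles from one coordinate family, distinguishes the three possible mutual positions (disjoint, intersecting, tangent), normalizes each by a \M transformation to concentric circles, concurrent lines, or parallel lines, and notes that orthogonality then determines the rest of both families (Figure~\ref{fig:2Dcase}); the first two positions give the $2$-polar system and the third the $1$-polar one. You instead linearize the problem in the space of circles $\R P^3$ with its signature-$(3,1)$ form: the orthogonality relation forces the spans $V_C$ and $V_D$ to be mutually perpendicular $2$-planes, so each family is a coaxial pencil, and the signature dichotomy $\{(2,0),(1,1)\}$ versus the shared-radical degenerate pair reproduces exactly the $2$-polar/$1$-polar alternative, with the poles appearing as the real isotropic points of the pencils. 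What your argument buys is an actual proof that each family is a pencil (the paper treats this as geometrically evident from the orthogonal construction) together with a manifestly \M invariant count of the poles; what the paper's version buys is brevity and an immediate construction of the normal forms. The two points you flag yourself are exactly the ones to spell out in a finished write-up: that nonvanishing of the Jacobian at some point is what guarantees the families are non-constant, hence $\dim V_C,\dim V_D\ge 2$; and that the map $t\mapsto C_t$ into the pencil is a projective parametrization of $\R P^1$, so the residual freedom is indeed absorbed by the reparametrizations $\rho$ allowed in Definition~\ref{def:M-equiv}. Neither is a gap in substance, and both are handled at the same level of detail as in the paper's own proof.
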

\begin{proof}
Consider one family of coordinate lines that are M-circles on an M-sphere. Depending on their intersections, there are 3 possibilities to position 2 M-circles from the family. 
Under \M transformations, we map the case of 2 non-intersecting M-circles to 2 concentric circles, 2 intersecting M-circles to 2 intersecting lines, and 2 touching M-circles to 2 parallel lines on a plane. 
From the orthogonality condition, the other coordinate lines are uniquely constructed to fill $\R^2$; see Figure~\ref{fig:2Dcase}.
\end{proof}

\begin{figure}[H]
    \centering
    \begin{subfigure}[b]{0.3\textwidth}
        \centering
        \includegraphics[width=\textwidth]{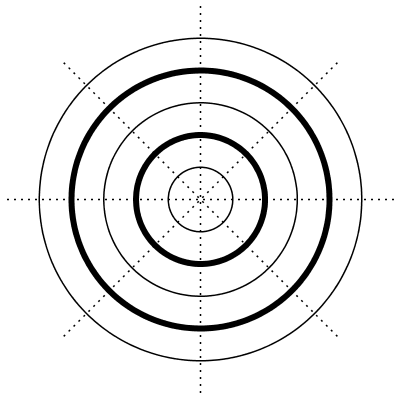}
        \caption{2 concentric circles (2-polar)}
    \end{subfigure}
    \hfill
    \begin{subfigure}[b]{0.3\textwidth}
        \centering
        \includegraphics[width=\textwidth]{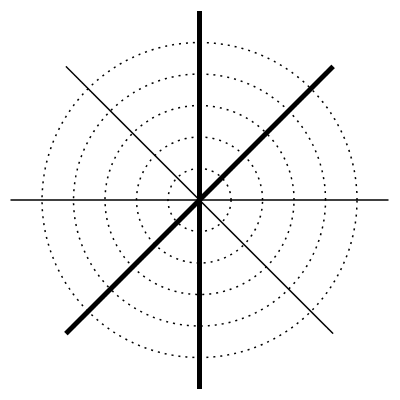}
        \caption{2 intersecting lines (2-polar)}
    \end{subfigure}
    \hfill
    \begin{subfigure}[b]{0.3\textwidth}
        \centering
        \includegraphics[width=\textwidth]{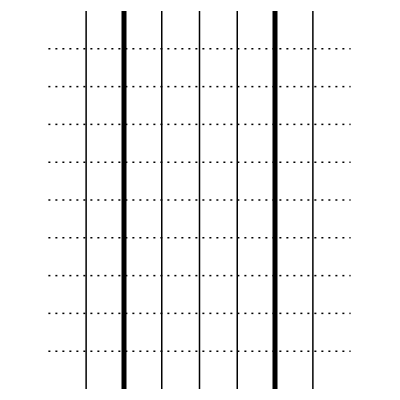}
        \caption{2 parallel lines (1-polar)}
    \end{subfigure}
    \caption{The 2 initial M-circles are in bold, followed by solid M-circles to fulfill the family. The dotted M-circles form the orthogonal family.}
    \label{fig:2Dcase}
\end{figure}

\begin{example}\rm 
\label{exa:bipolar}
Assume that on the $xy$-plane, the $x$-axis and $y$-axis are already coordinate lines of a $2$-dimensional DC system. We choose the control points $p_0=0$, $p_1=\ii$, $p_2=\jj$, and the fourth control point $p_3$ is on the circle define by those points, namely
\[
p_3 = \frac{1+a}{1+a^2}\ii + \frac{1-a}{1+a^2}\jj, \quad a\in \R.
\]
The corresponding weights are computed using Theorem $\ref{th:Du-control-points}$, giving us the homogeneous control points
 \begin{equation}
    \begin{pmatrix}u_i \\ w_i \end{pmatrix}_{i=0,\ldots,3} =
    \begin{pmatrix}
    0 &  \ii & \jj & \ii+\jj \\ 
    1 &  1 & 1 & 1-a \kk
    \end{pmatrix}.
\end{equation}
The associated QB parametrization is $F(s,t)=(s\ii+t\jj)(1-ast\kk)^{-1}$. This construction is illustrated in Figure $\ref{fig:2-polar-3P}(a)$.
The map $F$ is a double covering of the $xy$-plane, where the two preimages are related by the involution $(s,t)\mapsto (1/as,-1/at)$ on $(\R P^1)^2$.
Note that $a=0$ corresponds to the Cartesian coordinate system, and its inverse is shown in Figure $\ref{fig:2-polar-3P}(b)$, and defined by the homogeneous control points
 \begin{equation}
 \label{eq:1-polar}
    \begin{pmatrix}u_i \\ w_i \end{pmatrix}_{i=0,\ldots,3} =
    \begin{pmatrix}
    0 &  \ii & 0 & \ii \\ 
    1 &  1 & 1 & 1 +\kk
    \end{pmatrix}.
\end{equation}
To compute the weights of such a representation, one can 
apply Theorem~\ref{th:Du-control-points}(ii) for the permuted 
indices $(0,1,2,3) \mapsto (1,0,3,2)$ of the control points. 
\end{example}

\begin{figure}[H]
    \centering
    \begin{subfigure}[b]{0.4\textwidth}
        \centering
        \includegraphics[width=\textwidth,height=0.9\textwidth]{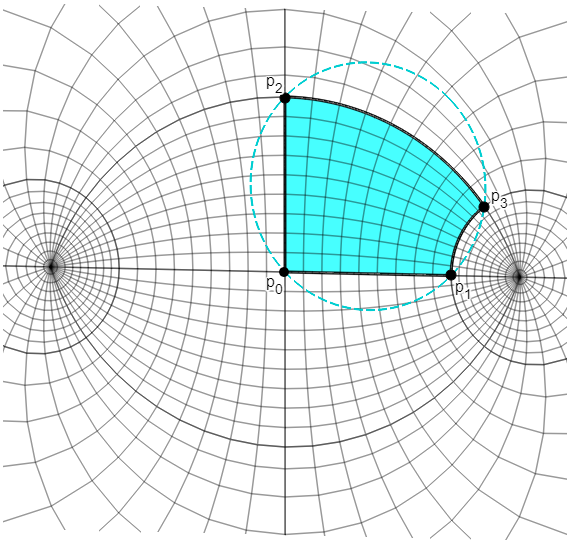}
        \caption{2-polar coordinate system on the $xy$-plane with $a=-1/2$ in Example \ref{exa:bipolar}.}
    \end{subfigure}
    \hskip1cm
    \begin{subfigure}[b]{0.4\textwidth}
        \centering
        \includegraphics[width=0.9\textwidth,height=0.9\textwidth]{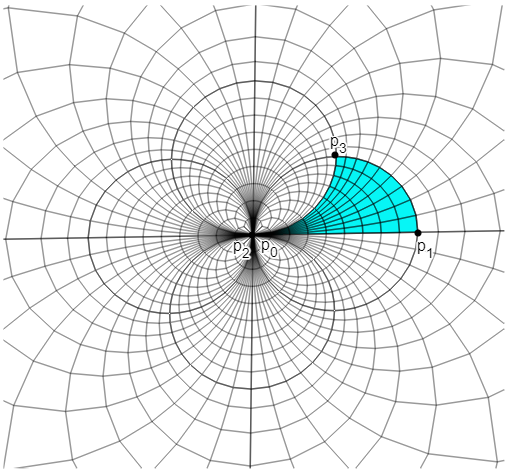}
        \caption{1-polar coordinate system, inverse of the Cartesian coordinate system at $a=0$.}
    \end{subfigure}
    \caption{Construction of a 2-dimensional DC system using control points.}
    \label{fig:2-polar-3P}
\end{figure}

\subsection{Spherical DC systems}

There are two natural ways to construct a spherical DC system from a 2-dimensional DC system, namely the axial and offset based constructions on the M-sphere. 
The axial case considers the rotation of a 2-dimensional DC system on a plane about a (straight) line on that plane. Assume that a DC patch on the plane is defined by homogeneous control points $(u_i,w_i)$, $i=0,1,2,3$. Then we can build an axial DC cube by left-multiplying the homogeneous control points with the direction of the line. More precisely, if $n$ is the direction of the line, then the additional control points are given by
$(u_{i+4},w_{i+4})=(nu_i,nw_i)$, $i=0,1,2,3$.

The offset based construction applies to any Dupin cyclides as we will describe in Section \ref{sec:offsets}. A DC cube is naturally constructed from a DC patch by taking offsets along normal directions at a fixed distance. The control points of the cube is obtained by offsetting the vertices of the DC patch and using the same weights as described in Lemma \ref{lem:offset-construction}.

\begin{theorem}
\label{th:spherical}
Any spherical DC system is \M equivalent to a DC system obtained from an axial or offset construction based on an M-sphere. They are classified as following:
\begin{itemize}
\item[$(S1)$] Offset construction based on a sphere. There is a one parameter family of \M classes where the singular locus is $2$ intersecting lines, and two limit cases where the singular locus is a double line exist.

\item[$(S2)$] Offset construction based on a plane. Four cases distinguished by the singular locus: $2$ parallel lines, a double line, a line or just a point.

\item[$(S3)$] Axial construction from $2$-polar system. There is one parameter family of \M classes where the singular locus consists of a double line and $2$ circles. Two limit cases where the singular locus is: a double line or a double line and a double circle.

\item[$(S4)$]  Axial construction from $1$-polar system. Two cases distinguished by the singular locus: a double line or a double line and a double circle.
\end{itemize}
\end{theorem}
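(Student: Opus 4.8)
The plan is to reduce every spherical DC system, via \M equivalence, to one of two explicit normal forms and then read off the singular locus directly from the control points. After permuting the three factors of $(\R P^1)^3$ I may assume the M-sphere family is $\{F_{**u}\}$, which by Remark~\ref{rem:DCSphereCond} is the identity $\sigma_3(u)\equiv 0$; expanding it through \eqref{spherical-cond} gives polynomial relations among the control points $(u_{ijk},w_{ijk})$. On any single M-sphere $F_{**u_0}$ the $s$- and $t$-coordinate lines form a $2$-dimensional DC system, so Lemma~\ref{lem:2d-dc} normalizes it — after a \M transformation of $\widehat{\R}^3$, which preserves the whole DC structure — to the $1$-polar or $2$-polar model, fixing the poles on the M-sphere.

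\textbf{The offset/axial dichotomy.} The $u$-coordinate lines are M-circles orthogonal to every M-sphere $F_{**u}$ they meet (orthogonality in Definition~\ref{def:DC}); hence the M-sphere family is a one-parameter family of M-spheres admitting an orthogonal congruence of M-circles. Such a family is either coaxial-concentric, in which case the orthogonal trajectories are the straight normals and $F$ is realized by the offset construction of Section~\ref{sec:offsets} with control points from Lemma~\ref{lem:offset-construction}; or it is a pencil through a fixed axis, in which case the trajectories are rotation circles and the planar meridian system is swept by the axial recipe $(u_{i+4},w_{i+4})=(n u_i, n w_i)$. Normalizing the base M-sphere to a sphere or a plane splits the offset case into $(S1)$ and $(S2)$, while the meridian $2$-dimensional system being $2$-polar or $1$-polar splits the axial case into $(S3)$ and $(S4)$.

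\textbf{Singular loci.} For each normal form I write the eight control points explicitly and compute $\mathrm{Jac}(F)$ from $F=UW^{-1}$; by Lemma~\ref{lem:DC-systems}(ii) it suffices to solve $\partial_s F=0$, $\partial_t F=0$, $\partial_u F=0$ separately, and each vanishing locus maps to an M-circle by part~(i). In the offset families the poles of the $2$-dimensional system are carried along the straight normals into lines: two concurrent lines through the common centre for $(S1)$, collapsing to a double line in the two symmetric limits, and parallel lines for $(S2)$, degenerating to a double line, a single line, or a point as the pole data merges. In the axial families the fixed axis is a double line and the rotated poles trace the two circles of $(S3)$, which coincide into a double circle, or vanish leaving only the axis, in the two limits; the $1$-polar meridian of $(S4)$ has a single pole, producing only the double line or the double line together with a double circle. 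The surviving modulus — the pole separation, equivalently the parameter $a$ of Example~\ref{exa:bipolar} — parametrizes the one-parameter families of \M classes.

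The step I expect to be hardest is the exhaustiveness of the dichotomy: proving that the orthogonal congruence of a $3$-linear QB sphere family admits no possibility beyond the coaxial-concentric and the axial pencil, so that no spherical DC system escapes both constructions. I would close this by pinning each $u$-coordinate line through its two endpoints on consecutive M-spheres together with one Farin point (Definition~\ref{def:Farin-point}), which leaves exactly the two compatible ways of stacking the planar system. A secondary difficulty is the bookkeeping of the degenerate subcases in $(S2)$--$(S4)$, where circles shrink to points or merge into double loci.
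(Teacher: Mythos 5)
Your proposal follows essentially the same route as the paper: M\"obius-normalize the one-parameter family of M-spheres, reduce to the offset or axial construction, refine by the $1$-polar/$2$-polar choice on an M-sphere, and read the singular loci off explicit control points of the resulting normal forms. The exhaustiveness of the offset/axial dichotomy, which you flag as the hardest step, is settled in the paper exactly as in Lemma~\ref{lem:2d-dc}: two M-spheres of the family are M\"obius-equivalent to concentric spheres, parallel planes, or intersecting planes, after which the orthogonal M-circles (and hence the whole system) are uniquely determined, giving $(S1)$, $(S2)$, and the axial cases respectively.
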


\begin{proof}
Depending on the intersection of the 2 M-spheres from the same family, a \M transformation in $\R^3$ maps them to either concentric spheres, intersecting planes, or parallel planes. The unique choice of positioning M-circles orthogonal to the given 2 M-spheres (as in the earlier 2-dimensional construction) clarifies the reduction to axial or offset based construction. The classification is based on the choice of 1-polar or 2-polar coordinate system on the considered M-sphere. 

We parametrize generic 2-polar coordinates on the unit sphere using one parameter $a$. The offset based construction has homogeneous representation
\[
\begin{pmatrix}
\ii & \ii+\jj & \ii+\kk & a+\ii+\jj+\kk & -\ii & -\ii-\jj & -\ii-\kk & -a-\ii-\jj-\kk\\
1 & 1+\kk & 1-\jj & 1-a\ii-\jj+\kk & 1 & 1+\kk & 1-\jj & 1-a\ii-\jj+\kk
\end{pmatrix}.
\]
The system is indeed spherical, since the spherical condition $\sigma_3(u)$ (see Remark \ref{rem:DCSphereCond} is identically zero.
For an arbitrary value of $a$, we obtain two intersecting lines as singularities of the system, see Figure~\ref{fig:offset-spheres}(a).
The cases $a=1$ and $a=0$ correspond to the classical spherical system and the 1-polar system respectively. They cover the case (S1).

One can introduce similarly the other choices of 1-polar or 2-polar system and the offset or axial based construction, and obtained the properties of singularities of the system. Those systems are described by (S2), (S3) and (S4) and illustrated by Figures~\ref{fig:offset-planes}, \ref{fig:axial-non-sym} and \ref{fig:axial-1-polars}.
For example, in the case of offset of the 1-polar system defined by \eqref{eq:1-polar}, the homogeneous control points are given by
\begin{equation}
\label{eq:offset-1-polar}   
\begin{pmatrix}
 0 & \ii & 0 & \ii & \kk & \ii + \kk & \kk & -1 + \ii + \kk \vspace{0.1cm} \\ 
1 & 1 & 1 & 1+\kk & 1 & 1 & 1  & 1 + \kk
\end{pmatrix}.
\end{equation}
This is illustrated by Figure~\ref{fig:offset-planes}(c).

\end{proof}

\begin{figure}[H]
    \centering
    \begin{subfigure}[b]{0.3\textwidth}
        \centering
        \includegraphics[width=\textwidth]{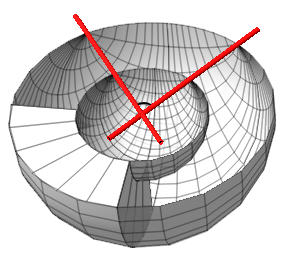}
        \caption{Offset of a generic 2-polar system on a sphere.}
    \end{subfigure}
    \hfill
    \begin{subfigure}[b]{0.32\textwidth}
        \centering
        \includegraphics[width=\textwidth]{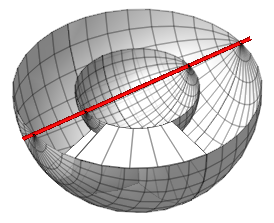}
        \caption{The 2 poles and the sphere's center are aligned.}
    \end{subfigure}
    \hfill
    \begin{subfigure}[b]{0.3\textwidth}
        \centering
        \includegraphics[width=\textwidth]{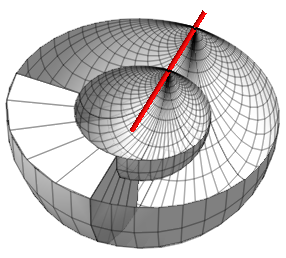}
        \caption{Offset of a 1-polar system on a sphere.}
    \end{subfigure}
    \caption{Spherical DC systems and their singularities (in red) based on sphere offsets.}
    \label{fig:offset-spheres}
\end{figure}

\begin{figure}[H]
    \centering
    \begin{subfigure}[b]{0.3\textwidth}
        \centering
        \includegraphics[width=1.2\textwidth]{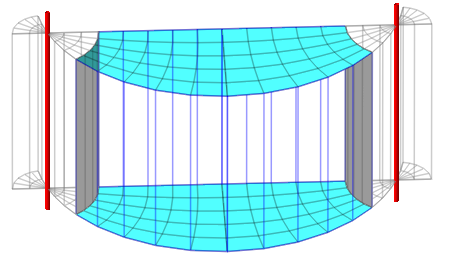}
        \caption{Offset of a generic 2-polar system.}
    \end{subfigure}
    \hfill
    \begin{subfigure}[b]{0.3\textwidth}
        \centering
        \includegraphics[width=0.8\textwidth,height=0.7\textwidth]{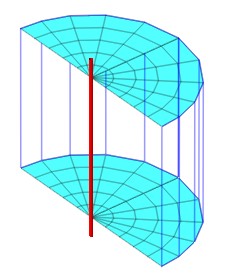}
        \caption{Offset of a 2-polar with one pole on infinity.}
    \end{subfigure}
    \hfill
    \begin{subfigure}[b]{0.3\textwidth}
        \centering
        \includegraphics[width=\textwidth]{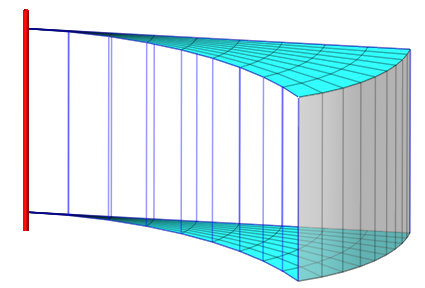}
        \caption{Offset of a 1-polar system.}
    \end{subfigure}
    \caption{Spherical DC systems and their singular curves based on offsets of a plane.}
    \label{fig:offset-planes}
\end{figure}

\begin{figure}[H]
    \centering
    \begin{subfigure}[b]{0.45\textwidth}
        \centering
        \includegraphics[width=\textwidth]{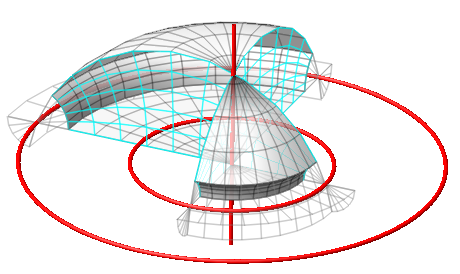}
        \caption{Rotating a 2-polar system on a plane about a generic axis on the plane. This represents all \M classes of all such systems.}
    \end{subfigure}
    \hfill
    \begin{subfigure}[b]{0.45\textwidth}
        \centering
        \includegraphics[width=\textwidth]{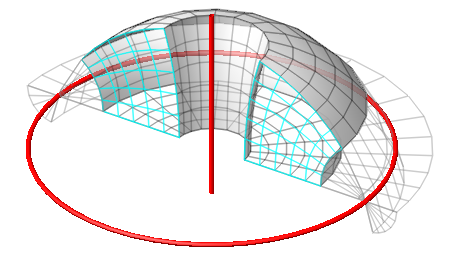}
        \caption{Rotating a 2-polar system on a plane about an axis on the plane, where the two poles are placed symmetrically with respect to the axis.}
    \end{subfigure}
    \caption{Spherical DC systems obtained by axial-based construction of a 2-polar system on a plane. Their singularities are concentric circles and a double line.}
    \label{fig:axial-non-sym}
\end{figure}

\begin{figure}[H]
    \centering
    \begin{subfigure}[b]{0.42\textwidth}
        \centering
        \includegraphics[width=\textwidth]{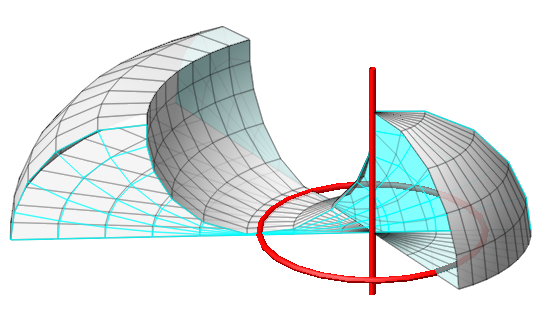}
        \caption{Rotating a 1-polar system about an axis not passing through the pole.}
    \end{subfigure}
    \hfill
    \begin{subfigure}[b]{0.42\textwidth}
        \centering
        \includegraphics[width=\textwidth]{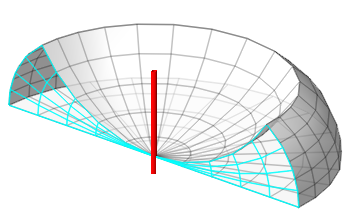}
        \caption{Rotating a 1-polar system about an axis passing through the pole.}
    \end{subfigure}
    \caption{Spherical DC systems obtained by axial based construction of a 1-polar system on a plane. Their singularities are a double circle and a double line. The circle degenerates to a point in the case (b).}
    \label{fig:axial-1-polars}
\end{figure}


\section{Non-spherical offset Dupin cyclidic systems}
\label{sec:offsets}

A DC system is called {\em offset DC system} if it is M\"obius equivalent to the DC system obtained by offsetting of a Dupin cyclide. 
To construct an offset cube from a Dupin cyclide, we use the following result.

\begin{lemma}
 \label{lem:offset-construction}  
 Let a Dupin cyclide be defined by $4$ control points $p_i$, $i=0,1,2,3$, 
and orthogonal tangent vectors $v_1$ and $v_2$ at $p_0$. 
Let $w_i$ be the corresponding weight computed using Theorem $\ref{th:Du-control-points}$.
The normal at $p_0$ is $n_0=v_1v_2$. 
The normal at $p_i$ is $n_i=w_in_0w_i^{-1}$, $i=1,2,3$.
The offset at a fixed distance $d$ is defined by $4$ offsetted control points $p_{i+4} = p_i-d\,n_i$ and the same weights $w_{i+4}=w_i$, for each $i=0,1,2,3$. 
They naturally define DC cubes/systems.
\end{lemma}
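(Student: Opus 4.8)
The plan is to reduce the whole statement to a single \emph{Gauss map identity}: for the bilinear QB patch $P = UW^{-1}$ parametrizing the Dupin cyclide with the weights of Theorem~\ref{th:Du-control-points}, the field
\[
N(s,t) = W(s,t)\,n_0\,W(s,t)^{-1}, \qquad n_0 = v_1v_2,
\]
is its unit normal. First I would record that $N$ is automatically a unit imaginary quaternion: conjugation by $W$ (equivalently by the unit quaternion $W/|W|$) is a rotation of $\im\H$, so $\overline{N} = -N$ and $|N| = |n_0| = |v_1||v_2| = 1$. Evaluating $N$ at the four corners $(s,t)\in\{0,1\}^2$, where $W$ takes the values $w_0,\dots,w_3$, then yields the corner normals $n_i = w_in_0w_i^{-1}$ of the statement; in particular every $n_i$ is again a genuine unit vector, so displacing all corners by the same $d$ is geometrically consistent.

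Granting the identity, the offset construction falls out algebraically. Because $n_i = w_in_0w_i^{-1}$, I obtain the key summation
\[
\sum_i n_iw_iB_i = \sum_i w_in_0B_i = \Big(\sum_i w_iB_i\Big)n_0 = Wn_0,
\]
so the numerator built from the offset control points $p_i - dn_i$ and the unchanged weights is $\sum_i(p_i-dn_i)w_iB_i = U - dWn_0$, and hence the offset patch equals $(U - dWn_0)W^{-1} = P - dWn_0W^{-1} = P - dN$, i.e.\ the genuine Euclidean offset at distance $d$. Stacking the original patch over its offset with equal weights keeps $W$ independent of $u$ and makes the numerator affine in $u$, so the cube is $F(s,t,u) = P(s,t) - du\,N(s,t)$, whose $u$-lines are straight lines in the direction $N$.

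To see that $F$ is a DC system I must check that the three partials are mutually orthogonal. Since $|N|\equiv 1$ we have $\langle\partial_sN,N\rangle = \langle\partial_tN,N\rangle = 0$, whence $\langle\partial_sF,\partial_uF\rangle = -d\langle\partial_sP,N\rangle$ and likewise in $t$; these vanish exactly by the Gauss map identity, giving $\partial_uF\perp\partial_sF,\partial_tF$. For $\partial_sF\perp\partial_tF$ I would invoke that the parameter net of a principal Dupin cyclide patch is its curvature-line net (Section~\ref{subsec:DuC}) and that parallel surfaces share principal directions: along these directions $\partial_sF$ and $\partial_tF$ are scalar (Weingarten) multiples of $\partial_sP$ and $\partial_tP$, hence remain orthogonal. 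Finally $\mathrm{Jac}(F)\neq 0$ at a generic point for $d\neq 0$, since there $\partial_sP,\partial_tP$ are independent and $N$ is transverse to them. Thus $F$ is a DC system.

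The main obstacle is the Gauss map identity itself, i.e.\ proving $\langle\partial_sP,N\rangle = \langle\partial_tP,N\rangle = 0$. Writing $\partial_sP = A_sW^{-1}$ with $A_s = \partial_sU - P\,\partial_sW$ and using invariance of $\re$ under conjugation, this reduces to the scalar identity $\re(n_0W^{-1}A_s) = 0$ (and its $t$-analogue), equivalently to the geometric assertion that the conjugated tangents $W^{-1}(\partial_sP)W$ and $W^{-1}(\partial_tP)W$ lie in the fixed plane $n_0^{\perp}$ for all $(s,t)$. I see two routes. The first is a reduction to canonical form: by Theorem~\ref{th:Du-control-points} every principal patch is, up to a \M map, the explicit parabolic cyclide computed there, for which the identity is checked directly; one then transports it back, using that a conformal map rotates unit normals by precisely the quaternionic conjugation encoded in the coefficient transformation of the Remark after \eqref{proj-div}. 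The second is a direct expansion of $\re(n_0W^{-1}A_s)$ in the bilinear Bernstein basis, where the orthogonality $\partial_sP\perp\partial_tP$ of Definition~\ref{def:DC-patch} together with the weight relations of Theorem~\ref{th:Du-control-points} should force the real part to vanish identically. I expect the first route to be shorter, the delicate point being the \M equivariance of $N = Wn_0W^{-1}$ rather than any single computation.
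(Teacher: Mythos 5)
Your proposal is a genuinely self-contained route, whereas the paper disposes of this lemma with a one-line citation to \cite[Corollary~6.2]{ZubeKras2015}; so the two cannot really be compared step by step. Your algebraic skeleton is correct and clean: from $n_iw_i = w_in_0$ one does get $\sum_i n_iw_iB_i = Wn_0$, hence the offset patch is $(U-dWn_0)W^{-1} = P - dN$ with $N = Wn_0W^{-1}$, the trilinear cube collapses to $F = P - du\,N$ because $W$ is $u$-independent, and the orthogonality checks ($|N|\equiv 1$ giving $\partial_sN\perp N$, and Rodrigues' formula along curvature lines giving $\partial_sF\parallel\partial_sP$) are all sound, granted the central identity.

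The genuine gap is that this central identity --- that $N = Wn_0W^{-1}$ is the unit normal field of the patch, equivalently $\re(n_0W^{-1}(\partial_sU - P\,\partial_sW)) = \re(n_0W^{-1}(\partial_tU - P\,\partial_tW)) = 0$ --- is never established. You correctly isolate it as ``the main obstacle,'' but then only name two candidate strategies and express a preference, without executing either. Note that everything else in your argument is routine once this identity is granted, so the identity \emph{is} the lemma: it is essentially the normal-field formula proved in \cite{ZubeKras2015} that the paper cites. As written, your proof therefore reduces the statement to the very result the authors invoke, rather than proving it. The identity is true (one can check it at the corners, e.g.\ $w_1n_0w_1^{-1}$ does give the correct sphere normal for the weights of Theorem~\ref{th:Du-control-points}(ii)), and your first route --- reduce to the canonical parabolic cyclide of Theorem~\ref{th:Du-control-points}(i) and verify \M equivariance of $Wn_0W^{-1}$ under the control-point transformation $u_i' = qu_i-(r^2+q^2)w_i$, $w_i' = u_i - qw_i$ --- would work, but the equivariance computation is exactly the nontrivial content and must actually be carried out. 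A smaller point: you implicitly need $|v_1| = |v_2| = 1$ for $n_0$ and hence all $n_i$ to be unit vectors; this is tacit in the lemma (otherwise ``offset at distance $d$'' is misnormalized) but should be stated.
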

\begin{proof}
See e.g. \cite[Corollary 6.2]{ZubeKras2015}.
\end{proof}

\begin{lemma}\label{lem:offset-condition}
A DC system is an offset DC system if and only if at least one of its surfaces degenerates to a point.
\end{lemma}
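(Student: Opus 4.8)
The plan is to prove both implications by translating the geometric notion of an offset into the algebraic structure of the weights, using the point at infinity as the distinguished degenerate surface. Throughout, note that both properties in the statement are invariant under \M equivalence: being an offset DC system is defined through \M equivalence, and "some coordinate surface is a single point" is preserved because a \M transformation sends points to points and coordinate surfaces to coordinate surfaces (Definition~\ref{def:M-equiv}).

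For the forward implication, we may therefore assume $F$ is itself built as in Lemma~\ref{lem:offset-construction}, with the offset taken in the $u$-direction. The defining feature of that construction is $w_{i+4}=w_i$: the weights are unchanged between the base face $u=0$ and the offset face $u=1$. Hence the denominator $W(s,t,u)=(1-u)\,W(s,t,0)+u\,W(s,t,1)$ does not depend on $u$ at all, so $F=UW^{-1}$ is affine-linear in $u$, namely $F(s,t,u)=(1-u)\,F(s,t,0)+u\,F(s,t,1)$, where $F(\cdot,\cdot,0)$ and $F(\cdot,\cdot,1)$ are the base cyclide patch and its offset. These two surfaces are distinct, since the offset distance $d$ and the normals $n_i$ are nonzero, so letting $u\to\infty$ forces $F(s,t,u)\to\infty$ for every $(s,t)$. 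Thus the coordinate surface $F_{**\infty}$ collapses to the single point $\infty$. Geometrically this is exactly the statement that the $u$-coordinate lines are the straight normal lines of the cyclide, all of which pass through $\infty$. By the invariance noted above, every offset DC system has a surface degenerating to a point.

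For the converse, assume some coordinate surface of $F$ is a single point; after relabelling the three directions by an automorphism $\rho$ (Definition~\ref{def:M-equiv}) we may take it to be a $u$-surface, say $F_{**u_0}=\{p^*\}$. Apply a \M transformation $\mu$ sending $p^*$ to $\infty$ and replace $F$ by $\mu\circ F$. Then $F(s,t,u_0)=\infty$ for all $s,t$, so each $u$-coordinate line $u\mapsto F(s,t,u)$ is an M-circle through $\infty$, i.e. a straight line, and these lines fill a two-parameter congruence of straight lines. By the orthogonality built into Definition~\ref{def:DC}, $\partial_u F$ is orthogonal to $\partial_s F$ and $\partial_t F$, so each such line is normal to every surface $F_{**u}$. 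A one-parameter family of surfaces sharing a common congruence of straight normal lines is precisely a family of parallel (offset) surfaces; since each $F_{**u}$ is a bilinear DC patch, hence a principal Dupin cyclide patch, the family consists of the offsets of the single Dupin cyclide $F_{**u_1}$. Up to the $u$-reparametrization allowed by $\rho$, this exhibits $F$ as the offset construction of Lemma~\ref{lem:offset-construction}, so the original system is \M equivalent to an offset DC system.

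I expect the converse to carry the real weight of the argument. The forward direction is essentially the one-line observation that constant weights in the offset direction make $F$ affine-linear there, with $\infty$ as the degenerate surface. In the converse the delicate points are: (i) justifying rigorously that a congruence of common straight normals forces the orthogonal surfaces to be genuine offsets in the sense of Lemma~\ref{lem:offset-construction}, matching its weight normalization after a $u$-reparametrization; and (ii) checking that the representative $F_{**u_1}$ is a nondegenerate Dupin cyclide patch, so that the offset construction actually applies. Degenerate sub-cases, where a second coordinate surface is also a single point (as happens for a sphere, whose radial normals meet both at the center and at $\infty$), must be seen not to obstruct the conclusion.
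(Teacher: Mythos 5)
Your proposal is correct and follows essentially the same route as the paper's proof: offsets give straight normal coordinate lines meeting at infinity (hence a point-surface there), and conversely inverting the degenerate point to $\infty$ turns the orthogonal family of coordinate lines into a normal congruence of straight lines, exhibiting the system as an offset family. Your forward direction is just a more explicit, weight-level rendering (constant weights in the offset direction) of the paper's one-line geometric observation, and the standard fact you invoke about common straight normals forcing parallel surfaces is exactly the "characteristic property" the paper appeals to.
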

\begin{proof}
Note that one family of coordinate lines of a system, obtained by offsetting a Dupin cyclide, is composed of straight lines that meet at infinity. The infinity is, of course, a degenerate coordinate surface of this DC system. Conversely, if one coordinate surface degenerates to one point, then all coordinate lines of the orthogonal family pass through this point. The inversion with the center at that point will map these coordinate lines to straight lines, which is the characteristic property of an offset DC system.
\end{proof}

\begin{theorem}
Any non-spherical offset DC system has exactly $2$ M-spheres in different families. 
They can be reduced by M\"obius transformations to the canonical form where the singular locus is one of the following:
\begin{itemize}
\item[$(O1)$] focal ellipse and hyperbola on orthogonal planes;
\item[$(O2)$] focal parabolas on orthogonal planes.
\end{itemize}  
\end{theorem}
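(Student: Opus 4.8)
The plan is to reduce any non-spherical offset DC system to the offset of a single Dupin cyclide in canonical position, and then read off its M-spheres and singular locus from that model. By Lemma~\ref{lem:offset-condition}, the system is \M equivalent to one produced by the offset construction of Lemma~\ref{lem:offset-construction} applied to some Dupin cyclide $D_0$. Taking the offset parameter to be $u$, the $u$-surfaces are the parallel cyclides of $D_0$, while the $s$- and $t$-surfaces are ruled by the surface normals of $D_0$, since by construction the edges in the $u$-direction are segments of these normals. Thus the two transversal families are swept out by the \emph{normal congruence} of $D_0$, and the whole argument splits into three tasks: counting the M-spheres, pinning down the canonical form, and computing the singular locus.

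First I would count the M-spheres using Remark~\ref{rem:DCSphereCond}. The functions $\sigma_1(s)$, $\sigma_2(t)$, $\sigma_3(u)$ are computed from the homogeneous control points obtained by offsetting (Lemma~\ref{lem:offset-construction}) a cyclide produced by Theorem~\ref{th:Du-control-points}; each $\sigma_i$ is a quadratic in its parameter because the slice control points are linear in the slicing variable and $\S$ is bilinear. Since the offsets of a non-spherical Dupin cyclide are again non-spherical cyclides, I expect $\sigma_3(u)$ to have no admissible root, so the offset family carries no M-sphere. By contrast, each transversal family should carry exactly one: geometrically, a symmetry plane of $D_0$ (a plane carrying one of its focal conics) appears as a coordinate surface because a principal circle lying in such a plane has all its normals in that plane, so offsetting it sweeps out part of the plane itself, and a plane is an M-sphere. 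These two occur in the $s$- and $t$-families respectively, giving the asserted pair in two different families; that there are no further ones I would confirm by checking that $\sigma_1$ and $\sigma_2$ each contribute a single coordinate sphere while $\sigma_3$ never vanishes.

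Next I would normalize. Because the two M-spheres belong to different families they meet orthogonally, so a \M transformation carries them to two perpendicular coordinate planes. This forces $D_0$ into the symmetric position in which its two focal conics lie in those coordinate planes, and the classical theory of Dupin cyclides then gives exactly two non-spherical possibilities up to \M equivalence: a perpendicular ellipse and hyperbola (ring, horn, or spindle cyclides), or two perpendicular parabolas (the parabolic cyclide $F_P$ appearing in the proof of Theorem~\ref{th:Du-control-points}, cf.~\cite{chandru1989geometry}). These yield the canonical forms $(O1)$ and $(O2)$; the non-spherical hypothesis is precisely what excludes the degeneration of $D_0$ to an M-sphere treated in Section~\ref{sec:spherical}.

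Finally I would identify the singular locus. By Lemma~\ref{lem:DC-systems}, $\sing(F)=\sing_1(F)\cup\sing_2(F)\cup\sing_3(F)$, and along the normal congruence the Jacobian degenerates exactly on the focal set (caustic) of the normals of $D_0$, which for a Dupin cyclide collapses onto its two focal conics. Hence $\sing(F)$ is the focal ellipse and hyperbola in case $(O1)$ and the two focal parabolas in case $(O2)$; these are the degenerate ($\lambda=L=0$) members of the bicircular quartics \eqref{eq:general_bq} and serve as the focal curves in the sense of Definition~\ref{def:foc}. Concretely I would substitute $\partial_s F=0$ and $\partial_t F=0$ into the QB formula and simplify, or equivalently use the implicit determinant \eqref{implicit} of the slice patches, to verify that the images are exactly these conics. \textbf{The main obstacle} I anticipate is this last identification: proving that the Jacobian's zero set is precisely the pair of focal conics, carried out uniformly so that the parabolic case $(O2)$ emerges cleanly as the limiting degeneration of the ring/horn case $(O1)$, together with establishing the sharp count ``exactly two'' M-spheres rather than merely ``at least two''.
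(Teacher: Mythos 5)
Your proposal is correct and follows essentially the same skeleton as the paper: reduce via Lemma~\ref{lem:offset-condition} to the offset construction of Lemma~\ref{lem:offset-construction} applied to a canonical quartic or parabolic cyclide, use the spherical conditions of Remark~\ref{rem:DCSphereCond} to locate the M-spheres (finding that $\sigma_1$ and $\sigma_2$ each produce one symmetry plane, counted twice, while $\sigma_3$ is a nonzero constant and the offset family instead degenerates to the point $\infty$), and then identify the singular locus with the focal conics. The one genuinely different ingredient is your treatment of the singular locus: you invoke the classical fact that the Jacobian of the normal congruence degenerates on the caustic of $D_0$, which for a Dupin cyclide collapses to its two focal conics. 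The paper instead argues concretely from Lemma~\ref{lem:DC-systems}: for a fixed one-parameter family of cyclides (2-horn with parameter $h$, offset at $d=1$, and separately a parabolic cyclide) it writes the eight homogeneous control points explicitly, checks the implicit equation in the $u$-direction, and obtains the singular curves as the intersection of the circular cones forming the non-offset coordinate surfaces with the symmetry planes, yielding the explicit equations of the focal ellipse/hyperbola and of the two focal parabolas. Your caustic argument is cleaner and more conceptual, but the explicit computation is precisely what resolves the two issues you flag as the main obstacle --- the sharp count of \emph{exactly} two M-spheres (including the detection of the spherical degenerations $h=\pm1$, which must be excluded by the non-spherical hypothesis) and the exact identification of the singular conics --- so to make your route complete you would either have to carry out that computation anyway or supply a reference for the caustic-of-a-Dupin-cyclide fact and a separate argument that $\sing_3(F)=\emptyset$.
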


\begin{proof}
It is enough to investigate the properties of the offset cube from a generic Dupin cyclide. 
A quartic Dupin cyclide, up to Euclidean similarities and offsetting, 
can be reduced to a 2-horn cyclide symmetric with respect to the planes $z=0$ and $y=0$. There is one parameter family of such Dupin cyclides and can be defined by the control points $p_0=-\ii$, $p_1=h\ii$, $p_2=-h\ii$, $p_3=\ii$, $h\neq 0$ and tangent vectors $v_1=\jj$, $v_2=\kk$ at $p_0$.
We build the offset cube based on the offset of this Dupin cyclide at a distance $d=1$. A homogeneous representation of this cube is given by
\begin{equation*}
\begin{pmatrix}
u_i \\
w_i
\end{pmatrix}_{i=0\dots7}
\!\!\!\!\!\!
=
\begin{pmatrix}
-\ii & h\jj & h\kk & h & -2\ii & (h+1)\jj & (h-1)\kk & 0 \\ 
1 & -\kk & -\jj & -h\ii & 1 & -\kk & -\jj & -h\ii
\end{pmatrix}.
\end{equation*}
The implicit equation in the $u$-direction is given by
\[(x^2+y^2+z^2+h-u^2)^2-((1+h)x-u(1-h))^2-4hy^2=0.\]
This is indeed a Dupin cyclide: by substituting $a=(1+h)/2$, $f=(1-h)/2$ and $r=u$, the quartic Dupin cyclide (see \cite{chandru1989geometry}) is obtained.
Using Remark~\ref{rem:DCSphereCond}, the spherical conditions in 3 directions are 
\begin{align*}
\sigma_1(s)=(1-h)(1-s)s,\quad
\sigma_2(t)=-(1+h)(1-t)t,\quad
\sigma_3(u)=2h.
\end{align*}
Each of the solutions $s=0$ and $s=1$ of $\sigma_1(s)=0$ gives the plane $y=0$. Similarly, each of the solutions $t=0$ and $t=1$ of $\sigma_2(t)=0$ gives the plane $z=0$. 
Of course, in the offset direction, we have $\sigma_3(u)=h\neq 0$. However, the family degenerates to the single point $P=\infty$.

The singular curves of the offset DC system are obtained by intersecting the coordinate surfaces in the non-offset directions (which are circular cones) with the coordinate planes:
\begin{align*}
 y =  x^2/\left(\frac{1-h}{2}\right)^2-z^2/h-1=0, \quad
 z =   x^2/\left(\frac{1+h}{2}\right)^2+y^2/h-1=0.
\end{align*}
Generically, they are focal ellipse/hyperbola. This is the case $(O1)$.
The limit cases $h=\pm 1$ emphasis that the basis Dupin cyclide is a torus. The offset DC system is spherical since $\sigma_1$ or $\sigma_2$ is identically zero.

We consider the same approach for parabolic cyclides for the case $(O2)$. Parabolic Dupin cyclides are equivalent under M\"obius and offset transformations. Consider the one defined by the control points $p_0=\infty$, $p_1=\ii$, $p_2=-3\ii$ and $p_3=\ii$, and $v_1=\jj$, $v_2=\kk$. 
A homogeneous representation of the offset cube (defined by the other offset layer at $d=1$) is
\begin{equation*}
\begin{pmatrix}
u_i \\
w_i
\end{pmatrix}_{i=0\dots7}
\!\!\!\!\!\!
=
\begin{pmatrix}
1 & -\kk & -3\jj & -4 \ii & 1 & -2\kk & -2\jj& 0\\ 
0 & -\jj & -\kk & -4 & 0 & -\jj & -\kk & -4
\end{pmatrix}.
\end{equation*}
The implicit equation in the $u$ direction is indeed a parabolic Dupin cyclide
\[z^2(x+3-u)+y^2(x-1-u)+(x+3-u)(x-1-u)(x-1+u)=0.\]
The spherical conditions 3 directions are computed as
\begin{align*}
\sigma_1(s)=-(1-s)s,\quad
\sigma_2(t)=(1-t)t,\quad
\sigma_3(u)=-4.
\end{align*}
The two solutions of $\sigma_1(s)=0$ (resp. $\sigma_2(t)=0$) give the same plane $y=0$ (resp. $z=0$).
The singular curves of the offset DC system are two focal parabolas defined by
\begin{align*}
 &y = z^2-8(x+1)=0,\\
 &z = y^2+8(x-1)=0.
\end{align*}
Note that in addition to the offset based construction on M-spheres, the offsets of a circular cone or cylinder form a spherical DC system.
\end{proof}

\begin{figure}[H]
    \centering
    \includegraphics[width=6cm]{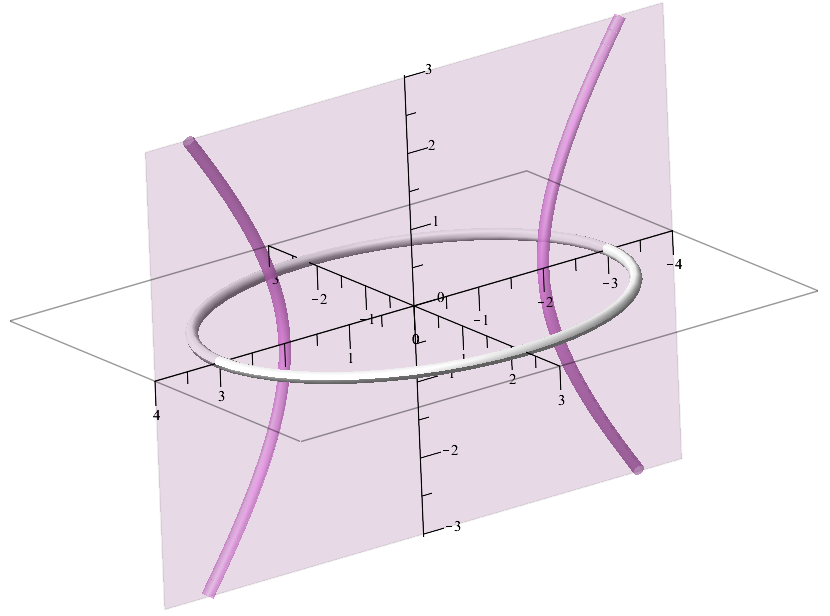} \quad
    \includegraphics[width=6cm]{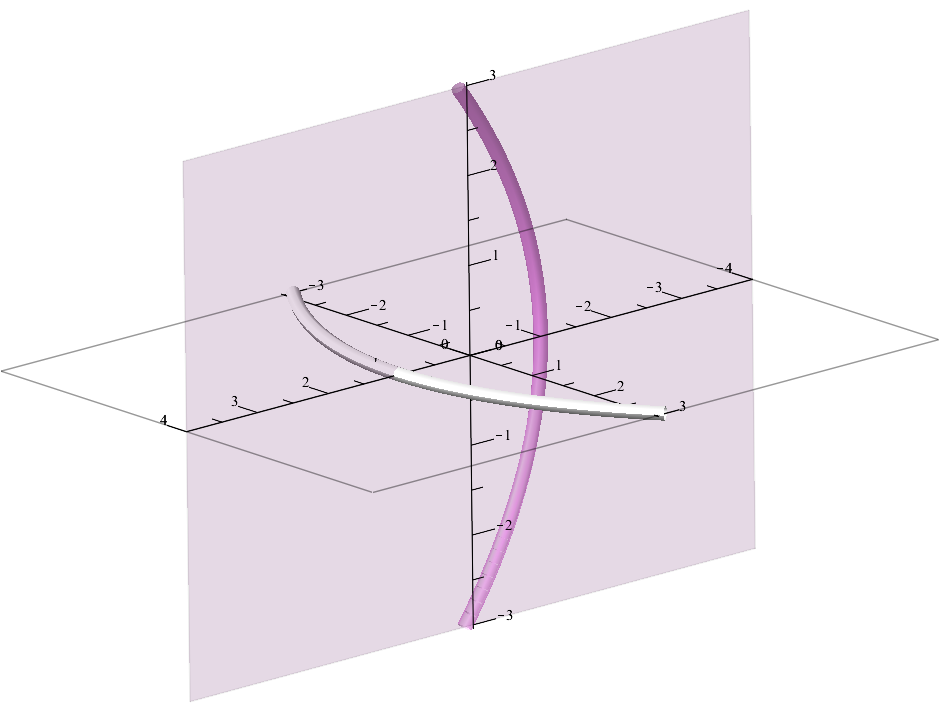}
    \caption{Focal conics, ellipse/hyperbola on the left and two parabolas on the right, as singular locus of an offset DC system.}
    \label{fig:Sings-Offset}
\end{figure}

\section{Case A: three real M-spheres of symmetry}
\label{sec:3P}

This section considers a big class of DC systems with 3 M-spheres in different families.  
These 3 M-spheres are necessarily symmetry M-spheres of the DC system and are mutually orthogonal.
Since their intersection contains exactly 2 points, two cases can appear: at least one of the intersection points is regular, and both intersecting points are singularities of the DC system. 
Each of these cases will be addressed in the following Theorems~\ref{th:3P} and \ref{th:3P-exc}. 

\begin{lemma}
\label{lem:Miquel3P}
All DC systems symmetric w.r.t. the planes $z=0$, $y=0$ and $x=0$, and have a regular point at the origin or on infinity, can be parametrized using the homogeneous control points
\begin{equation}
\label{eq:3Pcontrols}   
\begin{pmatrix}
 0 &  \ii & \jj & \ii+\jj & \kk & \ii + \kk & \jj+\kk & d+\ii+\jj+\kk\\ 
 1 &  1 & 1 & 1-a\kk & 1 &  1-c\jj & 1-b\ii & 1-b\ii-c\jj-a\kk
\end{pmatrix},
\end{equation}
where $a,b,c\in \R$ and $d=a+b+c$. The corresponding parametrization is 
\begin{equation}
\label{eq:3P-QB}
 F(s,t,u)=(dstu+s\ii+t\jj+u\kk)(1-btu\ii-csu\jj-ast\kk)^{-1}.   
\end{equation}

\end{lemma}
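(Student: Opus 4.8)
The plan is to reconstruct the cube from the three faces meeting at the regular point and then to recognize the formula \eqref{eq:3P-QB} through the uniqueness in Lemma~\ref{lem:MiquelFarin}. First I would normalize. Since the three symmetry M-spheres are mutually orthogonal and meet in exactly two points, a \M transformation carries them to the planes $x=0$, $y=0$, $z=0$ and their two common points to $0$ and $\infty$; labeling so that the regular one is the origin, I set $p_0=0$. At the origin the frame $\partial_s F,\partial_t F,\partial_u F$ is orthogonal, and it is carried to itself by each coordinate reflection (which fixes the origin), so it must coincide with the common eigenframe of the three reflections, namely the coordinate axes. Hence the three edges issuing from $p_0$ run along the $x$-, $y$- and $z$-axes. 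Using the independent projective reparametrizations of the three factors of $(\R P^1)^3$ allowed by Definition~\ref{def:M-equiv}, I place the adjacent corners at $p_1=\ii$, $p_2=\jj$, $p_4=\kk$; since a single reparametrization in each direction acts simultaneously on the two faces sharing the corresponding edge, the faces stay compatible on their common edges throughout.

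Next I analyze the three faces incident to $p_0$. In this normalization the symmetry planes appear as the coordinate surfaces $s=0$, $t=0$, $u=0$, so each incident face is a planar (two-dimensional) DC system lying in a coordinate plane, passing through the origin and having the two relevant axes as coordinate lines. By Lemma~\ref{lem:2d-dc} together with the two surviving reflection symmetries, each such face is exactly of the type of Example~\ref{exa:bipolar}, hence is determined by a single real weight. This gives the far corners of the three faces as $(\ii+\jj,\,1-a\kk)$, $(\ii+\kk,\,1-c\jj)$ and $(\jj+\kk,\,1-b\ii)$, and fixes the common Farin points $\tfrac12\ii,\tfrac12\jj,\tfrac12\kk$ on the shared axis-edges. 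Thus exactly three real parameters $a,b,c$ remain, and they are genuinely independent because no coordinate reflection interchanges the three faces.

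By Lemma~\ref{lem:MiquelFarin} these three compatible faces extend to a unique DC cube, its eighth control point being the Miquel point of Lemma~\ref{lem:miquel} computed after the inversion $\inv_{p_0}^1$ that flattens the seven finite corners. Rather than evaluate that construction head-on, I would exhibit the candidate \eqref{eq:3P-QB} and check that it matches all the data already fixed: its slices at $u=0$, $t=0$, $s=0$ are $(s\ii+t\jj)(1-ast\kk)^{-1}$, $(s\ii+u\kk)(1-csu\jj)^{-1}$, $(t\jj+u\kk)(1-btu\ii)^{-1}$, i.e.\ the three prescribed faces; its partials at the origin are $\ii,\jj,\kk$, so it is regular there; and it is invariant under each reflection combined with the sign change of the corresponding parameter. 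Being a DC cube built from the prescribed faces, it is automatically orthogonal in all three directions, so by the uniqueness of Lemma~\ref{lem:MiquelFarin} it is the system in question. Finally, expanding \eqref{eq:3P-QB} in the Bernstein basis returns the homogeneous control points \eqref{eq:3Pcontrols}, with $u_7=d+\ii+\jj+\kk$, weight $w_7=1-b\ii-c\jj-a\kk$ and $d=a+b+c$. I expect the main obstacle to be exactly this last identification of $p_7$: one must confirm that the Miquel/Farin eighth point coming from Lemma~\ref{lem:miquel} coincides with the coefficient $d=a+b+c$ and the weight $1-b\ii-c\jj-a\kk$ furnished by the Bernstein expansion, which is precisely where the symmetric normalization of the three face weights is used.
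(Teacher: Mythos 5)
Your normalization and setup coincide with the paper's: reduce the symmetry M-spheres to the coordinate planes, put the regular intersection point at the origin, align the edges at $p_0$ with the axes, and recognize each of the three incident faces as a one-parameter $2$-polar system as in Example~\ref{exa:bipolar}, yielding the seven homogeneous control points with indices containing a $0$. The divergence, and the gap, is in how the eighth point is obtained. Your plan is to write down the candidate \eqref{eq:3P-QB}, check that its slices at $s=0$, $t=0$, $u=0$ reproduce the three prescribed faces, and then invoke the uniqueness of Lemma~\ref{lem:MiquelFarin}. But those three boundary slices determine only the seven control points you already have; the trilinear map still has the homogeneous control point $(u_{111},w_{111})$ completely free, and \emph{any} choice of it produces a trilinear QB map with the same three faces. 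The uniqueness in Lemma~\ref{lem:MiquelFarin} applies only to the map that is actually a DC cube, i.e.\ satisfies the orthogonality condition in the interior, and nothing in your argument establishes that the candidate \eqref{eq:3P-QB} does. The sentence ``being a DC cube built from the prescribed faces, it is automatically orthogonal'' assumes exactly what has to be proved.

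So the one nontrivial computation of the lemma is missing. You would need either to verify $\partial_s F\perp\partial_t F\perp\partial_u F$ directly for \eqref{eq:3P-QB}, or to do what the paper does: apply $\inv_0^1$, compute the Miquel point of the triangle $p_1p_2p_4$ with the side points determined by $a,b,c$ via Lemma~\ref{lem:miquel}, obtaining
\[
M=\frac{(1+a-c+bd)p_1+(1-a+b+cd)p_2+(1-b+c+ad)p_4}{3+d^2},
\]
invert back to get $p_7$, and then run the Farin-point procedure of Lemma~\ref{lem:MiquelFarin} to get $w_7=1-b\ii-c\jj-a\kk$ and $p_7w_7=d+\ii+\jj+\kk$. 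You explicitly flag this identification as ``the main obstacle'' but leave it unresolved; since it is the substantive content of the lemma rather than a routine check, the proposal as written is incomplete.
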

\begin{proof}
The construction of a DC cube in this canonical position is based on the choice of 
2-dimensional DC systems on each plane.  Hence, we repeat the general construction in Example \ref{exa:bipolar}, with different parameters $a,b,c$ on each plane; 
see the second step of DC cube construction in Figure \ref{fig:unit_cube}.
This gives all homogeneous control points except $(p_7w_7,w_7)$, where the computation of 
the Miquel point is required.
To compute $p_7$, we apply the inversion $\inv_0^1$. The 3 points $p_1$, $p_2$, $p_4$ are preserved, and they form a triangle for the Miquel's theorem. The points on the sides are 
\begin{align*}
q_3 &=
\frac{1+a}{2}p_1 + \frac{1-a}{2}p_2,\;
q_5 &=
\frac{1+c}{2}p_1 + \frac{1-c}{2}p_4,\;
q_6 &=
\frac{1+b}{2}p_2 + \frac{1-a}{2}p_4.
\end{align*}
Using Lemma \ref{lem:miquel}, the Miquel point is given by
\[
M =\frac{(1+a-c+bd)p_1+(1-a+b+cd)p_2+(1-b+c+ad)p_4}{3+d^2}.
\]
Therefore, we have
\begin{align*}
p_7 
= \mathrm{Inv}_0^1(M)
=\frac{1+a-c+bd}{1+a^2+b^2+c^2} \ii +\frac{1-a+b+cd}{1+a^2+b^2+c^2} \jj + \frac{1-b+c+ad}{1+a^2+b^2+c^2}\kk.
\end{align*}
 Next, we use the procedure in Lemma \ref{lem:MiquelFarin} to compute that $w_7=1-b\ii-c\jj-a\kk$.
The product $p_7w_7$ reduces to the simple expression
$p_7w_7=d+\ii+\jj+\kk$. The parametrization $F(s,t,u)$ is obtained by applying the formula \eqref{eq:DC-cube-QB} to the obtained homogeneous control points.
The regularity of those systems at the origin or on infinity follows from Theorem~\ref{th:3P} below.
\end{proof}

\begin{theorem}
\label{th:3P}
Any non-spherical DC system with $3$ M-spheres of symmetry and characterized by the existence of a regular point on the intersection of those M-spheres, can be reduced to a canonical form, where the singular locus is one of the following:
\begin{itemize}
\item[$(A1)$] three focal $1$-oval bicircular quartics on orthogonal planes,
\item[$(A2)$] two focal $2$-oval bicircular quartics on orthogonal planes,
\item[$(A3)$] focal ellipse and hyperbola on orthogonal planes. 
\end{itemize}
\end{theorem}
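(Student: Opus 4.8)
The plan is to begin from the explicit canonical parametrization furnished by Lemma~\ref{lem:Miquel3P}. By that lemma every system in the present class is M\"obius equivalent to \eqref{eq:3P-QB}, with the three symmetry M-spheres sent to the coordinate planes and the regular common point to the origin; the residual freedom (the interior reparametrization of Remark~\ref{rem:interior-rep}, real scalings, and the index permutations of Definition~\ref{def:M-equiv}) is kept for the final normalization of $(a,b,c)$. Non-sphericity guarantees that none of the indicators $\sigma_i$ of Remark~\ref{rem:DCSphereCond} vanishes identically, which keeps the singular curves genuinely quartic in the generic strata.

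Next I would compute the singular locus through Lemma~\ref{lem:DC-systems}, so that it suffices to determine $\sing_1,\sing_2,\sing_3$, each the image of a collapsed coordinate line. Restricting $F$ to an $s$-line freezes $(t,u)$ and presents it as a quaternionic M\"obius map $s\mapsto(A_1s+B_1)(C_1s+D_1)^{-1}$ with $A_1=dtu+\ii$, $B_1=t\jj+u\kk$, $C_1=-cu\jj-at\kk$, $D_1=1-btu\ii$; the line collapses to the point $p=A_1C_1^{-1}=B_1D_1^{-1}$. One checks that $A_1C_1^{-1}$ and $B_1D_1^{-1}$ are purely imaginary in the $\jj\kk$-plane, so $\sing_1\subset\{x=0\}$, and the collapse condition splits into its $\jj$- and $\kk$-parts. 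The decisive observation is that each part carries an overall factor $t$ (resp. $u$), and that taking the ratio of the two reduced equations collapses them to the single conic in squared coordinates
\begin{equation*}
bd\,t^2u^2+a\,t^2-c\,u^2+1=0,
\end{equation*}
which conversely implies both parts; hence the collapse set is a curve and $\sing_1$ is a curve in $\{x=0\}$. The same computation in the other two directions gives $\sing_2\subset\{y=0\}$ and $\sing_3\subset\{z=0\}$.

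I would then identify each curve. With $p=B_1D_1^{-1}$ giving the rational parametrization $y=t(1+bu^2)/(1+b^2t^2u^2)$, $z=u(1-bt^2)/(1+b^2t^2u^2)$ of $\sing_1$ and the relation $y^2+z^2=(t^2+u^2)/(1+b^2t^2u^2)$ already in hand, eliminating $(t,u)$ against the collapse conic yields the implicit equation of $\sing_1$; by the reflection symmetry in $y$ and $z$ it takes the symmetric bicircular form \eqref{eq:bq-canonic}, and likewise for $\sing_2,\sing_3$ in the shape \eqref{eq:B_1}--\eqref{eq:B_3}. The heart of the argument is then to read off the coefficient triple $(K,M,N)$ and the sign $\delta$ as functions of $a,b,c$ and to verify the focal identity \eqref{eq:foc-rel}, $KM+MN+NK+\delta=0$; I expect this to reduce to a polynomial identity that holds precisely because $d=a+b+c$. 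By Definition~\ref{def:foc} this exhibits $\sing(F)$ as a triple of focal bicircular quartics.

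Finally I would classify. The common sign $\delta\in\{-1,+1\}$ is determined by $a,b,c$ and fixes the number of ovals of each curve via \eqref{eq:bq-canonic} (one oval for $\delta=-1$, two for $\delta=+1$), while Theorem~\ref{th:bq-properties} shows that in the two-oval regime exactly one of the three focal curves becomes imaginary. This separates $(A1)$ (three real one-oval quartics) from $(A2)$ (two real two-oval quartics and an imaginary one); passing to the canonical parameters \eqref{eq:foc-param} and using the residual normalization fixes a representative in each stratum. Case $(A3)$ arises on the degeneration $\lambda=0$ in \eqref{eq:general_bq}, where the quartics drop to a focal ellipse and hyperbola, exactly the locus where the present family meets the offset case $(O1)$. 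I expect the main obstacle to be the middle step: performing the elimination cleanly enough to extract $(K,M,N,\delta)$ and then confirming the focal identity, after which the trichotomy is essentially bookkeeping in $(a,b,c)$-space.
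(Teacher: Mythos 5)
Your overall strategy is the same as the paper's: start from the canonical parametrization of Lemma~\ref{lem:Miquel3P}, extract the three singular curves, identify them with the focal bicircular quartics of Definition~\ref{def:foc}, and split cases by the normalized invariant $\delta$. Where you genuinely differ is in how the singular curves are obtained: the paper reads off the degenerate planar coordinate surfaces from the spherical indicators $\sigma_1(s)=s(a+c)$, $\sigma_2(t)=t(a+b)$, $\sigma_3(u)=u(b+c)$ and gets \eqref{eq:BQs-1}--\eqref{eq:BQs-3} by intersecting those planes with the coordinate surfaces of the other families, whereas you compute the collapse locus of coordinate lines directly from the quaternionic M\"obius restriction. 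Your computation is essentially right: in the variables $T=t^2$, $U=u^2$ the two reduced component equations factor as $(bdTU+aT-cU+1)(bcU-a)$ and $(bdTU+aT-cU+1)(abT+c)$, so the conic is indeed a common factor; but your phrase ``which conversely implies both parts'' only gives one inclusion. To conclude that the real collapse set is exactly the conic you must dispose of the residual common solution $T=-c/(ab)$, $U=a/(bc)$, which fortunately satisfies $TU=-1/b^2<0$ and so is never real. Your expectation that the focal identity is a polynomial consequence of $d=a+b+c$ is correct: in the elementary symmetric functions $e_i$ of $(a,b,c)$ it reduces to $e_2(e_3+\delta e_1)+(e_1+\delta e_3)=0$, and both summands vanish under the normalization that brings the quartics to the form \eqref{eq:B1}--\eqref{eq:B3}.

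The one genuine gap is in the final trichotomy. You assign case $(A3)$ only to the degeneration $\lambda=abc=0$ of \eqref{eq:general_bq}, but there is a second stratum leading to $(A3)$: $d=a+b+c=0$ with $abc\neq 0$. There $d/(abc)=0$ cannot be rescaled to $\pm 1$, the three curves \eqref{eq:BQs-1}--\eqref{eq:BQs-3} remain genuine quartics but all acquire a node at the origin, and only after the unit inversion $\inv_0^1$ do they become focal conics. This case must be separated out before the reduction to $\delta=\pm 1$ is legitimate, and your claim that ``$\delta\in\{-1,+1\}$ is determined by $a,b,c$'' silently excludes it. A smaller misattribution: the fact that exactly one of the three focal curves is empty when $\delta=+1$ does not follow from Theorem~\ref{th:bq-properties}, which concerns the symmetry circles of a single quartic; it is the explicit statement about ${\cal B}_3^{+}$ made after Definition~\ref{def:foc}.
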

\begin{proof}
It is enough to characterize all cases of singularities of the canonical forms defined in Lemma \ref{lem:Miquel3P}.
The quadratic polynomials for spherical conditions in $s,t,u$-directions by Remark~\ref{rem:DCSphereCond} are 
\begin{align}
\label{eq:spherical-cond_abc-3P}
\sigma_1(s)=s(a+c), \quad  \sigma_2(t)=t(a+b), \quad \sigma_3(u)=u(b+c).  
\end{align}
Hence, a degeneration to a spherical DC system appears in the case $a+b=0$,  $a+c=0$, or $b+c=0$. 
The roots $s=0$, $t=0$ and $u=0$ give the degeneration with the coordinate surfaces to $x=0$, $y=0$ and $z=0$ respectively. 
The singular locus of the DC system is obtained by intersecting the degenerated planes respectively with 
the surfaces in $s,t,u$-directions; see Lemma~\ref{lem:DC-systems}. 
This gives a collection of 3 bicircular quartics:
\begin{align}
\label{eq:BQs-1}
BQ_1:&\; z = abc(x^2+y^2)^2+(ab-cd)x^2+(bd-ac)y^2-d=0,\\
\label{eq:BQs-2}
BQ_2:&\; y = abc(x^2+z^2)^2+(ad-bc)x^2+(ac-bd)z^2-d=0,\\
\label{eq:BQs-3}
BQ_3:&\; x = abc(y^2+z^2)^2+(bc-ad)y^2+(cd-ab)z^2-d=0,
\end{align}
where $d=a+b+c$. 
If $abc=0$, then we have focal ellipse and hyperbola on two planes and a conic without real points on the third plane, depending on the signs of the non-zero parameters. On the other hand, if $d=0$, then all the bicircular quartics have a singularity at the origin. By applying the unit inversion at the origin, we obtain focal ellipse and hyperbola on orthogonal planes and a conic without real points on the third plane depending on the signs of the parameters $a,b,c$. 

Assume now that $abc\neq 0$, $d\neq 0$ and let $\delta=d/abc$. We apply further a scaling by a parameter $\lambda$ on $(x,y,z)$. This is equivalent to scaling the parameters $(a,b,c)$ by $\lambda^2$. By an appropriate choice of $\lambda$, we can further assume that $\delta=\pm1$. The equations of the bicircular quartics accordingly reduce to
\begin{align}
BQ_1:&\; z = (x^2+y^2)^2+(\delta c+1/c)x^2-(\delta b+1/b)y^2+\delta=0, \label{eq:B1}\\
BQ_2:&\; y = (x^2+z^2)^2-(\delta a+1/a)x^2+(\delta b+1/b)z^2+\delta=0,
\label{eq:B2}\\
BQ_3:&\; x = (y^2+z^2)^2+(\delta a+1/a)y^2-(\delta c+1/c)z^2+\delta=0.
\label{eq:B3}
\end{align}
Those curves coincide with the focal bicircular quartics in Definition~\ref{def:foc} with the change of coefficients in \eqref{eq:foc-param}. The singular locus is then three focal 1-oval bicircular quartics when $\delta=-1$; and three focal 2-oval bicircular quartics when $\delta=+1$ and one of the curves has no real points; see Figure~\ref{fig:Sings-3P}.
\end{proof}

\begin{figure}[H]
    \centering
    \includegraphics[width=5cm]{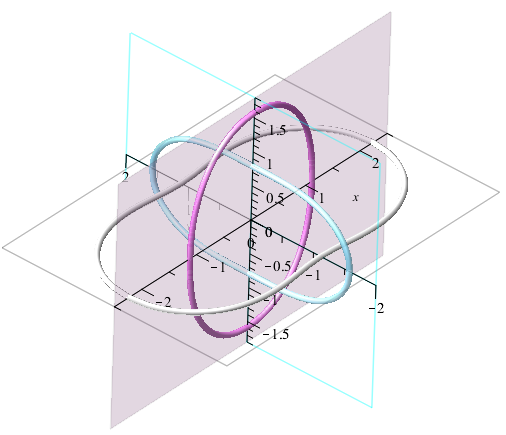} 
    \includegraphics[width=6cm]{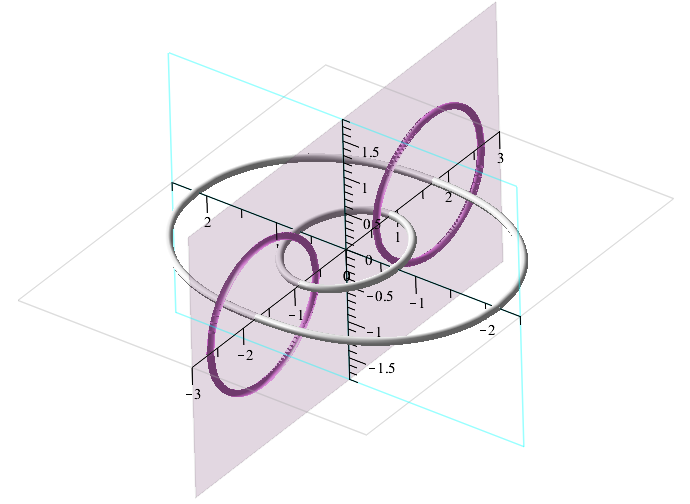}
    \caption{Singularities of a DC system with three planes of symmetry. Three focal 1-oval bicircular quartics on the left; and three focal 2-oval bicircular quartics on the right, where one of the curves has no real points.}
    \label{fig:Sings-3P}
\end{figure}

\begin{remark}
\label{rem:twice-cover-3P}
With the DC cube defined by $F(s,t,u)$ in \eqref{eq:3P-QB}, each of the symmetry planes $z=0$, $y=0$ and $x=0$ is covered twice by two $2$-polar systems by evaluating the parameters $s$, $t$, $u$ at $0$ and at infinity.
Additionally, the DC systems in Theorem~\ref{th:3P} with focal ellipse and hyperbola cases as their singularities are not offset DC systems. This is because the information from $\sigma_1(s)$, $\sigma_2(t)$, $\sigma_3(u)$ in \eqref{eq:spherical-cond_abc-3P} give only degeneration to planes, but not a degeneration to points. 
\end{remark}



Next, consider the case when both points in the intersection of three M-spheres in the given 
DC system are singular points.

\begin{theorem}
\label{th:3P-exc}
Any non-spherical DC system with $3$ M-spheres of symmetry, where the $2$ intersection points of those M-spheres are singular,
can be reduced to the canonical form $(A4)$ with two intersecting lines in the singular locus. 
\end{theorem}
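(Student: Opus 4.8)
The plan is to mirror the treatment of the regular case in Lemma~\ref{lem:Miquel3P} and Theorem~\ref{th:3P}, but to work in the degenerate configuration where the two common points of the three symmetry M-spheres are both singular. First I would use a \M transformation to send the three mutually orthogonal symmetry M-spheres to the coordinate planes $x=0$, $y=0$, $z=0$; their intersection is then exactly $\{0,\infty\}$, and by hypothesis both the origin and $\infty$ belong to $\sing(F)$. Since both points are fixed by all three reflections of the system, and a real singular line through one of them must pass through the other as well, the only configuration compatible with the symmetry is a pair of lines through the origin; the task reduces to exhibiting a canonical parametrization realizing this and verifying its singular locus.

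The second step is to produce that canonical form. The family \eqref{eq:3P-QB} cannot be reused, since a direct check of its partial derivatives at $(s,t,u)=(0,0,0)$ gives $\partial_s F=\ii$, $\partial_t F=\jj$, $\partial_u F=\kk$, so the origin is always regular there; the exceptional case therefore needs a genuinely different construction. I would build the DC cube exactly as in Lemma~\ref{lem:Miquel3P}, by prescribing a $2$-dimensional DC system on each coordinate plane and gluing them via the Miquel point (Lemmas~\ref{lem:miquel} and~\ref{lem:MiquelFarin}), but now choosing on each plane the polar-type system whose two poles sit precisely at $0$ and $\infty$. By Lemma~\ref{lem:DC-systems}(i), forcing $0,\infty\in\sing(F)$ means each of them is the collapse of an entire coordinate line, which pins down the pole positions and leaves only the residual \M and interior-reparametrization freedom of Remark~\ref{rem:interior-rep}. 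Normalizing that freedom yields the canonical homogeneous control points and an explicit trilinear quaternionic map $F(s,t,u)$.

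With the canonical $F$ in hand, the third step is routine in the style of the earlier proofs. I would compute the spherical indicators $\sigma_1,\sigma_2,\sigma_3$ of Remark~\ref{rem:DCSphereCond} and check that none vanishes identically, so the system is genuinely non-spherical, while their real roots again recover the three symmetry planes as degenerate coordinate surfaces. Intersecting the coordinate surfaces in the non-degenerate directions with these planes (Lemma~\ref{lem:DC-systems}(iii)) then produces the singular locus, and I expect the three bicircular-quartic equations \eqref{eq:BQs-1}--\eqref{eq:BQs-3} of the regular case to degenerate here into reducible curves whose only real components assemble into a single pair of intersecting lines through the origin, both containing $0$ and $\infty$. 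This would establish case $(A4)$.

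The \textbf{main obstacle} is the second step: the pole configuration that makes both $0$ and $\infty$ singular is exactly the degeneration at which the Miquel construction of Lemma~\ref{lem:Miquel3P} breaks down, so that some control points run off to infinity or coincide, and the determinant \eqref{implicit} may fall into the exceptional identically-zero case flagged in Theorem~\ref{th:implicit}. I would handle this either by computing in homogeneous coordinates on the Study quadric throughout, avoiding division by the vanishing weights, or, more transparently, by realizing $(A4)$ as the controlled limit of the focal ellipse/hyperbola system $(A3)$ as the conics degenerate to their common asymptotic lines, and checking that the limiting map is still a bona fide non-spherical DC system with the stated singular set. Confirming that the canonical form is unique up to \M equivalence, i.e.\ that the residual sign and scale parameters do not split $(A4)$ into several classes, is the final point that needs care.
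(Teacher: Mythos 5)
Your overall strategy---normalize the three symmetry M-spheres to the coordinate planes, rebuild the cube from $2$-dimensional DC systems on the faces, then read off the singular locus---matches the paper's, and you correctly observe that the family \eqref{eq:3P-QB} cannot cover this case. But the decisive step, the choice of planar systems, is wrong. Prescribing on each coordinate plane ``the polar-type system whose two poles sit precisely at $0$ and $\infty$'' means the $2$-polar system with poles $0,\infty$, i.e.\ radial lines plus concentric circles; assembling these on the three planes reproduces the classical spherical coordinate system (concentric spheres as one family), which is spherical and hence excluded by hypothesis. The paper instead observes that the two singular intersection points are \emph{$1$-polar} singularities: one plane carries Cartesian coordinates (pole at $\infty$) and the others carry $1$-polar systems with pole at the origin, realized by the explicit control points \eqref{eq:3Pexc-controls}, a deformation of the spherical offset-of-a-$1$-polar-plane configuration \eqref{eq:offset-1-polar}. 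Moreover, your claim that forcing $0,\infty\in\sing(F)$ ``pins down'' the construction up to residual normalization misses the central phenomenon of this case: because the $1$-polar faces on the two vertical symmetry planes meet only at the single point $0$, their parametrizations are automatically compatible there, which releases a genuine extra modulus $a$ --- a one-parameter family of \M classes, consistent with the dimension ``(1)'' recorded for the two-intersecting-lines case in Theorem~\ref{th:main2} --- rather than a normalizable residual freedom.

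The description of the singular locus is also not established by your argument. The symmetry claim in your first step (``a real singular line through one of them must pass through the other'') presupposes that the singular set consists of lines; a curve through $0$ and $\infty$ invariant under the three reflections need not be a pair of lines. Your third step proposes to obtain the singular locus by degenerating \eqref{eq:BQs-1}--\eqref{eq:BQs-3}, but those equations were derived for the family \eqref{eq:3P-QB}, which you had just (correctly) argued does not contain this case. In the paper the two singular lines are Villarceau lines of the parabolic cyclides occurring as coordinate surfaces; they do not arise as intersections of degenerate symmetry planes with the remaining coordinate families, so the routine of Theorem~\ref{th:3P} does not apply. Your fallback of realizing $(A4)$ as a limit of $(A3)$ is not developed enough to close this gap: one would still have to produce the limiting parametrization explicitly and verify that it is a non-spherical DC system whose singular set is the limit of the focal conics.
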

\begin{proof}
Since the system is non-spherical, both points are 1-polar singularities on certain M-spheres.
Hence, the given 3 M-spheres can be reduced to 3 orthogonal planes with the cartesian coordinates
on one plane and 1-polar system on one other with the pole at the origin.
This combination of planar parameterizations already happened in one of the spherical DC systems,
namely in the offset case based on the 1-polar plane. 

The homogeneous control points are defined as follows:
\begin{equation}
\label{eq:3Pexc-controls}   
\begin{pmatrix}
 0 & \ii & 0 & \ii & \kk & \ii + \kk & \kk & -\frac{1-a}{1+a} + \ii + \kk \vspace{0.1cm} \\ 
1 & 1 & 1 & 1+\kk & 1 & 1 & 1 - \frac{2a}{1+a}\ii  & 1 - \frac{2a}{1+a}\ii + \kk
\end{pmatrix}.
\end{equation}
If $a=0$, then we have the offset case in Fig.~\ref{fig:3P_exc01} on the left (cf. control points in \eqref{eq:offset-1-polar}).
On the right side of the same figure, one can see the deformed figure with some $a > 0$.
There are 1-polar parametrizations on the vertical symmetry planes with the same pole
at the origin. The exceptional feature of this DC system is that the parametrizations on
these faces are automatically compatible since they meet at just one point.
This is the source of unexpected degrees of freedom, namely the parameter $a$.

\begin{figure}[H]
    \centering
\includegraphics[width=5.5cm]{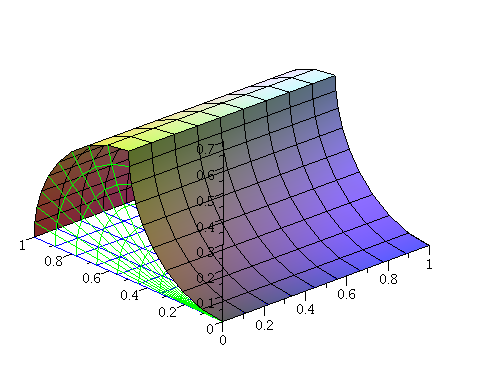}
\hskip0.5cm
\includegraphics[width=5.5cm]{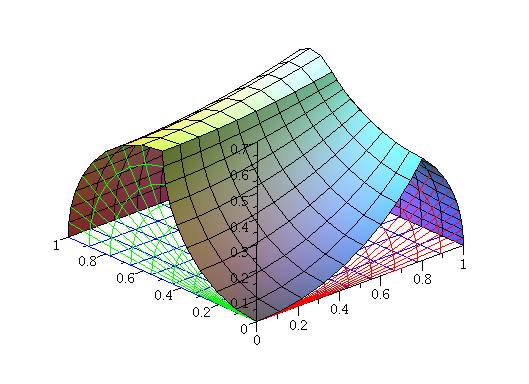}
    \caption{The offset over the 1-polar plane and its deformation.}
    \label{fig:3P_exc01}
\end{figure}
Under closer examination, one can detect two intersecting singular straight lines that
are Villarceau lines of the parabolic cyclides in coordinate families,
see Fig.~\ref{fig:3P-exception}
\end{proof}


\begin{figure}[H]
    \centering
    \includegraphics[width=6cm]{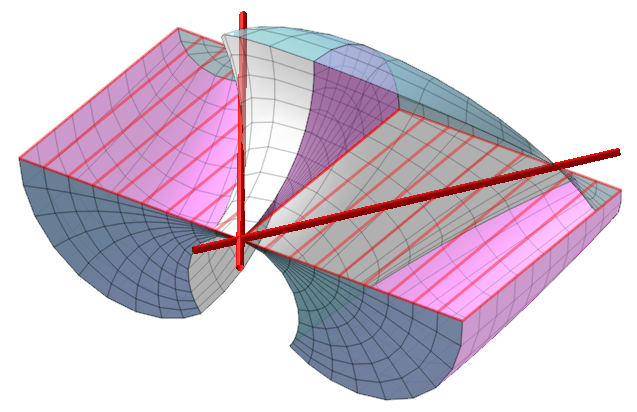}
    \caption{Two singular Villarceau lines on a DC cube.}
    \label{fig:3P-exception}
\end{figure}

\section{General non-spherical Dupin cyclidic systems}
\label{sec:General}

In this section, the most general non-spherical DC system will be constructed and
its M-spherical surfaces will be detected.
Let us note that if we have two distinct M-spheres in the same family of surfaces,
the whole family contains only M-spheres (see Section~\ref{sec:spherical}).
Therefore, a non-spherical DC system can have M-spheres only in distinct families
and they are mutually orthogonal.
The exact number of M-spheres is restricted by the following lemma.

\begin{lemma}
\label{th:General}
Any DC system contains at least two M-spheres of symmetry. 
\end{lemma}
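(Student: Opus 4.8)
The plan is to produce the symmetry M-spheres as an orthogonal triple, one attached to each of the three families of coordinate surfaces, and then to deduce reality of two of them from a signature argument. The starting point is that the eight control points $p_{ijk}$ lie on one common M-sphere $\Sigma$ (established via Miquel's Theorem). Applying a \M transformation I first normalize $\Sigma$, fixing a reference frame and cutting down the free parameters in the homogeneous control points $(u_{ijk},w_{ijk})$; this is exactly the kind of reduction already used to reach the canonical form \eqref{eq:3Pcontrols}. The overall goal is to construct mutually orthogonal candidate M-spheres $S_1,S_2,S_3$ of symmetry and show at least two are real.

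To build the triple I would exploit the singular locus. By Lemma~\ref{lem:DC-systems}, $\sing_1(F)\subset F_{s**}$, $\sing_2(F)\subset F_{*t*}$, $\sing_3(F)\subset F_{**u}$, and for a non-spherical system each $\sing_i(F)$ is a bicircular quartic (possibly degenerate to a conic, circle, line-pair, or point) carried by a coordinate surface, as confirmed by the focal bicircular quartics appearing in Theorem~\ref{th:3P}. Theorem~\ref{th:bq-properties} equips each such quartic with four mutually orthogonal M-circles of symmetry, at least two real. Promoting the real symmetry M-circle of $\sing_i(F)$ to the unique M-sphere meeting its carrier orthogonally along it yields a candidate $S_i$, and mutual orthogonality of $S_1,S_2,S_3$ is inherited from the mutual orthogonality of $\partial_s F,\partial_t F,\partial_u F$.

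The substantive step is to check that each candidate $S_i$ is a genuine symmetry of $F$, not merely of its singular curve. Here I would translate the inversion $\inv_q^r$ into its action on homogeneous control points: by the Remark following \eqref{proj-div} it is left multiplication by $M(q,r)=\left(\begin{smallmatrix} q & -(r^2+q^2)\\ 1 & -q\end{smallmatrix}\right)$, where $-(r^2+q^2)=|q|^2-r^2\in\R$ and $M(q,r)^2=-r^2 I$ encodes that inversions are involutions. An M-sphere of symmetry is then a pair $(q,r)$ together with an involutive reparametrization $\rho$ (a product of projective involutions of the $\R P^1$ factors, possibly with a coordinate permutation) and an interior reparametrization factor (Remark~\ref{rem:interior-rep}) such that $M(q,r)$ carries each $(u_{ijk},w_{ijk})$ to the $\rho$-image control point up to a real scalar. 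Using concyclicity of the eight points and the fact that the slices at the roots of the quadratic forms $\sigma_i$ of Remark~\ref{rem:DCSphereCond} are M-spheres, matching quaternionic coefficients reduces this to a finite real linear system that pins down $(q,r)$ for each $i$.

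Finally, reality: among the orthogonal triple $S_1,S_2,S_3$, at most one can be imaginary (negative radius squared), because the quadratic form classifying M-spheres as real or imaginary is of Lorentzian type with a single negative direction; hence at least two of the $S_i$ are real M-spheres. This is precisely the dichotomy realized by the three real planes of Theorem~\ref{th:3P} (Case A) and by the two real planes plus one imaginary sphere of the general case. I expect the main obstacle to be exactly the lifting step of the third paragraph — verifying that the $2$-dimensional symmetry M-circle of each singular quartic genuinely extends to a spatial \M inversion intertwined with a parameter involution of $F$ — together with the bookkeeping of degenerate configurations (coincident roots of some $\sigma_i$, spherical or offset collapses where a family degenerates to spheres or a point, and the exceptional Villarceau-line case of Theorem~\ref{th:3P-exc}), so that the count of at least two real symmetry M-spheres is preserved in every case.
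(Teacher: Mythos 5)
Your proposal diverges substantially from the paper's argument, and it has a genuine circularity at its core. You propose to extract the symmetry M-spheres from the singular locus: each $\sing_i(F)$ is claimed to be a (possibly degenerate) bicircular quartic, to which Theorem~\ref{th:bq-properties} then supplies real symmetry M-circles that you ``promote'' to symmetry M-spheres. But a bicircular quartic is a \emph{planar} curve, and in this paper the singular curves are only identified as planar bicircular quartics by intersecting the coordinate families with the degenerate M-sphere slices --- that is, the planarity of $\sing_i(F)$ is a \emph{consequence} of the existence of the symmetry M-spheres (it is derived in Theorem~\ref{th:3P} and Theorem~\ref{th:main}, both of which come after and rely on this lemma). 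A priori the component of $\sing_i(F)$ lies on a Dupin cyclide coordinate surface, not on an M-sphere, so Theorem~\ref{th:bq-properties} does not apply at this stage. In addition, the step you yourself flag as the main obstacle --- that an inversion preserving a singular curve preserves the entire DC system --- is exactly the content that would need to be proved and is not sketched in any checkable form; and the degenerate cases (conics, line pairs, points, empty components) are not mere bookkeeping, since they comprise whole classes of the classification (the offset systems and cases (A3), (A4)). The closing Lorentzian observation (an orthogonal triple of M-spheres contains at most one imaginary member) is correct and pleasant, but it only helps once three mutually orthogonal candidate symmetry spheres have been produced, which is the part that is missing.

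The paper's proof takes a different and more computational route that avoids all of this. It inverts at a regular point so that the three coordinate M-circles through it become three pairwise non-intersecting, mutually orthogonal lines, writes down the resulting DC cube explicitly with free parameters $(a,b,c)$ (and $g,h$), and computes the three spherical-condition quadratics of Remark~\ref{rem:DCSphereCond} in the three directions. Their discriminants $\Delta_s,\Delta_t,\Delta_u$ cut out three pairwise disjoint parabolic cylinders in $(a,b,c)$-space, so the complement has four components and at any parameter value at least two discriminants are positive; each positive discriminant yields a real M-sphere slice in that family, hence a symmetry M-sphere. If you want to salvage your structural approach, you would need an independent proof that each family of a non-spherical DC system contains an M-sphere slice (real, imaginary, or a point) \emph{before} invoking any facts about the singular locus --- which is essentially what the paper's discriminant computation delivers.
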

\begin{proof}
One can always find a nonsingular point $p_0$ in $\R^3$ where three M-circles of the given 
DC systems intersect orthogonally. We assume the system is non-spherical because spherical DC systems already contain infinitely many M-spheres. Then, one can assume
these three M-circles are not mutually cospherical. After inversion at that point, they all 
go to non-intersecting straight lines with mutually orthogonal directions $\ii$, $\jj$, and $\kk$; see the left side of Figure \ref{fig:parab-cyl}. 
The corresponding DC cube can be defined with the initial 4 control points:
\[
p_0 = \infty, \ p_1 = 0, \ p_2 = \kk, \ p_4 = g \ii + h \jj, \ g, \,h \in \R. 
\]
Indeed, these lines now are not intersecting, and by scaling this
configuration of lines in $\R^3$ can be transformed to any other by 
certain \M transformation.
Then according to Theorem~\ref{th:Du-control-points}(i) 
we introduce the following three points:
\[
p_3 = p_1 (1-c) + p_2 c, \ p_6 = p_2 (1-a) + p_4 a, \ p_5 = p_4 (1-b) + p_1 b, 
\] 
and then find the Miquel point $p_7$ and all homogeneous control points
by the algorithm described in Lemma~\ref{lem:MiquelFarin}:
\begin{align*}
\begin{pmatrix}
u_i\\
w_i 
\end{pmatrix}_{i=0,\ldots,5}
&= 
\begin{pmatrix}
1 & 0 & \ii & c \kk & -h\ii+g\jj & \eta (b-1)\jj \\
0 & -\ii & -\jj & 1 & -\kk & -h+g\kk 
\end{pmatrix},\\
\begin{pmatrix}
u_6 & u_7 \\
w_6 & w_7 
\end{pmatrix}
&= 
\begin{pmatrix}
-h+(\eta a + a-1)\ii+g\kk & cg\ii+ch\jj+\eta(b-1)\kk \\
g+\jj+h\kk & (\eta+1)(a-1)+b\eta+c+h\ii-g\jj
\end{pmatrix},
\end{align*}  
where $\eta = g^2+h^2$.
Then, after computation of spherical conditions \eqref{spherical-cond}
in all three directions, we get three quadratic equations in $a,b,c$ with the 
following discriminants:
\begin{align*}
\Delta_u =& (\eta(1-b-a)-a+1)^2+4\eta b, \\
\Delta_s =& (\eta(1-b)-c)^2+4g^2c,\\
\Delta_t =& (\eta a+c+a-1)^2 + 4h^2(1-c).
\end{align*}
These discriminants define three non-intersecting parabolic cylinders in $(a,b,c)$ space, 
see Fig.~\ref{fig:parab-cyl}. 
\begin{figure}
    \centering
    \includegraphics[width=6.5cm]{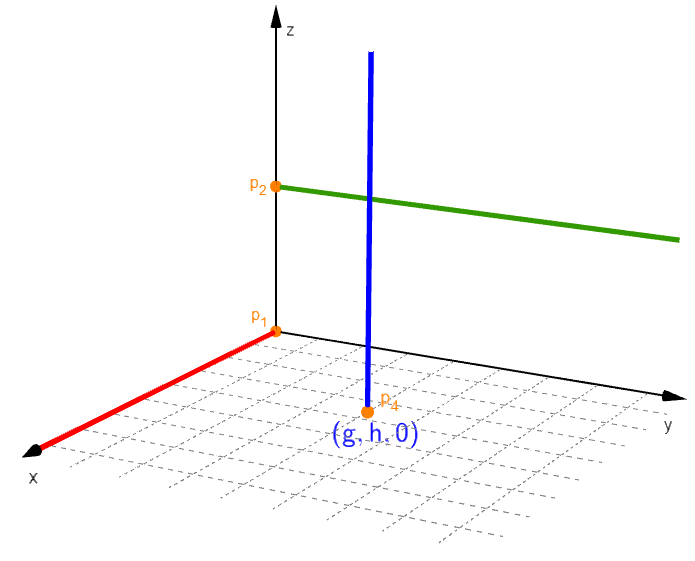}\hskip0.15cm
    \includegraphics[width=6cm]{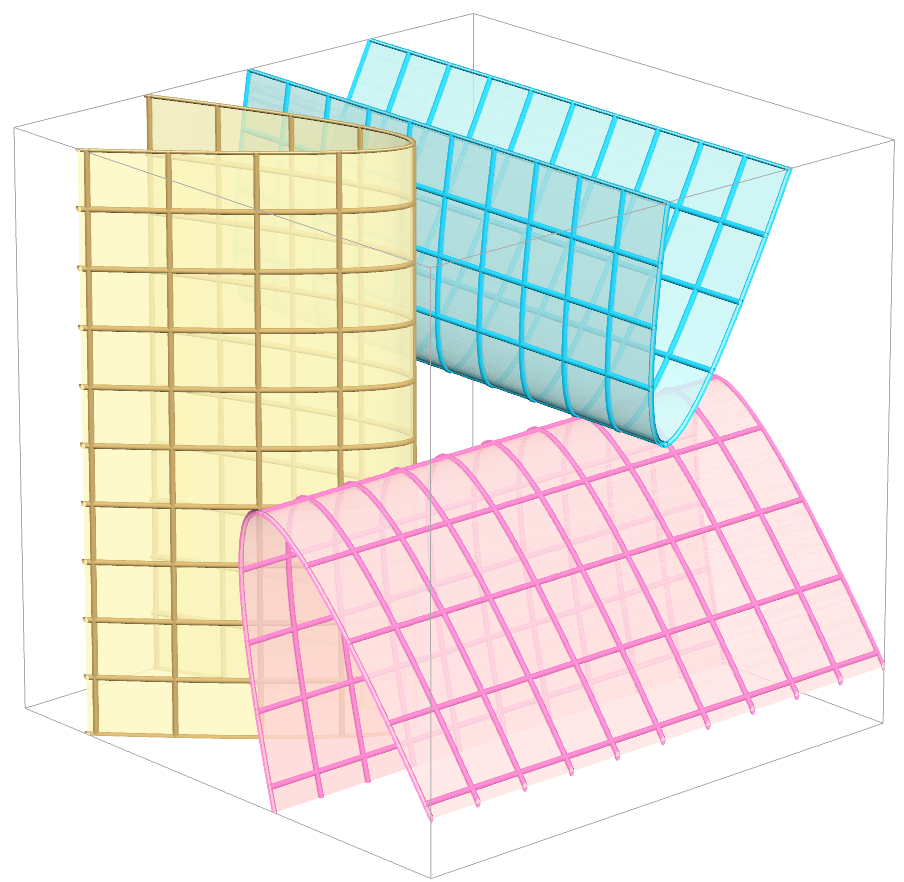}
    \caption{Left: a canonical position of 3 inverted coordinate M-circles meeting at a point. Right: 3 non-intersecting parabolic cylinders separating the $(a,b,c)$-space into 4 disjoint regions.}
    \label{fig:parab-cyl}
\end{figure}
In fact, the cylinders are separated by the 3 regions:
\begin{equation}
\label{eq:regions}
h^2/(\eta+1) \leq a \leq \eta/(\eta+1); \quad 0 \leq b \leq h^2/\eta; 
\quad 0 \leq c \leq 1.  
\end{equation}
The $(a,b,c)$-space is then separated by the 3 cylinders into $4$ regions: three insiders and one outside the cylinders. This shows that at least two of the discriminants are positive. Hence, the DC system has at least two M-spheres of symmetry. 
\end{proof}


Since a DC system has at least two real M-spheres of symmetry, starting with the two M-spheres, we clarify the separation of classes in 
Lemma~\ref{th:General} by a concrete construction with control points as in Section~\ref{sec:3P}.

\begin{theorem}
\label{th:main}
Any non-spherical DC system is \M equivalent to one of three cases:
offset (O), type (A), and type (B) having the 
symmetries:
\begin{itemize}
\item[(O)] two planes and one zero sphere of symmetry; 
\item[(A)] three planes of symmetry; 
\item[(B)] two planes and one imaginary sphere of symmetry.
\end{itemize}
The singular locus for a system of type (B) is composed of two focal $2$-oval bicircular quartics.
\end{theorem}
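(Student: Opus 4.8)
The plan is to classify all non-spherical DC systems by starting from the two guaranteed M-spheres of symmetry (Lemma~\ref{th:General}) and analyzing the third potential symmetry. Since every non-spherical DC system has at least two real M-spheres of symmetry, and distinct M-spheres in a non-spherical system must live in different families and be mutually orthogonal, I would first normalize these two M-spheres via a \M transformation to two orthogonal coordinate planes, say $z=0$ and $y=0$. The key structural question then becomes: what is the nature of the \emph{third} symmetry element of the system? The trichotomy (O)/(A)/(B) should emerge from whether this third symmetry is a zero-radius sphere (a point, giving the offset case O by Lemma~\ref{lem:offset-condition}), a real plane (the three-plane case A already treated in Theorem~\ref{th:3P}), or a genuinely imaginary sphere (the new case B).

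\textbf{Construction of the canonical family.} First I would set up a concrete control-point parametrization with the two normalized symmetry planes built in, following the pattern of Lemma~\ref{lem:Miquel3P} and the general construction in Lemma~\ref{th:General}. The two plane symmetries impose linear constraints on the eight homogeneous control points; I expect this to leave a low-dimensional family of free real parameters. Using the Miquel-point algorithm from Lemma~\ref{lem:MiquelFarin} and Lemma~\ref{lem:miquel}, the eighth control point $p_7$ and its weight $w_7$ are determined, yielding an explicit 3-linear QB map $F(s,t,u)$ analogous to \eqref{eq:3P-QB}. The symmetry analysis of this explicit form reveals the third invariant M-sphere; the three cases (O), (A), (B) correspond to that sphere being of zero, real, or imaginary radius, i.e.\ to the sign of a discriminant-type quantity in the parameters. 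The discriminant computation in Lemma~\ref{th:General}, together with the four-region decomposition of $(a,b,c)$-space in \eqref{eq:regions}, already suggests where case (B) lives: it is the remaining regime not covered by the offset and three-plane cases.

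\textbf{Identifying the singular locus for type (B).} For the final claim, I would compute the spherical-condition polynomials $\sigma_1(s),\sigma_2(t),\sigma_3(u)$ of Remark~\ref{rem:DCSphereCond} for the canonical type-(B) parametrization. As in the proof of Theorem~\ref{th:3P}, the roots of these polynomials detect where coordinate surfaces degenerate to M-spheres (here, to the two symmetry planes), and the singular locus $\sing(F)$ is obtained by Lemma~\ref{lem:DC-systems}(iii) as the intersection of these degenerate planes with the coordinate surfaces in the non-degenerate directions. I expect these intersections to be plane quartics of the form \eqref{eq:general_bq}; I would then verify, using Theorem~\ref{th:bq-properties} and Definition~\ref{def:foc}, that they are \emph{focal} bicircular quartics satisfying the focal relation \eqref{eq:foc-rel}, and that exactly two of the three coordinate planes carry real 2-oval curves while the third (corresponding to the imaginary symmetry sphere) carries a quartic with no real points. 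Matching the resulting coefficients to the canonical focal forms \eqref{eq:B_1}--\eqref{eq:B_3} via the substitution \eqref{eq:foc-param} with $\delta=+1$ confirms the two-oval character.

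\textbf{The main obstacle} will be proving exhaustiveness, namely that these three symmetry types genuinely exhaust all non-spherical systems rather than merely describing three constructible families. The delicate point is showing that the third symmetry M-sphere always exists and is forced to be of one of exactly these three types: this requires arguing that the canonical parametrization built from the two guaranteed planar symmetries \emph{always} admits an interior reparametrization (Remark~\ref{rem:interior-rep}) bringing it into a form symmetric under a third inversion. I would handle this by examining how the eight control points, once two mirror symmetries are imposed, are constrained by the concyclicity and orthogonality conditions, and by tracking the sign of the discriminant quantity that governs the radius-squared of the third sphere across the parameter regions of \eqref{eq:regions}. Verifying that no further \M-inequivalent possibilities slip through — and that the boundary/limit cases reduce to already-classified spherical or offset systems — is where the bulk of the careful case-checking lies.
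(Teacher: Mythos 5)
Your plan follows the paper's proof essentially step for step: normalize the two guaranteed symmetry M-spheres to the planes $z=0$ and $y=0$, build the canonical control-point family via the Miquel construction, obtain the trichotomy (O)/(A)/(B) from the sign of the discriminant of the spherical-condition polynomial $\sigma_1(s)$ (double root giving a point-degeneration and hence the offset case, real roots a third real M-sphere, complex roots an imaginary symmetry sphere), and read off the type-(B) singular curves by intersecting the degenerate planes with the transverse coordinate surfaces and invoking Theorem~\ref{th:bq-properties} for the $2$-oval character. One minor slip: in type (B) there are only \emph{two} real symmetry planes carrying the two focal quartics (there is no third coordinate plane with an empty quartic --- that picture belongs to case (A2)), but this does not affect the correctness of the approach.
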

\begin{proof}
We apply M\"obius transformations such that the M-spheres are just the planes $z=0$ and $y=0$.
Hence, we choose the following control points
\[p_0=0,\ p_1=\ii,\ p_2=\jj,\ p_4 = \ii+\kk,\]
and the initial tangent vectors are $v_1=\ii$, $v_2 = \jj$, $v_3=\kk$ as usual. 
The point $p_4$ is a generic choice for the following reason: a coordinate M-circle, passing through the origin and orthogonal to the plane $z=0$, is either the $z$-axis or a circle on the plane $y=0$ with center on the $x$-axis. 
The first case coincides with the construction in Section~\ref{sec:3P} up to scaling, and we disregard this. In the second case, the circle has to cross the diagonal on the plane $y=0$. Hence, the choice of $p_4$ up to scaling.

In the second step of the DC cube construction, the points $p_3$, $p_5$, and $p_6$ depend on parameters $a,b,c$. We are free to choose any change of variables to get simpler symbolic results.
Indeed, we apply the change 
\[(a,b,c)\mapsto \left(\frac{1-c}{2},-2(a+c),\frac{2(1+b)}{3}\right)\] 
and obtain the homogeneous representation
\begin{align*}
\label{eq:2Pcontrols} 
\begin{pmatrix}
u_i\\
w_i
\end{pmatrix}_{i=0,\ldots,5}
&=
\begin{pmatrix}
 0 &  \ii & \jj & \ii+\jj & 2\kk & \ii + \kk  \\ 
 1 &  1 & 1 & 1-c\kk & 1+\jj &  1+(2a+2c+1)\jj
\end{pmatrix},\\
\begin{pmatrix}
u_6 & u_7 \\
w_6 & w_7
\end{pmatrix}
&=
\begin{pmatrix}
-1 + \jj + 2\kk & -2(a+b)+\ii+\jj+\kk\\ 
1+2b\ii+\jj     & 1+(2b-c)\ii+(2a+2c+1)\jj-c\kk
\end{pmatrix}.
\end{align*}
The quadratic polynomials for spherical patch detection are simply
\begin{equation*}
\sigma_1(s)=cs^2+2as+1, \quad \sigma_2(t)=-2t(b-c), \quad \sigma_3(u)=-2u(a+b+c).
\end{equation*}
The $u$-surfaces at the roots $u=0$ and $u=\infty$ of $\sigma_3$ degenerate to plane $z=0$. Similarly, the $t$-surfaces at the roots $t=0$ and $t=\infty$ of $\sigma_2$ degenerate to the plane $y=0$. 
The different cases of the theorem are obtained from the roots of $\sigma_1$ being two real, double, or two complex roots. The discriminant of $\sigma_1$ is $\Delta=a^2-c$, which defines a parabolic cylinder in the $(a,b,c)$-space.

If $\sigma_1$ has two distinct real roots, then the $s$-surfaces at the roots degenerate to one M-sphere. The DC system then has three M-spheres in different directions. This belongs to the case (A).

Assume that $\sigma_1$ has a double root, which is necessarily $s_0=-1/a$ since $c=a^2$. Then $F(s_0,t,u)=\ii/a$ for all $t,u$, which means that the $s_0$-surface degenerates to one point. This belongs to the offset case (O) by Lemma~\ref{lem:offset-condition}.

Assume now that $\sigma_1$ has complex non-real roots ($\Delta<0$). The $s$-surfaces at the roots $s=(-a\pm\sqrt{\Delta})/c$ degenerate to the imaginary double sphere
\begin{equation}
\label{eq:sphere-degen}
S: \left((x+a/c)^2+y^2+z^2 - \Delta/c^2\right)^2=0.    
\end{equation}
It is straightforward to check on the implicit equations that each surface of the DC system is preserved by inversion w.r.t. 
this imaginary sphere: 
\[\mathrm{Inv}_S(p)=-a/c\,\ii - \Delta/c^2\, (p+a/c\,\ii)^{-1}, \quad p = x\ii+y\jj+z\kk.\]
By intersecting the $t$-surfaces and $u$-surfaces with the planes $z=0$ and $y=0$, we obtain the two non-symmetric bicircular quartics as the singularities of the system:
\begin{align}
BQ_1':\; & cb(a+c)(x^2+y^2)^2-c(a-b+c)x(x^2+y^2) \nonumber\\
        & -(a^2+ab+ac+2bc+c)x^2 + (ab+ac+b^2+c^2)y^2\nonumber\\
        &-(a+b-c)x+a+b=0,\label{eq:B1p}\\
BQ_2':\; &c(4ab+4bc+c)(x^2+z^2)^2+4c(b-c)x(x^2+z^2)\nonumber\\
        &-2(2ab+2ac+4bc+c)x^2-2(2ab+2ac+2b^2+2c^2+c)z^2\nonumber\\
        &-4(b-c)x+4a+4b+1=0.\label {eq:B2p}
\end{align}
The symmetry with respect to the imaginary sphere on the DC system induces the symmetry of the curves $BQ_1'$ and $BQ_2'$ w.r.t. to the imaginary circles $S\cap \{z=0\}$ and $S\cap \{y=0\}$ respectively.  
This property implies, by Theorem \ref{th:bq-properties}, that both $BQ_1$ and $BQ_2$ are 2-oval bicircular quartics.

Let us consider next all possible degenerate cases. From the expression of the spherical conditions $\sigma_i=0$, $i=1,2,3$, our construction covers spherical DC systems when $b-c=0$ or $a+b+c=0$.
Next, if $c=0$, then $\sigma_1(s)$ is linear, and the root gives a spherical degeneration in $s$-direction. This belongs to the case $(A)$. 
Lastly, the bicircuar quartics might be singular. The singularity condition for $BQ_1'$ (resp. $BQ_2'$) is obtained by eliminating its variables $x,z$ (resp. $x,y$) from the equations defined by the partial derivatives. The found condition results to the same equation 
\[
(4a+4c+1)(4ab+4b^2+c)(b-c)^2(a+b+c)^2\Delta^2=0.
\]
This equation is unsatisfied inside the cylinder ($c>a^2$), except in the spherical DC system cases. Note further that $BQ_2'$ may degenerate to a smooth bicircular cubic if $4ab+4bc+c=0$, but no further degeneration to conics because the cubic coefficient $c(b-c)$ being zero leads to $4(a+c)c+c=0$, which cannot be satisfied inside the cylinder. However, this cubic case is just \M equivalent to two a quartic case of the curves. The focality between $BQ_1'$ and $BQ_2'$ follows from Lemma \ref{lem:type-B}. They are non-empty because they are images of real cylinders in the parameter space $(\R P^1)^3$ under the QB parametrization $F(s,t,u)$.
\end{proof}

The following lemma clarifies the situation on the canonical form of DC systems belonging to the type $(B)$. The two planes of symmetry are assumed to be $z=0$ and $y=0$ and with the prescribed symmetric 2-oval non-empty bicircular quartics ${\cal B}_1^{+}$ and ${\cal B}_2^{+}$ defined in \eqref{eq:B_1} and \eqref{eq:B_2} on those planes.

\begin{lemma}
\label{lem:type-B}
There is a unique DC system, with the singular locus ${\cal B}_1^{+}\cup {\cal B}_2^{+}$ defined by $a=k^2$ and $c=-m^2$ in \eqref{eq:foc-param}, 
that is symmetric with respect to the planes $z=0$, $y=0$, and the unit imaginary sphere $S^-: x^2+y^2+z^2+1=0$.
The corresponding DC cube is defined by the homogeneous control points
\begin{align}
\label{eq:2Pcontrols-1} 
\begin{pmatrix}
0 & \ii & 2k\jj & (k^2-1)\jj & -2m\kk & -(m^2-1)\kk & h_0 & -h_1 \\ 
1 & 0 & (k^2-1)\kk &  -2k\kk & (m^2-1)\jj & -2m\jj & h_1\ii & h_0\ii
\end{pmatrix}
\end{align}
where    
\[
h_0=2(k-m)(km+1), \quad
h_1=-\frac{((m+k)^2-(km-1)^2)(k-m)(km+1)}{(m+k)(km-1)}.
\]
\end{lemma}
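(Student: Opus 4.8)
The plan is to build the system directly from its symmetries and prescribed singular curves, and then to read off \eqref{eq:2Pcontrols-1}. After a \M normalization we may take the symmetry planes to be $z=0$ and $y=0$ and the imaginary symmetry sphere to be $S^-$; the associated inversion acts on imaginary quaternions by $\inv_{S^-}(p)=-p/|p|^2$, and on the common axis $\{y=z=0\}$ by $t\ii\mapsto -t^{-1}\ii$, an involution with no real fixed point. Prescribing the singular locus to be ${\cal B}_1^{+}\cup{\cal B}_2^{+}$ with $a=k^2$, $c=-m^2$ in \eqref{eq:foc-param} fixes the focal points \eqref{eq:foc-plus}, namely $\pm k\ii,\pm k^{-1}\ii$ and $\pm m\ii,\pm m^{-1}\ii$, all lying on this axis. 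I would first observe that $\inv_{S^-}$ must realize the $s$-direction end-swap $p_{0jk}\leftrightarrow p_{1jk}$, pairing the eight corners as $(p_0,p_1),(p_2,p_3),(p_4,p_5),(p_6,p_7)$; in particular the $\inv_{S^-}$-conjugate poles on the axis give $p_0=0$, $p_1=\infty$, and then $p_3=\inv_{S^-}(p_2)$, $p_5=\inv_{S^-}(p_4)$.

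Next I would fix the three faces incident to $p_0$. The degenerate $u$- and $t$-faces lie in the symmetry planes $z=0$ and $y=0$ and, intersected with the transverse coordinate surfaces, must produce the prescribed quartics ${\cal B}_1^{+}$ and ${\cal B}_2^{+}$; imposing this matching fixes the axis positions of $p_2$ and $p_4$ in terms of $k$ and $m$, while orthogonality and the plane symmetries let us take the tangent frame at $p_0$ to be $\ii,\jj,\kk$. Applying Theorem~\ref{th:Du-control-points} to each face then yields the weights, i.e. the homogeneous control points $(u_i,w_i)$, $i=0,\dots,5$, in \eqref{eq:2Pcontrols-1}, the entries $2k,k^2-1$ and $2m,m^2-1$ being exactly what the focal data produce. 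By Lemma~\ref{lem:MiquelFarin} the cube is then uniquely determined, its eighth corner being the Miquel point of the triangle on the remaining vertices after applying $\inv_0^1$. Evaluating Lemma~\ref{lem:miquel} and normalizing the weight as in that lemma gives $(u_7,w_7)=(-h_1,h_0\ii)$ with the stated $h_0,h_1$ and its partner $(u_6,w_6)=(h_0,h_1\ii)$; this is the sole place where the rational expression for $h_1$ enters.

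It then remains to confirm that \eqref{eq:2Pcontrols-1} has the advertised properties. I would verify: (i) the three partials of $F=UW^{-1}$ are mutually orthogonal and $\mathrm{Jac}(F)\not\equiv0$, so $F$ is a genuine DC system; (ii) the detectors $\sigma_1,\sigma_2,\sigma_3$ of Remark~\ref{rem:DCSphereCond} vanish at $t,u\in\{0,\infty\}$, giving the planes $y=0$ and $z=0$, while $\sigma_1$ has a complex-conjugate pair of roots giving the degeneration onto $S^-$, so the three symmetries, and that $S^-$ is the imaginary symmetry sphere, hold; (iii) intersecting the generic $t$- and $u$-surfaces (computed implicitly via Theorem~\ref{th:implicit}) with $z=0$ and $y=0$ and using Lemma~\ref{lem:DC-systems} reproduces ${\cal B}_1^{+}$ and ${\cal B}_2^{+}$ with coefficients $K,M,N$ matching \eqref{eq:foc-param} at $a=k^2$, $c=-m^2$; the relation \eqref{eq:foc-rel} is then automatic, and by Theorem~\ref{th:bq-properties} both curves are non-empty two-oval quartics on the admissible range $k^2>m^2>k^{-2}$.

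Uniqueness follows because the three symmetries and the fixed singular locus force the corner positions and the tangent frame, after which Lemma~\ref{lem:MiquelFarin} leaves no freedom beyond the interior reparametrization and overall scaling of Remark~\ref{rem:interior-rep}, neither of which changes the DC system. The main obstacle I anticipate is the non-real symmetry: unlike the three-plane case of Theorem~\ref{th:3P}, the symmetry that closes the Miquel construction here is inversion in the imaginary sphere $S^-$, so the corner data must be arranged so that the patches on the two vertical faces are compatible across $S^-$ rather than across a real plane. Tracking this consistency while keeping $p_7$ in the clean closed form $h_0,h_1$ is the delicate part, together with checking that $(m+k)(km-1)\neq0$ on the admissible range so that $h_1$ stays finite and the construction does not degenerate.
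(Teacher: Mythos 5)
Your overall strategy (normalize the two planes and $S^-$, put 2-polar systems on the symmetry planes with poles at the focal points paired by $\inv_{S^-}$, read off $(u_i,w_i)$ for $i\le 5$ from Theorem~\ref{th:Du-control-points}, then close the cube) matches the paper's construction up to the point where the last two control points are determined. There, however, your argument has a genuine gap. You claim that after fixing the two faces in the symmetry planes, Lemma~\ref{lem:MiquelFarin} ``uniquely determines'' the cube and that evaluating the Miquel point is ``the sole place where the rational expression for $h_1$ enters.'' Lemma~\ref{lem:MiquelFarin} requires \emph{three} compatible faces incident with $p_0$; you have only fixed two. The third face has corners $p_0,p_2,p_4,p_6$, all on the $x$-axis, and its fourth corner $p_6=\lambda\ii$ is a genuinely free parameter: neither the two symmetry planes, nor the $S^-$-symmetry (which merely forces $p_7=\inv_{S^-}(p_6)$ for \emph{every} $\lambda$), nor the Miquel construction (which outputs $p_7$ only after $p_6$ is given) pins it down. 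So your construction actually produces a one-parameter family of DC systems, all symmetric with respect to $z=0$, $y=0$ and $S^-$, with singular loci that are focal $2$-oval bicircular quartics in varying, generally non-canonical positions.

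The missing step — and the actual source of both $h_1$ and the uniqueness assertion — is the normalization of the singular curve. In the paper, the singular quartic on $z=0$ is computed with $\lambda$ left symbolic; it is a bicircular quartic whose coefficient of $x^3$ (proportional to that of $x$) depends linearly on $\lambda$, and the unique value $\lambda=h_0/h_1$ kills these odd terms and makes the curve coincide with the prescribed canonical form ${\cal B}_1^{+}$ (and simultaneously makes the curve on $y=0$ equal to ${\cal B}_2^{+}$). Your step (iii) does mention matching the singular curves to ${\cal B}_1^{+}$ and ${\cal B}_2^{+}$, but you present it as a verification of an already-determined cube rather than as the equation that determines the last free parameter; as written, the uniqueness claim of the lemma is therefore not established. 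The remaining parts of your plan (the pairing of corners under $\inv_{S^-}$, the explicit formulas for $p_2,p_3,p_4,p_5$ from the focal points, and the checks in (i)--(ii), including non-vanishing of $(m+k)(km-1)$) are consistent with the paper.
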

\begin{proof}
We build the DC cube's control points based on 2-polar systems on the planes $z=0$ and $y=0$, with the poles symmetric with respect to the unit imaginary sphere $S^-$. 
Since the $x$-axis is already a coordinate line of the cube, we choose the first two control points as $p_0=0$ and $p_1=\infty$, and with the usual frame $\ii, \jj,\kk$ at $p_0$. Hence, the first two homogeneous control points are $(0,1)$ and $(\ii, 0)$.
Next, the two symmetric poles of one bipolar system on the plane $z=0$ are $f_1=k\ii$ and $f_2=-k^{-1}\ii$. The point $p_2$ is on the unique circle through $p_0$ and contained in the pencil of circles ${\cal C}_{12}$ defined by the two circles of zero radii at $f_1$ and $f_2$. We choose $p_2$ as the intersection of this unique circle and the $x$-axis. The point $p_3$ is on a circle through $p_0,p_1,p_2$, so it is again on the $x$-axis. Since $p_1=\infty$, the coordinate line through $p_1$ and $p_3$ must be a straight line. There is a unique line among the pencil ${\cal C}_{12}$, namely the line through the mid-point between the poles $f_1$ and $f_2$. Hence $p_3=(f_1+f_2)/2$. The points $p_2$ and $p_3$ have the expressions
\begin{equation}
p_2=-\frac{2k}{k^2-1}\ii,\quad p_3=\frac{k^2-1}{2k}\ii.
\end{equation}
The weight $w_2$ is proportional to $\kk$, but the real proportion leads to a reparametrization. Hence, we can assume that $w_2=(k^2-1)\kk$. The homogeneous control points of the face 
defined by $p_0, p_1, p_2, p_3$ 
can be identified with the formula \eqref{inf-weights} by applying the unit inversion $\inv_0^1$. From this, we have $w_3=-2k\kk$. 
The homogeneous control points $(p_iw_i,w_i)$, $i=4,5$ are constructed similarly on the plane $y=0$ with the different parameter $m$, giving
\begin{equation}
p_4=-\frac{2m}{m^2-1}\ii,\quad w_4 =(m^2-1)\jj, \quad p_5=\frac{m^2-1}{2m}\ii, \quad w_5=-2m\jj.
\end{equation}
The point $p_6$ is constructed on the M-circle through $p_0, p_2, p_4$. Hence $p_6$ is on the same $x$-axis, $p_6=\lambda\,\ii$ for some $\lambda \in \R$. The weight $w_6$ is computed using the formula \eqref{fin-weights} and adjustment of real multipliers preserving $w_2$ and $w_4$. This gives $w_6=-h_0\ii/\lambda$. 
Note that the first six control points are pairwise symmetric with respect to $S^-$; hence the last two control points $(p_iw_i,w_i)$, $i=6,7$ have the same symmetric property. Hence $(p_7w_7,w_7)=(h_0/\lambda,h_0\ii)$.

The obtained DC system is symmetric with respect to $S^-$. Indeed, we consecutively apply the following transformations to the homogeneous control points: interior reparametrization with factor $(-1,1,1)$, interchange the pairwise symmetric control points, and apply $\inv_{S^-}$ which interchanges $u_i$ and $w_i$.  The resulting homogeneous control points differ from the initial ones by the factor $\ii$. The fraction division $\pi$ cancels the latter factor, giving the same DC system.

The singular curve on the plane $z=0$ is a bicircular quartic, which is not in the symmetric form yet. However, its coefficient in $x^3$ is proportional to its coefficient in $x$, which is linear in the parameter $\lambda$. The unique value $\lambda = h_0/h_1$
gives the bicircular quartic in the symmetric form and coincides with ${\cal B}_1^{+}$. With the same $\lambda$, the singular curve of the system on the symmetry plane $y=0$ coincides with the focal curve ${\cal B}_2^{+}$.
\end{proof}

\begin{theorem}
\label{th:main2}
Non-spherical DC systems of types (O), (A), and (B) have the families of singularities with dimensions shown in brackets: 
\begin{itemize}
\item[(O)] focal conics: ellipse and hyperbola (1), or two parabolas (0);
\item[(A)] focal $1$-oval and $2$-oval bicircular quartics (2), focal ellipse and hyperbola (1), or two intersecting lines (1);  
\item[(B)] focal $2$-oval bicircular quartics (2).
\end{itemize}
Furthermore, in each of these cases, the \M class of a DC system is uniquely determined by the canonical form of its singularities.
\end{theorem}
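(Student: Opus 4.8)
The plan is to treat the two assertions of Theorem~\ref{th:main2} separately: first the dimension count of each family of singular loci, then the rigidity statement that the singular locus determines the M\"obius class. Both are handled type by type, reusing the canonical forms already produced in Theorem~\ref{th:main}, Lemma~\ref{lem:Miquel3P}, Theorem~\ref{th:3P}, Theorem~\ref{th:3P-exc}, and Lemma~\ref{lem:type-B}, together with the offset construction.

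For the dimension count I would simply read off the number of essential parameters surviving after all M\"obius normalizations in each canonical form. In type (A) the three focal bicircular quartics \eqref{eq:B1}--\eqref{eq:B3} depend on $(a,b,c)$ with the single relation $b=-(a+c)(1+ac\delta)$ coming from \eqref{eq:foc-param}, while $\delta=\pm1$ is fixed by the scaling used to set $\delta=d/abc$; hence the 1-oval ($\delta=-1$) and 2-oval ($\delta=+1$) subcases each form a 2-dimensional family. Their degenerations — $abc=0$ or $d=0$, giving focal ellipse and hyperbola, and the exceptional configuration of Theorem~\ref{th:3P-exc}, giving two intersecting lines — each lose one parameter and are 1-dimensional. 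In type (B), Lemma~\ref{lem:type-B} parametrizes the canonical focal 2-oval quartics by $(k,m)$, equivalently $(a,c)=(k^2,-m^2)$, so the family is again 2-dimensional. For type (O) the offset construction over the 2-horn cyclide leaves the single modulus $h$ (focal ellipse/hyperbola, dimension 1), while all parabolic cyclides are M\"obius-and-offset equivalent, so the focal-parabola case is rigid (dimension 0).

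For the rigidity statement I would show that the singular locus recovers, up to M\"obius transformations, exactly the data feeding the unique constructions above. First, the M-spheres of symmetry are read off the singularities: in types (A) and (B) the symmetry planes are the planes carrying the singular curves, and in type (B) the imaginary sphere is the unique M-sphere cutting those two planes in the imaginary M-circles of symmetry of the two quartics furnished by Theorem~\ref{th:bq-properties}. Second, each singular curve is brought to its canonical symmetric form \eqref{eq:bq-canonic} by Theorem~\ref{th:bq-properties}, whose coefficients $(K,M,N)$ — tied by the focal relation \eqref{eq:foc-rel} — are M\"obius invariants that determine the parameters $(a,b,c)$ through \eqref{eq:foc-param}. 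Third, with the symmetry structure and these parameters fixed, the Miquel-point construction of Lemma~\ref{lem:Miquel3P} (type A), of Lemma~\ref{lem:type-B} (type B), and the offset construction (type O) each yield a \emph{unique} DC system; hence two systems of the same type with M\"obius-equivalent singular loci are themselves M\"obius equivalent, and conversely the invariants $(a,c)$ (respectively $h$) separate the classes.

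The main obstacle will be the final injectivity: verifying that the singular locus carries no less information than the parameters driving the unique construction, so that distinct M\"obius classes cannot share a canonical singular locus. This requires checking that the intersection (focal) points of the curves and their symmetry M-circles pin down precisely the coefficients entering \eqref{eq:foc-param}, and re-running the reconstruction on the degenerate strata — the conic cases where a quartic drops to an ellipse or hyperbola, and the line case of Theorem~\ref{th:3P-exc} — with the reduced symmetry data. One must also keep in mind, as in Remark~\ref{rem:twice-cover-3P}, that the focal ellipse/hyperbola of type (A) and of type (O) coincide as curves yet belong to different M\"obius classes; accordingly the uniqueness is asserted only within a fixed type, the types themselves being separated by their symmetry M-spheres as in Theorem~\ref{th:main}.
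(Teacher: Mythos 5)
Your dimension count is fine and matches what the paper reads off from its canonical forms. The gap is in the rigidity half. Your third step asserts that, once the symmetry structure and the coefficients $(K,M,N)$ of the canonical singular curves are fixed, the Miquel-point construction of Lemma~\ref{lem:Miquel3P}, the construction of Lemma~\ref{lem:type-B}, and the offset construction ``each yield a \emph{unique} DC system.'' That is false at the level of the construction: the passage from the curve coefficients back to the construction parameters via \eqref{eq:foc-param} is multivalued (e.g.\ $K=-(c\delta+c^{-1})/2$ is unchanged under $c\mapsto 1/(c\delta)$), so several distinct parameter choices produce DC cubes with \emph{identical} singular loci. The paper makes this explicit: in case (A3) there are exactly three cubes $D_{a,b,c}$ with $p_0=0$ realizing the same focal ellipse, namely $(a,b,c)=(0,T,-1),\ (T/(T-1),0,-1),\ (T/(T-1),T,0)$; in case (B) there are four cubes $D_{k,m}$ with parameters $(k,m)$, $(k^{-1},m)$, $(k,m^{-1})$, $(k^{-1},m^{-1})$. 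Your proposal flags ``final injectivity'' as the main obstacle but supplies no mechanism to resolve this multiplicity, and without one the argument does not close.

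The paper's resolution is a degree count: it observes that the number of distinct DC cubes sharing the given singular locus and the same base point $p_0$ equals $\deg F=\#F^{-1}(p_0)$ as computed in Section~\ref{sec:degrees} ($3$ for the EH case of type (A), $4$ for type (B)), so the apparently different cubes are exactly the reparametrizations of one and the same DC system arising from the different preimages of $p_0$. This is the key lemma-level input your proposal is missing. Your final caveat about EH singularities occurring in both types (O) and (A) is a correct and necessary observation (cf.\ Remark~\ref{rem:twice-cover-3P}), but it addresses separation of types rather than the within-type uniqueness where the real work lies.
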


\begin{proof}
The list of singularities in cases (O), (A), and (B) follow directly 
from Sections~\ref{sec:offsets}, 6, and Lemma~\ref{lem:type-B}, respectively.
It remains to prove the uniqueness of the constructed DC system
with a given singularity set. We will consider only two cases (A3) and (B) 
in details. Others can be examined similarly.

In the case (A3) (see Theorem~\ref{th:3P}) with focal ellipse and hyperbola, the corresponding DC cube $D_{a,b,c}$ depending on three parameters $(a,b,c)$ was presented in Lemma~\ref{lem:Miquel3P}. Its singularities 
$BQ_1 \cup BQ_3 \cup BQ_3$ have equations \eqref{eq:BQs-1}--\eqref{eq:BQs-3}, which will be conics if $abc = 0$. 
It appears that $BQ_1$ is an ellipse with focal points on $x$-axis 
which can be obtained exactly with three DC cubes $D_{a,b,c}$ 
with $p_0 = 0$, where 
$(a,b,c) = (0,T,-1), (T/(T-1),0,-1), (T/(T-1),T,0)$, $0 < T < 1$.

In the case (B) (see Lemma~\ref{lem:type-B}), the corresponding DC cube $D_{k,m}$ with the singularities containing 2-oval bicircular quartic 
${\cal B}_1^{+}$ is unique up to the choice of 2-polar systems on the symmetry planes. For example, one can choose the other poles $f'_1=k^{-1}\ii$ and $f'_2=-k\ii$ on the plane $z=0$, and similarly, there are two possibilities on the plane $y=0$. 
There will be four different DC cubes $D_{k,m}$ having $p_0=0$ with parameters $(k,m)$, $(k^{-1},m)$, $(k,m^{-1})$, $(k^{-1},m^{-1})$ 
having the same singularities.  

In both cases (A3) and (B), the numbers of the different DC cubes starting in
the same point $p_0$ correspond exactly to the number of preimages of $p_0$ as explained in Section~\ref{sec:degrees}. Hence, they are just reparametrizations of the same DC system.
\end{proof}

\section{Degrees of DC systems}
\label{sec:degrees}

This short section is devoted to degrees of DC systems that will have
important applications in the paper \cite{ErJeKr2024} about Dupin cyclide
sections.

We define the degree of a DC system $F : (\R P^1)^3 \to \widehat{\R}^3$ 
as a number of points in the preimage of a regular point 
\[
\deg F = \# F^{-1}(p), \quad p \in \widehat{\R}^3 \setminus\sing(F).
\]
This definition does not depend on the choice of the point $p$, as follows
from the lemma below.

\begin{lemma}\label{lem:degree}
For any regular points $p,q \in \widehat{\R}^3$ of a DC system $F$, the
number of preimage points at $p$ and that at $q$ are the same: $\# F^{-1}(p) = \# F^{-1}(q)$.
\end{lemma}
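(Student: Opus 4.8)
The plan is to prove that the number of preimage points is locally constant on the set of regular values, and then to invoke connectedness of the regular locus in $\widehat{\R}^3 \cong S^3$. Since $F : (\R P^1)^3 \to \widehat{\R}^3$ is a smooth map between compact manifolds of the same dimension $3$, and a point $p$ is a regular value precisely when every point of $F^{-1}(p)$ is a point where $\mathrm{Jac}(F) \neq 0$, the standard stack-of-records argument applies. First I would observe that $F^{-1}(p)$ is a finite set for any regular value $p$: it is a closed subset of the compact domain $(\R P^1)^3$ consisting of isolated points, since at each preimage $x$ the Jacobian is nonzero so $F$ is a local diffeomorphism near $x$ by the inverse function theorem.

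\smallskip

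Next I would establish local constancy of $\# F^{-1}(p)$ on the regular locus. Fix a regular value $p$ with $F^{-1}(p) = \{x_1,\ldots,x_n\}$. By the inverse function theorem, choose disjoint open neighbourhoods $V_1,\ldots,V_n$ of the $x_i$ on which $F$ restricts to a diffeomorphism onto open sets. The key point is that $F\big((\R P^1)^3 \setminus \bigcup_i V_i\big)$ is compact (continuous image of a compact set) and does not contain $p$; hence its complement is an open neighbourhood $U$ of $p$. Shrinking $U$ so that $U \subseteq \bigcap_i F(V_i)$, every $q \in U$ has exactly one preimage in each $V_i$ and none outside, so $\# F^{-1}(q) = n$ for all $q \in U$. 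This shows the function $q \mapsto \# F^{-1}(q)$ is locally constant on the open set $R$ of regular values.

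\smallskip

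Finally I would argue connectedness of the regular locus to conclude that the locally constant function is globally constant. The singular locus $\sing(F)$ is, by Lemma~\ref{lem:DC-systems}, a union of images of the sets where the partial derivatives vanish; these are contained in the coordinate surfaces and, in all the classified cases, form a finite union of algebraic curves (bicircular quartics, conics, or lines) in $\widehat{\R}^3 = S^3$. A finite union of curves has codimension $2$ in the $3$-manifold $S^3$, so its complement $R = S^3 \setminus \sing(F)$ is connected (removing a codimension-$2$ subset from a connected manifold leaves it connected). Since $q \mapsto \# F^{-1}(q)$ is locally constant on the connected set $R$, it is constant there, giving $\# F^{-1}(p) = \# F^{-1}(q)$ for all regular $p,q$.

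\smallskip

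I expect the main obstacle to be the codimension claim for $\sing(F)$: to guarantee that $R$ is connected, one must know that the singular locus is genuinely a union of curves (and isolated points), rather than containing a $2$-dimensional piece. Here I would rely on the structural description already developed — namely that $\sing_i(F)$ lies in a coordinate surface and is cut out as the image of the one-dimensional locus $\{\partial F = 0\}$ inside $(\R P^1)^3$ — so that each component of $\sing(F)$ has dimension at most $1$. An alternative, cleaner route that sidesteps delicate connectedness issues would be to define the $\mathbb{Z}/2$ mapping degree of $F$ (the mod-$2$ count of preimages of a regular value), which is a homotopy invariant and automatically independent of the chosen regular value; but since we want the exact count and not merely its parity, the local-constancy-plus-connectedness argument is the natural one to carry out.
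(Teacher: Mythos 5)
Your proposal is correct and follows essentially the same route as the paper: a stack-of-records/local-constancy argument via the inverse function theorem, combined with connectedness of $\widehat{\R}^3 \setminus \sing(F)$ because the singular locus has codimension $2$. You spell out the compactness step and the codimension claim in more detail than the paper does, but the underlying argument is identical.
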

\begin{proof}
$F$ is a smooth map between two compact 3-dimensional differential manifolds.
By the inverse function theorem, for every regular value 
$p \in \widehat{\R}^3$ there is an open
neighborhood $U(p)$, such that its preimage $F^{-1}(U(p))$ 
is the disjoint union of a finite number of subsets $V_i$, $i=1,\ldots,n$,
where the restriction of $F$ on each $V_i$ is a diffeomorphism to $U(p)$. 
Consider the equivalence relation 
\[
p \sim q \ \Leftrightarrow \ \# F^{-1}(p) = \# F^{-1}(q)
\]
on the set of regular points $\widehat{\R}^3 \setminus \sing(F)$.
The latter set is connected since $\sing(F)$ is of codimension 2.
The corresponding equivalency classes are open disjoint sets.
Therefore, there is only one class, i.e., all preimages have the same number
of points.
\end{proof}

\begin{theorem}
DC systems $F$ of types (O), (A), and (B) depending on $\sing F$ 
have the following degrees:
\begin{table}[h!]
\begin{center}
\begin{tabular}{|l|cc|ccc|c|}
    \hline
Type & \multicolumn{2}{c|}{O} & \multicolumn{3}{c|}{A} & B\\
    \hline
$\sing F$ & EH & 2P & BQ$^{\pm}$  & EH  & 2L & BQ$^{+}$ \\   
 \hline
$\deg F$  & 4 & 3 & 4 & 3 & 2 & 4 \\
    \hline
\end{tabular}
\end{center}
\label{tab:multicol}
\end{table}

\noindent
Notations: 
{\rm BQ}$^\pm$ - focal bicircular quartics ($2$ and $1$-oval),
{\rm EH} - focal ellipse and hyperbola, 
{\rm 2P} - focal parabolas, 
{\rm 2L} - two intersecting lines.
\end{theorem}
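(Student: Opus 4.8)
The plan is to compute the degree $\deg F = \# F^{-1}(p)$ for a generic regular point $p$ in each case, using the explicit quaternionic--\B parametrizations already constructed. Since Lemma~\ref{lem:degree} guarantees independence of the chosen regular point $p$, I am free to pick a convenient $p$ and count solutions $(s,t,u) \in (\R P^1)^3$ of $F(s,t,u) = p$. The most direct route is to exploit the factorization of the implicit equations: for a system of type (A) the surfaces are cut out by bilinear DC patches whose implicit equation, by Theorem~\ref{th:implicit}, is the $4\times 4$ determinant \eqref{implicit}. Writing $F = UW^{-1}$ with $U,W$ the trilinear quaternionic numerators and denominators from \eqref{eq:3P-QB}, \eqref{eq:2Pcontrols}, and the offset formulas of Section~\ref{sec:offsets}, the preimage condition $F(s,t,u)=p$ becomes the single quaternionic equation $U(s,t,u) = p\,W(s,t,u)$, i.e.\ a system of four real trilinear equations in $(s,t,u)$. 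The number of solutions in $(\R P^1)^3$ is what we must count.

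First I would handle the type (A) cases with bicircular-quartic singularities, where I expect $\deg F = 4$. The key observation from Remark~\ref{rem:twice-cover-3P} is that each of the three symmetry planes is covered twice, and the involutions $(s,t,u) \mapsto$ (swapping each parameter between its value and its ``reciprocal'' fixed point) generate a group acting freely on generic fibres. Concretely, for \eqref{eq:3P-QB} one checks that the map is invariant under the sign/reciprocal involutions in each coordinate coming from the symmetry with respect to the three planes $x=0$, $y=0$, $z=0$; these involutions generate a group whose generic orbit size equals the degree. For the focal ellipse/hyperbola case (EH) of type (A), the parameter $abc = 0$ degenerates one of the reciprocal involutions, dropping the covering multiplicity from $4$ to $3$; I would verify this by counting the fixed structure of the degenerate involution on a generic fibre. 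For the two-intersecting-lines case (2L) of Theorem~\ref{th:3P-exc}, the $1$-polar parametrizations on two planes collapse yet another degree of covering, giving $\deg F = 2$.

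For the offset cases (O) I would use Lemma~\ref{lem:offset-condition}: one coordinate family degenerates to a single point, and in the chosen canonical coordinates the $u$-direction is a pencil of straight lines (the offset normals). This makes the $u$-equation linear after fixing $(s,t)$, so the degree equals twice the number of $(s,t)$ preimages on the base Dupin cyclide patch, adjusted by the point-degeneration. The quartic cyclide (EH) gives $\deg F = 4$ while the parabolic cyclide (2P) gives $\deg F = 3$, the drop reflecting the single infinite control point $p_0 = \infty$ in the parabolic case \eqref{inf-weights}, which removes one branch at infinity. For type (B) I would use the explicit control points \eqref{eq:2Pcontrols-1} of Lemma~\ref{lem:type-B}: the four distinct DC cubes $D_{k,m}$ with parameters $(k,m)$, $(k^{-1},m)$, $(k,m^{-1})$, $(k^{-1},m^{-1})$ identified at the end of the proof of Theorem~\ref{th:main2} are exactly the four preimages of the common starting point $p_0 = 0$, giving $\deg F = 4$.

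The main obstacle I anticipate is making the counting rigorous rather than merely computational: to claim a number such as $\deg F = 4$ I must confirm that for a \emph{generic} $p$ all preimages are real, distinct, and non-singular, and that no spurious solutions hide at the infinite locus $w=0$ of the projective division $\pi$. The cleanest way around this is to fix $p = p_0$ (the chosen regular base point of each canonical construction) and invoke the symmetry/reparametrization correspondence established in Theorem~\ref{th:main2}: the different DC cubes through $p_0$ obtained by the involutions of the symmetry \M-spheres are precisely the distinct preimages, and their number is exactly the orbit size of the symmetry group action on the fibre. Thus the degree equals the order of the group generated by the reparametrization involutions permitted by the singularity structure, which is $4$ when all three involutions are active (BQ$^\pm$, BQ$^+$, EH of type O), $3$ when exactly one is degenerate (EH of type A, 2P), and $2$ when two degenerate (2L). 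Verifying that these orbit sizes match the direct solution counts on one explicit example per case completes the proof.
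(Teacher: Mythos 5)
Your central unifying claim --- that ``the degree equals the order of the group generated by the reparametrization involutions, which is $4$ when all three involutions are active, $3$ when exactly one is degenerate, and $2$ when two degenerate'' --- cannot be correct as stated. A group of order $3$ contains no element of order $2$, so it cannot be generated by involutions; more to the point, if a generic fibre were a free orbit of a group generated by involutions, its cardinality would be even. The degree-$3$ entries (EH of type (A) and 2P of type (O)) therefore cannot be explained by a transitive deck-group action, and this is precisely where your argument breaks: in those cases $F$ is not a regular covering away from $\sing F$, and the fibre is not a single group orbit. The paper's mechanism is different and handles the odd degrees uniformly: choose the regular point $p$ on a symmetry plane $\Pi$; that plane is the image of exactly two coordinate surfaces $F_i(s,t)=F(s,t,u_i)$, $i=0,1$ (the two roots of the corresponding spherical condition $\sigma_3$), each of which restricts to a $2$-dimensional DC system on $\Pi$ of $1$-polar or $2$-polar type, contributing $1$ or $2$ preimages respectively; then $\deg F=\#F_0^{-1}(p)+\#F_1^{-1}(p)$, giving $2+2=4$, $1+2=3$, $1+1=2$ according to the singularity type. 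Your appeal to Remark~\ref{rem:twice-cover-3P} is the right instinct, but you use it only to set up an involution count rather than to split the fibre into the two planar slices and count each slice by its polar type.

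Two further points. For the offset case your statement that ``the degree equals twice the number of $(s,t)$ preimages on the base Dupin cyclide patch, adjusted by the point-degeneration'' is too vague to certify $4$ versus $3$; the paper instead again takes $p$ on a symmetry plane of the cyclide, where the planar section is a line plus a circle (parabolic case) or two circles (quartic case), whose offsets give a $1$-polar plus a $2$-polar, resp.\ two $2$-polar, systems. For type (B), identifying the four cubes $D_{k,m}$, $D_{k^{-1},m}$, $D_{k,m^{-1}}$, $D_{k^{-1},m^{-1}}$ with the four preimages of $p_0=0$ borrows the correspondence from the proof of Theorem~\ref{th:main2}, which in the paper is itself justified by the degree computation of Section~\ref{sec:degrees}; using it here without an independent argument is circular. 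The direct elimination route you sketch (solving $U=pW$ as four real trilinear equations) would in principle work, but you would still have to control real versus complex solutions and multiplicities for a generic $p$, which the symmetry-plane slicing avoids entirely.
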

\begin{proof}
The idea of degree counting is to choose a regular point $p$ on a symmetry plane $\Pi$ of the DC system $F$ that is parametrized by $F_i(s,t) = F(s,t,u_i)$, $i=0,1$, for two values of $u$. Each map $F_i$ defines a 2-polar (or 1-polar) system on $\Pi$ having 2 points (or 1 point) in the preimage $F_i^{-1}(p)$. Then $\deg F = \# F_1^{-1}(p) + \# F_2^{-1}(p)$. 
For example, in case of offsets of parabolic cyclide $\sing F = \mathrm{2P}$.
The symmetric planar section of the cyclide will contain a line and a circle, and their offsets will define Cartesian and classical polar systems, which are of 1-polar and 2-polar types, respectively. Hence $\deg F = 1 + 2 = 3$.     
\end{proof}
\section{Conclusions}
\label{sec:conclusions}

A natural generalization of the principal Dupin cyclide patch to a volume object,
a Dupin cyclidic (DC) cube is rationally parametrized by the fraction of 3-linear 
quaternionic polynomials.  
Actually, this construction parametrizes any triply orthogonal coordinate system
having coordinate lines circles or straight lines, which we call a DC system. 

In this paper, the full classification of such DC systems up to \M transformations 
in space, $\R^3$ is presented in the form of four big classes (here M-spheres mean
spheres or planes):
\begin{itemize}
\item[(S)] 
spherical, with a family of coordinate surfaces composed of M-spheres;
\item[(O)]
offsets, constructed as offsets of quartic and cubic Dupin cyclides;
\item[(A)]
systems with 3 M-spheres of symmetry; 
\item[(B)]
systems with 2 real M-spheres and one imaginary sphere of symmetry.
\end{itemize} 
The class (S) contains well-known classical triply orthogonal coordinate systems,
e.g., cartesian, cylindrical, conical, etc.
The class (O) was introduced in \cite{Maxwell1868} and used for separation 
of variables in the Laplace equation (see overview in \cite{SymSzeresz2022}). 
The most general classes (A) and (B) are distinguished by their singular sets, which are 
nontrivial arrangements of 1-oval and 2-oval bicircular quartic curves.
It is interesting that the same singularities appear in 19th-century books \cite{Darboux1917}
and \cite{Boecher1894} in the context of orthogonal coordinates of different kinds. 
This seems to be just the beginning of an exciting research direction.

\section*{Acknowledgements}

This work is part of a project that has received funding from the European Union’s Horizon 2020 
research and innovation programme under the Marie Skłodowska-Curie grant agreement No 860843.

\bibliographystyle{plain}
\bibliography{references_2023}
\end{document}